\newcommand{\subscript}[2]{$#1 _ #2$}
\newtheorem*{rmk}{Remark}
\newtheorem*{rmks}{Remarks}
\newtheorem{prop}{Proposition}
\newtheorem*{exams}{Examples}
\newtheorem{theorem}{Theorem}
\newtheorem{lemma}{Lemma}
\newtheorem*{defn}{Definition}
\newtheorem{cor}{Corollary}
\newtheorem*{thm*}{Theorem}
\newtheorem{conj}{Conjecture}
\DeclareMathOperator{\homm}{Hom}
\DeclareMathOperator{\diag}{diag}
\DeclareMathOperator{\rank}{rank}
\newcommand{\Sum}[2]{\displaystyle\sum_{#1}^{#2}}
\newcommand{\Z}{\mathbb{Z}}
\newcommand{\N}{\mathbb{N}}
\newcommand{\C}{\mathbb{C}}
\newcommand{\Q}{\mathbb{Q}}
\numberwithin{theorem}{section}
\numberwithin{prop}{section}
\numberwithin{cor}{section}
\numberwithin{lemma}{section}
\renewcommand{\thetheorem}{\arabic{section}.\arabic{theorem}}
\renewcommand{\thecor}{\arabic{section}.\arabic{cor}}
\title{Ax-Schanuel for $GL_n$}
\author{Georgios Papas}
\date{}
\begin{document}
\maketitle

\begin{abstract} In this paper we prove an Ax-Schanuel type result for the exponential functions of general linear groups over $\mathbb{C}$. We prove the result first for the group of upper triangular matrices and then for the group $GL_n$ of all $n\times n$ invertible matrices over $\mathbb{C}$. We also obtain Ax-Lindemann type results for these maps as a corollary, characterizing the bi-algebraic subsets of these maps.
\end{abstract}

\section{Introduction}

We study questions related to the functional transcedence of exponential functions of $n\times n$ matrices over $\C$. To be more precise, motivated by the exposition in \cite{pilafun}, we prove Ax-Schanuel and Ax-Lindemann type Theorems for the exponential function of upper triangular matrices, as well as the exponential function of general $n\times n$ matrices over $\C$. We have divided our exposition into two parts dealing with each of the cases separately.

In the most general case we will consider the exponential function $E:\mathfrak{gl}_n\rightarrow GL_n$ over $\C$. The strongest result that we achieve in this case is the following
\begin{thm*}[Two-sorted Weak Ax-Schanuel for $GL_n$] Let $U\subset \mathfrak{gl}_n$ be a bi-algebraic subvariety that contains the origin, let $X=E(U)$, and let $V\subset U$ and $Z\subset X$ be algebraic subvarieties, such that $\vec{0}\in V$ and $I_n\in Z$. If $C$ is a component of $V\cap E^{-1}(Z)$ with $\vec{0}\in C$, then, assuming that $C$ is not contained in any proper weakly special subvariety of $U$,
	\begin{center}$\dim_\C C\leq \dim_\C V+\dim_\C Z-\dim_\C X$.
\end{center}\end{thm*}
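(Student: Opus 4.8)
The plan is to deduce this ``full'' statement from the Ax--Schanuel theorem for $\mathfrak{gl}_n$ proved earlier in the paper, used in its sharp (``relative'') form, together with the formal properties of weakly special subvarieties. Two such properties will be needed: that the weakly special subvarieties of $\mathfrak{gl}_n$ are closed under taking irreducible components of intersections — so that each irreducible subvariety lies in a unique smallest weakly special subvariety of $\mathfrak{gl}_n$ — and that an irreducible component through $\vec 0$ of $N\cap U$, with $N$ weakly special in $\mathfrak{gl}_n$ and $U$ bi-algebraic, is a weakly special subvariety of $U$. The sharp form of Ax--Schanuel that I will invoke reads: if $C'$ is a component through $\vec 0$ of $V'\cap E^{-1}(Z')$ for algebraic $V'\subseteq\mathfrak{gl}_n$, $Z'\subseteq GL_n$ with $\vec 0\in V'$ and $I_n\in Z'$, and $N'$ is the smallest weakly special subvariety of $\mathfrak{gl}_n$ containing $C'$, then $\dim_\C C'\leq\dim_\C(V'\cap N')+\dim_\C(Z'\cap E(N'))-\dim_\C E(N')$.

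Granting this, the argument is short. First I would let $N$ be the smallest weakly special subvariety of $\mathfrak{gl}_n$ containing $C$ and show $U\subseteq N$: an irreducible component through $\vec 0$ of $N\cap U$ that contains $C$ is a weakly special subvariety of $U$ containing $C$, so by the hypothesis that $C$ lies in no proper weakly special subvariety of $U$ this component is all of $U$, whence $U\subseteq N$. Then the rest is bookkeeping: $V\subseteq U\subseteq N$ gives $V\cap N=V$, and $X=E(U)\subseteq E(N)$ gives $\dim_\C E(N)\geq\dim_\C X$; applying the sharp Ax--Schanuel inequality with $V'=V$, $Z'=Z$ (so $C'=C$ and $N'=N$) and using in addition $Z\cap E(N)\subseteq Z$ yields $\dim_\C C\leq\dim_\C(V\cap N)+\dim_\C(Z\cap E(N))-\dim_\C E(N)\leq\dim_\C V+\dim_\C Z-\dim_\C X$. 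The degenerate case $C=\{\vec 0\}$ needs no separate treatment: there $N=\{\vec 0\}$, hence $U=\{\vec 0\}$ and all the dimensions vanish.

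The genuine difficulty is not in this reduction but in its two ingredients. The first is the Ax--Schanuel theorem for $\mathfrak{gl}_n$ itself, whose proof I expect to rest on the Pila--Wilkie counting theorem, a monodromy argument for the graph of $E$ inside $\mathfrak{gl}_n\times GL_n$, and a functional-transcendence (differential-algebraic) input in the spirit of Ax's original result. The second, and the part most directly relevant here, is promoting that theorem from a bare codimension inequality to the sharp form stated above: this requires a good enough description of the weakly special subvarieties of $\mathfrak{gl}_n$ — essentially, that each is again an object of Lie-algebra type with its own exponential map — so that the whole Ax--Schanuel argument can be rerun with an arbitrary weakly special subvariety in place of $\mathfrak{gl}_n$. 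Within the deduction itself, the only point requiring care is the compatibility of ``weakly special in $\mathfrak{gl}_n$'' with ``weakly special in $U$'' invoked in the first step, which is precisely where bi-algebraicity of $U$ enters and which I would verify from the explicit description of weakly special subvarieties in terms of (translates of) algebraic Lie subalgebras.
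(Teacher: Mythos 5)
Your reduction defers the entire content of the theorem to a ``sharp (relative) form'' of Ax--Schanuel for $\mathfrak{gl}_n$ --- the inequality $\dim_\C C'\leq\dim_\C(V'\cap N')+\dim_\C(Z'\cap E(N'))-\dim_\C E(N')$ with $N'$ the smallest weakly special subvariety containing $C'$ --- which is nowhere proved, neither by you nor in the paper, and which is at least as strong as the statement you are asked to prove: once one grants it, the theorem is (as you say) trivial bookkeeping, since the hypothesis that $C$ lies in no proper weakly special subvariety of $U$ just says $U$ is the minimal weakly special containing $C$. You explicitly flag this promotion as ``the genuine difficulty'' and propose to handle it by rerunning a Pila--Wilkie/monodromy-style argument with a weakly special subvariety in place of $\mathfrak{gl}_n$, but no such argument is carried out, so the proposal is a reduction of the theorem to an unproven stronger theorem rather than a proof. (It also misreads the underlying method: the paper's Ax--Schanuel for $\mathfrak{gl}_n$ is not obtained by point counting; it is reduced, via the ``data'' of a matrix, to the classical differential-algebraic Ax--Schanuel in families, Theorem 2.1.)

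The missing step is exactly what the paper's proof supplies. One restricts $C$ to the locus $B$ where the eigenvalue coordinates $z_i$ and the data coordinates $g_j$ of $W_k(\vec m)$ are well-defined meromorphic functions (so $\dim_\C B=\dim_\C C$), applies the functional Ax--Schanuel in its alternate form to get $tr.d._\C\,\C(A,E(A))\geq N+\dim_\C C$ with $N=\dim_\Q\langle z_1,\ldots,z_k\rangle$, and converts $\dim_\C V\geq tr.d._\C\,\C(A)$ and $\dim_\C Z\geq tr.d._\C\,\C(E(A))$ into bounds in terms of $\{z_i,g_j\}$ and $\{e^{z_i},g_j\}$ using the transcendence-preserving passage to the data. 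The hypothesis on $C$ is then used quantitatively, not just to identify $U$ as minimal: it forces the splittings $tr.d._\C\,\C(\{z_i,g_j\})=M+tr.d._L L(\{z_i\})$, $tr.d._\C\,\C(\{e^{z_i},g_j\})=M+tr.d._L L(\{e^{z_i}\})$ and $tr.d._\C\,\C(A,E(A))=M+tr.d._L L(\{z_i,e^{z_i}\})$ over $L=\C(\{g_j\})$, $M=tr.d._\C L$, together with $\dim_\C X=N+M$ coming from the product structure $T_\Sigma\times W$ of weakly special sets; the subadditivity $tr.d._L L(\{z_i,e^{z_i}\})\leq tr.d._L L(\{z_i\})+tr.d._L L(\{e^{z_i}\})$ then closes the inequality. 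None of this transcendence-degree bookkeeping, which is where the structure of weakly special subvarieties actually enters, appears in your proposal, so as it stands the argument has a genuine gap.
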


Here the term \textbf{component} of a subset $R\subset \mathfrak{gl}_n$ refers to a complex-analytically irreducible component of $R$, while the term \textbf{bi-algebraic} refers to a subvariety $U$ of $\mathfrak{gl}_n$ whose image $E(U)$ under the exponential is such that its Zariski closure $Zcl(E(U))$ satisfies $\dim_{\C}(E(U))=\dim_{\C}(Zcl(E(U)))$. The \textbf{ weakly-special} subvarities will be defined later on and are characterized, as we will see, by an Ax-Lindemann-type statement. In that sense, they are naturally defined for the exponential map of matrices, mirroring the definition of weakly special subvarities for other transcendental maps, for more on those we refer to \cite{pilaAO}.

\begin{rmks} 1. The restrictions on $V$, $C$, and $Z$, requiring that $\vec{0}\in C$, $\vec{0}\in V$, and $I_n\in Z$ are needed to deal with the existence of positive dimensional connected components in the preimage of $I_n$. A phenomenon that does not appear in other transcendental maps considered so far in the literature, at least to the knowledge of the author. For example, in $\mathfrak{h}_2$ all matrices of the form $\begin{pmatrix}
		2k\pi i && x\\
		0 && 2l\pi i
	\end{pmatrix}$ with $k\neq l$ integers are mapped to the identity matrix $I_2$ via the exponential. We return to this in \ref{applications}.\\

2. It is worth noting that, in contrast to \cite{kirby2009theory} and \cite{ax1971schanuel}, our target space, the group $GL_n$, is no longer a commutative group and that the exponential map is no longer a group homomorphism. We are nevertheless able to extract ``functional equations" satisfied by our map, those will be reflected in the weakly special subvarieties.
\end{rmks}

\subsection*{A short review of Ax-Schanuel and Ax-Lindemann}

Our main motivation is the Ax-Schanuel Theorem for the usual exponential function of complex numbers. This result, originally a conjecture of Schanuel, is due to J. Ax \cite{ax1971schanuel}. Ax's proof employs techniques of differential algebra. One of the equivalent formulations of this theorem is the following

\begin{thm*}[Ax-Schanuel] Let $y_1,\ldots,y_n\in\C[[t_1,\ldots,t_m]]$ have no constant terms. If the $y_i$ are $\Q-$linearly independent then 
	
	\begin{center}
		$tr.d._\C \C(y_1,\ldots,y_n,e^{y_1},\ldots,e^{y_n})\geq n+\rank\big(\frac{d y_{\nu}}{d t_\mu}\big)_{\underset{\nu=1,\ldots,n}{\scriptscriptstyle{\mu=1,\ldots,m}}}.$
	\end{center}
\end{thm*}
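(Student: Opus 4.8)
The plan is to follow Ax's strategy: reformulate inside a differential field, identify the rank term with a dimension of Kähler differentials, and then carry out the genuinely hard argument.

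\emph{Step 1 (reformulation).} I would work in the differential field $K:=\mathrm{Frac}(\C[[t_1,\dots,t_m]])$ with the commuting derivations $\partial_\mu=\partial/\partial t_\mu$, whose field of constants is exactly $\C$. As each $y_i$ has zero constant term, $z_i:=\exp(y_i)=\sum_{k\ge 0}y_i^{k}/k!$ is a well-defined unit of $\C[[\mathbf t]]$ with $\partial_\mu z_i=(\partial_\mu y_i)z_i$ for all $\mu$, and no nonzero $\Q$-linear combination of the $y_i$ can be a (necessarily nonzero) constant; hence the hypothesis says exactly that $y_1,\dots,y_n$ are linearly independent over $\Q$ modulo $\C$. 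The statement to prove is $\mathrm{tr.deg}_\C\,\C(y_1,\dots,y_n,z_1,\dots,z_n)\ge n+\rank(\partial_\mu y_\nu)_{\mu,\nu}$.

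\emph{Step 2 (the rank term).} Set $L=\C(y_1,\dots,y_n,z_1,\dots,z_n)$ and work in the $L$-module $\widehat\Omega_{L/\C}$ of $\mathfrak m$-adically continuous Kähler differentials, i.e. the $L$-span of $\{dx=\sum_\mu(\partial_\mu x)\,dt_\mu:x\in L\}$ inside $\bigoplus_\mu K\,dt_\mu$. Because $dz_i=z_i\,dy_i$, this module is spanned by $dy_1,\dots,dy_n$; it is thus finite-dimensional, with $\dim_L\widehat\Omega_{L/\C}=\rank(\partial_\mu y_\nu)=:r$, so the target inequality reads $\mathrm{tr.deg}_\C L\ge n+\dim_L\widehat\Omega_{L/\C}$. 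The summand $r$ is already forced by the polynomial relations among the $y_i$ alone; the whole content lies in producing the extra $+n$ out of the $\Q$-linear independence.

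\emph{Step 3 (the heart, and the obstacle).} This last step is Ax's theorem proper; I would prove it by induction on $n$, resting on two inputs. The first is a residue computation: if $w$ is a nonzero power series with zero constant term, then $\exp(w)$ is transcendental over every finitely generated subfield of $K$ containing $w$. Indeed, enlarging such a field to contain $\exp w$, choosing a separating transcendence basis of it over $\C$ through $w$, and passing to the corresponding smooth projective algebraic curve (over the algebraic closure of the remaining basis elements, so that $w$ and $\exp w$ are rational functions on it), the identity $d(\exp w)/\exp w=dw$ would equate the logarithmic form on the left, whose residues are the integers $\mathrm{ord}_p(\exp w)$, with the residue-free exact form $dw$; so $\exp w$ has neither zeros nor poles and lies in the constant field, whence $0=\partial(\exp w)=\exp(w)\cdot\partial(w)=\exp w\ne 0$ for the derivation $\partial=\partial/\partial w$, a contradiction. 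The second input is a Kolchin–Ostrowski-type description of the algebraic relations among the units $z_i$, whose logarithmic derivatives $\partial_\mu z_i/z_i=\partial_\mu y_i$ all lie in the differential subfield generated over $\C$ by the $y_i$: any such relation forces integers $a_1,\dots,a_n$, not all zero, with $\prod_i z_i^{a_i}=\exp(\sum_i a_iy_i)$ in that subfield, and then $w:=\sum_i a_iy_i\ne 0$ by $\Q$-linear independence exhibits $\exp(w)$ inside a finitely generated field containing $w$ — contradicting the first input. The genuine obstacle is to assemble these into an induction that correctly tracks the rank $r$ in the case $\mathrm{tr.deg}_\C L<n+r$, above all when the $dy_i$ are $L$-linearly dependent, so that the relation sought is invisible to dimension counts in $\widehat\Omega_{L/\C}$ and the logarithmic (integral-residue) structure has to be used in full. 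As an alternative, closer to the rest of this paper, one could instead prove the equivalent geometric Ax–Schanuel statement for the graph of $\exp$ on $\C^n\times(\C^\times)^n$ by the Pila–Zannier method — Pila–Wilkie counting for $V\cap\exp^{-1}(Z)$ in $\mathbb R_{\mathrm{an},\exp}$ together with the relevant monodromy input — but over the torus the differential-algebraic route above is more self-contained.
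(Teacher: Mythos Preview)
The paper does not prove this statement at all: it is quoted in the introduction as Ax's 1971 theorem, with a citation, and is then used as a black box --- specifically, the $k=0$ base case in the inductive proof of \thref{asmainred} invokes it directly (``For $k=0$ this is a consequence of the Ax-Schanuel Theorem''). So there is no ``paper's own proof'' to compare your attempt to; the paper's entire architecture is to \emph{assume} classical Ax--Schanuel and build the matrix-exponential versions on top of it.

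As for your sketch on its own terms: Steps 1 and 2 are fine and standard. Step 3 is an honest outline of Ax's differential-algebraic strategy, and you correctly flag that the real work --- running the induction while tracking the rank $r$, especially when the $dy_i$ are $L$-dependent --- is not carried out. Your ``first input'' (the residue/curve argument that $\exp(w)$ is transcendental over any finitely generated field containing $w$) is morally the $n=1$ case, but the passage to a projective curve needs more care: you are treating $w$ and $\exp(w)$ as rational functions on a curve over the algebraic closure of a partial transcendence basis, which requires that the field in question be a function field of transcendence degree one over that base, and the choice of separating basis ``through $w$'' has to be justified. The Kolchin--Ostrowski input is the right lever, but as written you have essentially restated the conclusion (a multiplicative relation among the $z_i$) rather than derived it from an assumed algebraic relation lowering the transcendence degree. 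In short: the strategy is Ax's and is the right one, but what you have is a roadmap, not a proof, and you say as much yourself. For the purposes of this paper none of that matters --- the statement is taken as input, not output.
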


An immediate consequence of the above Ax-Schanuel Theorem is the characterization of all the bi-algebraic subsets of $\C^n$ with respect to the map $\pi:\C^n\rightarrow (\C^\times)^n$, given by $(z_1\ldots,z_n)\mapsto (e^{z_1},\ldots,e^{z_n})$. In other words it leads to a characterization of the subvarieties $W\subset \C^n$ with the property that $dim_\C(\pi(W))=dim_\C(Zcl(\pi(W)))$.

\begin{defn}A subvariety $W$ of $\C^n$ will be called weakly special, or geodesic, if it is defined by any number $l\in\N$  of equations of the form\begin{center}
		$\Sum{i=i}{n}q_{i,j}z_i=c_j$, $j=1,\ldots,l$,
	\end{center}where $q_{i,j}\in\Q$ and $c_j\in\C$.
\end{defn}

This characterization of bi-algebraic sets is due to the following result, dubbed Ax-Lindemann by Pila due to its resemblance to Lindemann's theorem,
\begin{thm*}[Ax-Lindemann]Let $V\subset (\C^\times)^n$ be an algebraic subvariety. Then any maximal algebraic subvariety $W\subset \pi^{-1}(V)$ is weakly special.
\end{thm*}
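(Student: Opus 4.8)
The plan is to deduce Ax-Lindemann from the power series Ax-Schanuel theorem above by a ``pincer'' argument. Throughout we take $W$ to be irreducible, which is the essential case, since each irreducible component of a maximal algebraic subvariety of $\pi^{-1}(V)$ is itself maximal among the irreducible algebraic subvarieties of $\pi^{-1}(V)$; replacing $V$ by its Zariski closure we may also assume $V$ is closed. Let $d=\dim W$, pick a smooth point $p\in W$, and choose a germ of holomorphic parametrization of $W$ at $p$ whose coordinate functions $x_1(t),\dots,x_n(t)\in\C[[t_1,\dots,t_d]]$ include among them the functions $t_1+p_{\mu_1},\dots,t_d+p_{\mu_d}$ for a suitable choice of $d$ of the coordinates. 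Put $y_i=x_i-x_i(0)$, so that the $y_i$ have no constant term, and let $r=\dim_\Q\{(q_1,\dots,q_n)\in\Q^n:\sum_i q_iy_i=0\}$; then the smallest weakly special subvariety $\langle W\rangle$ containing $W$ has codimension $r$, i.e.\ $\dim\langle W\rangle=n-r$, and $W\subseteq\langle W\rangle$ forces $d\le n-r$.

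The first step is to establish the inequality $\dim\langle W\rangle\le\dim\overline{\pi(W)}$. Choose a maximal $\Q$-linearly independent sub-collection $y_{i_1},\dots,y_{i_{n-r}}$ of $y_1,\dots,y_n$; the remaining $y_i$ are $\Q$-linear combinations of these, so $\C(y_{i_1},\dots,y_{i_{n-r}})=\C(y_1,\dots,y_n)=\C(x_1,\dots,x_n)$ has transcendence degree $d$ over $\C$, and the Jacobian $(\partial y_{i_k}/\partial t_j)$ has the same rank as the full Jacobian $(\partial x_i/\partial t_j)$, which equals $d$ since the parametrization is immersive. Applying Ax-Schanuel to $y_{i_1},\dots,y_{i_{n-r}}$ gives
\[tr.d._\C\,\C\bigl(y_{i_1},\dots,y_{i_{n-r}},e^{y_{i_1}},\dots,e^{y_{i_{n-r}}}\bigr)\ \ge\ (n-r)+d.\]
On the other hand the left-hand side is at most $tr.d._\C\,\C(y_{i_1},\dots,y_{i_{n-r}})+tr.d._\C\,\C(e^{y_{i_1}},\dots,e^{y_{i_{n-r}}})\le d+tr.d._\C\,\C(e^{x_1},\dots,e^{x_n})$, and since $(e^{x_1(t)},\dots,e^{x_n(t)})=\pi\bigl(x_1(t),\dots,x_n(t)\bigr)$ parametrizes a germ of $\pi(W)$, whose Zariski closure is $\overline{\pi(W)}$, this last transcendence degree equals $\dim\overline{\pi(W)}$. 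Comparing the two bounds yields $n-r\le\dim\overline{\pi(W)}$, as claimed.

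The second step uses maximality. Writing $\langle W\rangle=a+L_\C$ for a $\Q$-rational linear subspace $L_\C\subseteq\C^n$, one checks that $\pi(\langle W\rangle)=\pi(a)\cdot\pi(L_\C)$ is a coset of a subtorus of $(\C^\times)^n$, in particular a closed irreducible subvariety, of dimension $\dim_\C\langle W\rangle$. Since $W\subseteq\langle W\rangle$ we have $\overline{\pi(W)}\subseteq\pi(\langle W\rangle)$, hence $\dim\overline{\pi(W)}\le\dim\langle W\rangle$; combined with Step~1 this gives $\dim\overline{\pi(W)}=\dim\pi(\langle W\rangle)$, and therefore $\overline{\pi(W)}=\pi(\langle W\rangle)$ by irreducibility. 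Now $\pi(W)\subseteq V$ together with $V$ closed give $\pi(\langle W\rangle)=\overline{\pi(W)}\subseteq V$, so $\langle W\rangle\subseteq\pi^{-1}\bigl(\pi(\langle W\rangle)\bigr)\subseteq\pi^{-1}(V)$; thus $\langle W\rangle$ is an algebraic subvariety of $\pi^{-1}(V)$ containing $W$, and maximality forces $W=\langle W\rangle$, which is weakly special.

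I expect the main work to be in Step~1: passing from the power series statement of Ax-Schanuel to the geometric inequality $\dim\langle W\rangle\le\dim\overline{\pi(W)}$ requires the reduction to a $\Q$-linearly independent sub-collection of the $y_i$ without changing either the Jacobian rank or the field generated, together with the identification $tr.d._\C\,\C(e^{x_1},\dots,e^{x_n})=\dim\overline{\pi(W)}$, for which it is convenient to take the parametrization convergent so that the polynomial relations among the $e^{x_i(t)}$ are exactly the defining equations of $\overline{\pi(W)}$. Granting this, the subtorus computation and the maximality argument in Step~2 are routine.
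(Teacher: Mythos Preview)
Your argument is correct. Note, however, that the paper does not give its own proof of this classical Ax-Lindemann statement: it is quoted in the introduction with a reference to \cite{pilafun}. The paper's own Ax-Lindemann results are the analogues for $\mathfrak{h}_n$ and $\mathfrak{gl}_n$, and those are not deduced directly from the power-series Ax-Schanuel but via an intermediate ``Full Ax-Schanuel'' theorem on atypical intersections: one lets $U$ be the minimal weakly special containing $W$, sets $X=E(U)$, applies the Full Ax-Schanuel inequality with $C=W$ to obtain $\dim X\le\dim(V\cap X)$, concludes $X\subseteq V$ and hence $U\subseteq E^{-1}(V)$, and finishes by maximality. Your route bypasses this intermediate packaging: you go straight from the power-series Ax-Schanuel to the inequality $\dim\langle W\rangle\le\dim\overline{\pi(W)}$ via the subadditivity bound $tr.d._\C\,\C(y,e^y)\le tr.d._\C\,\C(y)+tr.d._\C\,\C(e^y)$, then close up with the subtorus/maximality step. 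What you gain is a shorter self-contained argument in this particular case; what the paper's formulation buys is a reusable atypical-intersection statement valid inside any weakly special ambient $U$, which is of independent interest and is what the paper actually needs for its $\mathfrak{h}_n$ and $\mathfrak{gl}_n$ settings. Underneath, the transcendence-degree bookkeeping is the same: your Step~1 is essentially the special case $U=\C^n$ of the Full Ax-Schanuel computation.
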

For more on these notions, along with a proof of Ax-Lindemann as a corollary of Ax-Schanuel, we refer to \cite{pilafun}.

Subsequent results in functional transcendence that look to achieve similar results to the above theorem for other transcendental functions have also been dubbed as ``Ax-Schanuel" and ``Ax-Lindemann" respectively. Ax-Schanuel results are known for affine abelian group varieties, due to J.Ax \cite{MR0435088}, for semiabelian varieties, due to J. Kirby \cite{kirby2009theory}, the $j-$function, due to J. Pila and J. Tsimerman \cite{pila2016ax}, for Shimura varieties, due to N. Mok, J. Pila, and J. Tsimerman \cite{mok2017ax}, for variations of Hodge structures, due to B. Bakker and J. Tsimerman \cite{bakker2017ax}, and for mixed Shimura varieties due to Z. Gao \cite{gao2018mixed}. Finally, B. Klingler, E. Ullmo, and A. Yafaev \cite{klingler2013hyperbolic} have proven an Ax-Lindemann result for any Shimura variety.

\subsection*{Summary of results in Part I}

The first part of this paper deals with the exponential of the algebra $\mathfrak{h}_n$ of $n\times n$ upper triangular matrices. This map is more accessible to computations. These computations form the technical part of the reduction from the Ax-Schanuel result in this case to the original Ax-Schanuel Theorem and are presented in \ref{tritosection}.

We let $E:\mathfrak{h}_n\rightarrow U_n$ denote the exponential of $\mathfrak{h}_n$, $U_n$ being the group of upper triangular invertible matrices over $\C$. Let $A$ be an upper triangular matrix with entries in $\C[[t_1,\ldots,t_m]]$. We will denote the field extension of $\C$ that results from adjoining to $\C$ the entries of both matrices $A$ and $E(A)$ by $\C(A,E(A))$. In this case our main result will be 

\begin{thm*}[Weak Ax-Schanuel for $U_n$] Let $f_1,\ldots,f_n,g_{i,j}\in \mathbb{C}[[t_1,\ldots,t_m]]$ be power series, where $1\leq i<j\leq n$. We assume that the $f_i$ do not have a constant term. Let $A$ be the $n\times n$ upper triangular matrix with diagonal $\vec{f}$ and the $(i,j)$ entry equal to $g_{i,j}$. Let $N=\dim_\Q\langle f_1,\ldots,f_n\rangle_{\Q}$, then   \begin{center}
		$tr.d._\mathbb{C}\mathbb{C}(A,E(A))\geq N+\rank(J(\vec{f},\vec{g};\vec{t}))$.\end{center}
\end{thm*}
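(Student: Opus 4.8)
The plan is to deduce the bound from the classical Ax--Schanuel theorem stated above, applied to the diagonal $f_1,\dots,f_n$, and then to account for the super-diagonal entries $g_{i,j}$ by a purely differential-algebraic Jacobian-rank argument. The only input from the matrix exponential that a \emph{lower} bound on $tr.deg_{\mathbb{C}}\mathbb{C}(A,E(A))$ requires is the trivial fact that $E(A)=\exp(A)$ is again upper triangular with diagonal $(e^{f_1},\dots,e^{f_n})$, since $A^{k}$ has diagonal $(f_1^{k},\dots,f_n^{k})$. Writing $u_i:=e^{f_i}$, $L_0:=\mathbb{C}(f_1,\dots,f_n,u_1,\dots,u_n)$ and $L:=L_0(\{g_{i,j}\}_{i<j})$, every $f_i$, $u_i$ and $g_{i,j}$ is an entry of $A$ or of $E(A)$, so $L\subseteq\mathbb{C}(A,E(A))$, and it suffices to prove $tr.deg_{\mathbb{C}}L\geq N+\rank J(\vec f,\vec g;\vec t)$. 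I would point out that the reverse inclusion $\mathbb{C}(A,E(A))=L$ --- obtained by expanding $E(A)_{i,j}$ as a sum over chains $i=k_0<\dots<k_r=j$ of $g_{k_0k_1}\cdots g_{k_{r-1}k_r}$ times the divided difference of $\exp$ at the nodes $f_{k_0},\dots,f_{k_r}$, which always lies in $L_0$ (even in the confluent case where some nodes coincide) --- is exactly the computational content of \ref{tritosection}; it is needed for the geometric reformulation and for Ax--Lindemann, but not for the present inequality.

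For the diagonal I would pass to a $\mathbb{Q}$-basis $f_{i_1},\dots,f_{i_N}$ of $\langle f_1,\dots,f_n\rangle_{\mathbb{Q}}$ (still power series with no constant term) and apply the classical Ax--Schanuel theorem to it, obtaining $tr.deg_{\mathbb{C}}\mathbb{C}(f_{i_1},\dots,f_{i_N},u_{i_1},\dots,u_{i_N})\geq N+\rank(\partial f_{i_\nu}/\partial t_\mu)$. Since each $f_j$ is a $\mathbb{Q}$-combination of the $f_{i_\nu}$ and each $u_j$ accordingly a root of a monomial in the $u_{i_\nu}$, the field $L_0$ is algebraic over $\mathbb{C}(f_{i_\bullet},u_{i_\bullet})$ and the rows of $J(\vec f;\vec t)$ are $\mathbb{Q}$-combinations of the rows $\partial f_{i_\nu}/\partial t_\mu$; hence $tr.deg_{\mathbb{C}}L_0\geq N+r$, where $r:=\rank J(\vec f;\vec t)$.

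The substantive step, which I expect to be the main obstacle, is to show $tr.deg_{L_0}L\geq r'-r$ with $r':=\rank J(\vec f,\vec g;\vec t)$; together with the previous paragraph and additivity of transcendence degree this gives $tr.deg_{\mathbb{C}}\mathbb{C}(A,E(A))\geq tr.deg_{\mathbb{C}}L\geq (N+r)+(r'-r)=N+r'$. To prove it I would work over $K:=\operatorname{Frac}\mathbb{C}[[t_1,\dots,t_m]]$ with the derivations $\partial_\mu=\partial/\partial t_\mu$ and differentials $dh=\sum_\mu(\partial_\mu h)\,dt_\mu$. The crucial observation is that $\partial_\mu$ carries every element of $L_0$ into the $L_0$-span of $\partial_\mu f_1,\dots,\partial_\mu f_n$: this holds on the generators because $\partial_\mu u_k=u_k\,\partial_\mu f_k$, and then on all of $L_0$ by the chain and quotient rules. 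Next pick $S\subseteq\{g_{i,j}\}_{i<j}$ with $|S|=r'-r$ so that $\dim_K\operatorname{span}_K\big(\{df_i\}_i\cup\{dg\}_{g\in S}\big)=r'$ (possible since $\{df_i\}_i$ spans a space of dimension $r$ and $\{df_i\}_i\cup\{dg_{i,j}\}$ one of dimension $r'$). If the elements of $S$ were algebraically dependent over $L_0$, take a nonzero relation $P=0$ over $L_0$ of least degree in those variables; in characteristic zero some $\partial P/\partial X_{g_0}$ is then nonzero at $(g)_{g\in S}$. Differentiating $P=0$ in each $t_\mu$ and using the crucial observation on the coefficients of $P$ yields $\sum_{g\in S}(\partial P/\partial X_g)(g_\bullet)\,dg\in\operatorname{span}_K\{df_i\}_i$ with a nonzero coefficient on $dg_0$, contradicting the choice of $S$; hence the elements of $S$ are algebraically independent over $L_0$, so $tr.deg_{L_0}L\geq|S|=r'-r$. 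This is just the easy half of the Jacobian criterion for algebraic independence, transplanted to the differential subfield generated by the $f_i$ and their exponentials, and the only delicate bookkeeping is the two-layer chain rule through $u_k=e^{f_k}$. An alternative, matching the paper's announced strategy, is to feed the diagonal estimate and the chain expansion of $E(A)_{i,j}$ into \thref{asmainred} directly; the route above, however, makes transparent where each hypothesis --- no constant terms, $\mathbb{Q}$-independence after passing to a basis, and the exponential relation --- is actually used.
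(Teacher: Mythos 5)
Your proposal is correct, and it reaches the stated inequality by a genuinely different route from the paper. The paper first develops the eigencoordinate machinery of \S 3 (the canonical basis, together with the lemmas identifying $tr.d._\C\C(A,E(A))$ with $tr.d._\C\C(\{f_i,e^{f_i},T_{i,j}(A)\})$) and then feeds the statement into the family version of Ax--Schanuel proved in \S 2, which is itself established by induction on the number of auxiliary coordinates via a geometric reformulation and the classical theorem. You bypass both ingredients: for a \emph{lower} bound only the inclusion $\C(\{f_i,g_{i,j},e^{f_i}\})\subseteq\C(A,E(A))$ is needed (the diagonal of $E(A)$ being $(e^{f_1},\ldots,e^{f_n})$), and you bound the transcendence degree of this smaller field directly --- classical Ax--Schanuel applied to a $\Q$-basis of $\langle f_1,\ldots,f_n\rangle_\Q$ handles the diagonal (with $\rank J(\vec f;\vec t)$ unchanged because the remaining rows are $\Q$-combinations and the remaining $e^{f_j}$ are algebraic over the exponentials of the basis), while the easy direction of the Jacobian criterion, i.e.\ your minimal-degree derivation argument using $du_k=u_k\,df_k$ so that $d(L_0)\subseteq\operatorname{span}_K\{df_1,\ldots,df_n\}$, shows that a suitable set of $\rank J(\vec f,\vec g;\vec t)-\rank J(\vec f;\vec t)$ of the $g_{i,j}$ stays algebraically independent over $L_0$. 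In effect your second and third paragraphs constitute an independent, purely differential-algebraic proof of the paper's ``Ax-Schanuel in families'' statement, in the sharper form with $N=\dim_\Q\langle f_1,\ldots,f_n\rangle$, avoiding the induction on $k$; all steps check out, including the exchange argument for choosing $S$ and the nonvanishing of some $\partial P/\partial X_{g_0}$ by minimality in characteristic zero. What your route buys is economy and transparency for this particular inequality: it isolates exactly what is used about $E$ (that $\exp$ of an upper triangular matrix is upper triangular with diagonal $(e^{f_i})$) and treats the $N$-version directly rather than as a corollary. What it does not provide --- as you yourself note --- is the equality of transcendence degrees through the eigencoordinates $T_{i,j}$, which the paper needs afterwards for the description of weakly special subvarieties, the full Ax--Schanuel on atypical intersections, and Ax--Lindemann; the paper's heavier route amortizes its cost over those later results.
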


Here $\langle f_1,\ldots,f_n\rangle_{\Q}$ denotes the linear span of the $f_i$ over $\Q$, while $J(\vec{f},\vec{g};\vec{t})$ denotes the $\frac{n(n+1)}{2}\times m$ Jacobian matrix with entries of the form $\partial h_s/\partial t_j$, where $h_s$, with $1\leq s\leq \frac{n(n+1)}{2}$, is an ordering of the $g_{i,j}$ and the $f_i$. The rank of the Jacobian is its rank over the fraction field of $\mathbb{C}[[t_1,\ldots,t_m]]$.

The main idea is that given a matrix $A\in\mathfrak{h}_n$ we are able to canonically choose a basis of generalized eigenvectors for it, based solely on the multiplicities of its eigenvalues. This basis is chosen in such a way that makes it computable in terms of the entries of the matrix $A$. At the same time we can determine the action of the matrix $A$ in each of its generalized eigenspaces.

The canonical basis and its properties lead us naturally to define the notion of eigencoordinates in \ref{tritosection}. These will roughly be coordinates describing the generalized eigenspaces of a matrix $A$ along with the action of the matrix $A$ in each of these spaces. Ultimately they will be used in reducing the Ax-Schanuel result to the original Ax-Schanuel Theorem. 

\subsubsection*{Weakly special subvarieties for $\mathfrak{h}_n$}

After establishing the Ax-Schanuel result we turn towards characterizing the bi-algebraic subvarieties of the exponential map $E:\mathfrak{h}_n\rightarrow U_n$ that contain the origin. In the literature for other transcendental maps such subvarieties are referred to as \textbf{weakly special}.

To that end we start by defining the weakly special subvarieties of $\mathfrak{h}_n$ that contain the origin in \ref{wesp}. Roughly speaking a subvariety $V\subset \mathfrak{h}_n$ that contains the origin will be weakly special if its diagonal coordinates satisfy $\Q-$linear relations, while the other algebraic relations on it come from algebraic relations on the eigencoordinates, i.e. from algebraic relations between generalized eigenvectors and the actions of matrices in $V$ on their generalized eigenspaces. 

These expectations are based on two properties of the exponential of a matrix. First, that if $v$ is an $f-$generalized eigenvector for the matrix $A$ then $v$ is also an $e^f-$generalized eigenvector for its exponential, the matrix $E(A)$. Secondly, the exponential is a bi-algebraic map between nilpotent and unipotent operators, the inverse being the logarithm. So the nilpotent action defined by $A$ on a generalized eigenspace gets mapped bi-algebraically to the corresponding action of $E(A)$ on the same space.

As a corollary of our Ax-Schanuel result we obtain an Ax-Lindemann-type result. This result will imply that the weakly special sets we define will be exactly the bi-algebraic subsets of $\mathfrak{h}_n$ that contain the origin. We present two examples of weakly special subvarieties as a motivation for the proper definition later on in \ref{wesp}.
\begin{exams}1. Let $V_1\subset \mathfrak{h}_3$ be the set of all upper triangular $3\times 3$ matrices $A$ satisfying the following conditions:\begin{enumerate}[label=(\subscript{C}{{\arabic*}})]
		
		\item $A$ has diagonal $(f_1,f_2,f_3)$ with $f_i\neq f_j$ for all $i\neq j$,
		\item $\vec{v}_2=(1,1,0)$ is an $f_2-$eigenvector, and
		\item there exists $s\in\C$ such that $\vec{v}_3=(s,s^2,1)$ is an $f_3-$eigenvector.
	\end{enumerate}	
Notice that all upper triangular matrices will have $\vec{v}_1=(1,0,0)$ as an eigenvector for the eigenvalue $f_1$. We also note that here the choice of ``$s^2$" is arbitrary and can be replaced by any algebraic function of $s$.
	
If we set $W_1=Zcl(V_1)$ to be the Zariski closure of $V_1$ in $\mathfrak{h}_n$, then $W_1$ will be a weakly special subvariety of $\mathfrak{h}_n$. 

To see that this is natural to expect, consider the set $X_1\subset U_3$ of all invertible $3\times3$ upper triangular matrices $A$ satisfying:\begin{enumerate}[label=(\subscript{D}{{\arabic*}})]
	
	\item $A$ has diagonal $(f_1,f_2,f_3)$ with $f_i\neq f_j$ for all $i\neq j$,
	\item $\vec{v}_2=(1,1,0)$ is an $f_2-$eigenvector, and
	\item there exists $s\in\C$ such that $\vec{v}_3=(s,s^2,1)$ is an $f_3-$eigenvector.
\end{enumerate} 

This set will be contained in $E(V_1)$, by the remarks above. In fact, $X_1$ is a Zariski open subset of $E(V_1)$ and $dim_\C(X_1)=dim_\C(E(V_1))=dim_\C(Zcl(E(V_1)))=4$.\\

2. Once again let us denote by $\vec{v}_1$ the vector $(1,0,0)$. We consider $V_2\subset\mathfrak{h}_3$ to be the set of all upper triangular matrices $A$ satisfying the following conditions:
\begin{enumerate}[label=(\subscript{C}{{\arabic*}})]
\item  $A$ has diagonal $(f_1,f_2,f_1)$ wtih $f_1\neq f_2$,
\item there exists $s\in \C$ such that $\vec{v}_2=(s,1,0)$ is an $f_2-$eigenvector,
\item for the same $s$ as above, the vector $\vec{v}_3=(0,s^2,1)$ is a generalized eigenvector for the eigenvalue $f_1$, and 
\item for the same $s$ as above, we have that \begin{center}
	$A\vec{v}_3=f_1\vec{v}_3+s^5\vec{v}_1$.
\end{center}\end{enumerate}
Here again the choices of ``$s^2$" and ``$s^5$" in $C_3$ and $C_4$ are arbitrary and can be replaced by any algebraic function in $s$. 

Setting $W_2=Zcl(V_2)$ we get a weakly special subvariety of $\mathfrak{h}_3$.

We argue as before, by considering the set $X_2\subset U_3$ of all invertible $3\times3$ upper triangular matrices $A$ satisfying:\begin{enumerate}[label=(\subscript{D}{{\arabic*}})]
	\item  $A$ has diagonal $(f_1,f_2,f_1)$ wtih $f_1\neq f_2$,
	\item there exists $s\in \C$ such that $\vec{v}_2=(s,1,0)$ is an $f_2-$eigenvector,
	\item for the same $s$ as above, the vector $\vec{v}_3=(0,s^2,1)$ is a generalized eigenvector for the eigenvalue $f_1$, and 
	\item for the same $s$ as above, we have that \begin{center}
		$A\vec{v}_3=f_1\vec{v}_3+f_1 s^5\vec{v}_1$.
\end{center}\end{enumerate}

The change in $D_4$ from $s^5$ to $f_1s^5$ is related to the action of the exponential on the nilpotent operator on the $f_1-$generalized eigenspace. It turns out that $X_2$ is a Zariski open subset of $E(V_2)$ and $dim_\C(X_2)=dim_\C(E(V_2))=dim_\C(Zcl(E(V_2)))=3$.\\

\end{exams}

\subsection*{Summary of results in Part II}

In the second part we deal with the same questions of functional transcendence this time for the exponential map $E:\mathfrak{gl}_n\rightarrow GL_n$ of the algebra of $n\times n$ matrices over $\C$.

In this case we generalize the picture we had in $\mathfrak{h}_n$. Instead of a specific canonical basis and eigencoordinates, we introduce the notion of the \textbf{data} of a matrix $A\in\mathfrak{gl}_n$. This new notion will effectively have the role that the eigencoordinates had for $\mathfrak{h}_n$.

The data of a matrix $A$ will comprise of the distinct eigenvalues of $A$, their multiplicities, their generalized eigenspaces, and the nilpotent operators defined by the matrix $A$ on each such generalized eigenspace. Given the number $k$ of distinct eigenvalues and the multiplicity $m_i$, $i=1,\ldots,k$, of each of them, the rest of the above information, i.e. the generalized eigenspaces and corresponding nilpotent operators, will be parametrized by an affine variety, which we will denote by $W_{k}(\vec{m})$. With the help of $W_k(\vec{m})$, we shall see that the Ax-Schanuel result for $E$ is reduced to the original Ax-Schanuel Theorem. 

Let $A$ be an $n\times n$ matrix with entries in $\C[[t_1,\ldots,t_m]]$. As before we will denote the field extension of $\C$ that results from adjoining to $\C$ the entries of the matrices $A$ and $E(A)$ by $\C(A,E(A))$. The result we obtain will then be 

\begin{thm*}[Weak Ax-Schanuel for $GL_n$]\thlabel{asgln} Let $g_{i,j}\in\C[[t_1,\ldots,t_m]]$ be power series with no constant term, where $1\leq i,j\leq n$. Let $f_i$, where $1\leq i\leq n$, denote the eigenvalues of the matrix $A=(g_{i,j})$. Let us also set $N=\dim_\Q\langle f_1,\ldots,f_n\rangle_{\Q}$, then 
	\begin{center}$tr.d._\C \C(A,E(A))\geq N+\rank J((g_{i,j});\vec{t})$.\end{center}\end{thm*}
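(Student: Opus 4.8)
The plan is to reduce the statement to the already-announced technical result \thref{asmainred} via the ``data'' parametrization $W_k(\vec m)$ sketched in the introduction, exactly in the way the upper-triangular case is reduced to the same theorem. First I would stratify $\mathfrak{gl}_n$ (or rather the formal neighbourhood of the origin given by the power-series matrix $A=(x_{i,j})$) according to the combinatorial type of the eigenvalue data: the number $k$ of distinct eigenvalues $z_1,\dots,z_n$ among the $n$ (counted with multiplicity) and the multiplicity vector $\vec m=(m_1,\dots,m_k)$. Over $\C[[t_1,\dots,t_m]]$ this combinatorial type is constant (the distinct eigenvalues are distinct power series, and coincidences among them are detected already at the level of the field $\C(A)$), so there is a well-defined $(k,\vec m)$ and a well-defined point $g=g(A)$ of $W_k(\vec m)$ recording the generalized eigenspaces and the nilpotent operators $N_i=(A-z_i I)|_{E_i}$.

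The key steps, in order, would be: (1) Show that $\C(A,E(A))$ and $\C(\vec z, g(A), E(A))$ have the same transcendence degree over $\C$ --- more precisely that the entries of $A$ are algebraic over $\C(\vec z, g(A))$ and conversely, so that passing to the ``eigencoordinate'' presentation changes nothing transcendence-theoretically. This is the analogue of the eigencoordinate computations of \ref{tritosection} and uses that, once $\vec m$ is fixed, $A$ is recovered from its eigenvalues, its eigenspaces, and the nilpotent parts by linear algebra over the field generated by these data (conjugating the block form back by the change-of-basis matrix, whose entries are rational in the data). (2) Identify $E(A)$ in these coordinates: on each generalized eigenspace $E_i$ the exponential acts as $e^{z_i}\exp(N_i)$, and $\exp(N_i)$ is a \emph{polynomial} in $N_i$ (nilpotency), hence its entries are polynomial in $g(A)$; so the only genuinely transcendental ingredients contributed by $E(A)$ beyond $\C(\vec z, g(A))$ are the scalars $e^{z_1},\dots,e^{z_n}$. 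Thus $\C(A,E(A))$ has the same transcendence degree over $\C$ as $\C(\vec z, g(A), e^{z_1},\dots,e^{z_n})$. (3) Apply \thref{asmainred} to the tuple consisting of the $z_i$ (an $N$-dimensional $\Q$-vector space of power series without constant term, playing the role of the linear coordinates in the classical statement), their exponentials, and the auxiliary functions $g(A)$ (playing the role of the $g_j$); this yields
\begin{center}
$tr.d._\C \C(\vec z, g(A), e^{z_1},\dots,e^{z_n}) \geq N + \rank J\big((z_i, g(A))\,|\,\vec t\big)$.
\end{center}
(4) Finish by comparing Jacobian ranks: since the entries $x_{i,j}$ and the functions $(z_i, g(A))$ generate the same field over $\C$ up to algebraic extension, the rank of $J((x_{i,j})|\vec t)$ equals the rank of $J((z_i,g(A))|\vec t)$ (rank of the Jacobian of a collection of power series is the transcendence degree of the differentials, an invariant of the generated field up to finite extension), giving the claimed bound.

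The main obstacle I expect is step (1)--(2): making the passage to the data $g(A)$ genuinely algebraic in both directions, uniformly in the combinatorial type, and in particular checking that the affine variety $W_k(\vec m)$ can be coordinatized so that (a) the change-of-basis matrix diagonalizing $A$ into block form has entries rational in $\vec z$ and $g(A)$, and (b) conversely $g(A)$ is rational (or at worst algebraic) in the entries of $A$. The subtlety is that a ``canonical'' choice of basis of each generalized eigenspace, together with a canonical normal form for the nilpotent operator $N_i$, must be made algebraically and without introducing spurious transcendence --- this is precisely the role played by the eigencoordinates in Part I, and the construction of $W_k(\vec m)$ in Part II is designed to handle it. Once that bookkeeping is in place, the transcendence estimate is a formal consequence of \thref{asmainred} together with the elementary fact that $\exp$ of a nilpotent is polynomial, and the Jacobian-rank comparison is routine.
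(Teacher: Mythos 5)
Your proposal follows essentially the same route as the paper: pass from $A$ to its data $(\vec z,\vec m, V_i, N_i)$ parametrized by $W_k(\vec m)$, show $tr.d._\C\C(A,E(A))=tr.d._\C\C(\{z_i,e^{z_i},g_j\})$ using that $\exp$ of a nilpotent operator is a polynomial (the paper's \thref{trdata} and \thref{trdataexp}, including the bi-algebraicity claim for $N\mapsto E(N)-\id$), and then invoke \thref{asmainred} with the $g_j$ as the auxiliary functions. Your steps (1)--(4), including the Jacobian-rank comparison the paper leaves implicit, match the paper's proof in substance.
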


\subsubsection*{Weakly special subvarieties for $\mathfrak{gl}_n$}

Again the above Ax-Schanuel result leads us to a description of the weakly special subvarieties of $\mathfrak{gl}_n$ that contain the origin. These are defined in detail in \ref{wekspegln}. Roughly speaking these will be subvarieties of $\mathfrak{gl}_n$ that are subject to algebraic relations of the following two types:\begin{enumerate}
	\item $\Q-$linear relations on the eigenvalues and
	\item algebraic relations on the coordinates of a variety $W_k(\vec{m})$, as above, for some $k\in \N$ and $\vec{m}\in\N^k$, or in other words, relations coming from a subvariety $W\subset W_k(\vec{m})$.
\end{enumerate}

In other words, the relations are either on the eigenvalues, or between the generalized eigenspaces and the corresponding nilpotent operators defined on them. All the while there can be no algebraic relations between eigenvalues and generalized eigenspaces or eigenvalues and nilpotent operators defined on those spaces. Alternatively, we require that the two types of relations considered above do not interfere with one another. 

These results for the Lie algebra $\mathfrak{gl}_n$ will imply, as a corollary, Ax-Schanuel and Ax-Lindemann type results for all subalgebras $\mathfrak{g}$ of $\mathfrak{gl}_n$ and their respective exponentials.

Finally, we note that Ax-Schanuel and Ax-Lindemann type statements are important tools in proving statements of unlikely intersections, such as Andr\'e-Oort and Zilber-Pink type statements following the Pila-Zannier method. With that in mind, in the final section, we conclude with some suggestions for further questions with a view towards applications of our results to questions in unlikely intersections in non-commutative linear algebraic groups.\\

\textbf{Acknowledgements:} I would like to thank my advisor Jacob Tsimerman for introducing me to the subject, for many helpful discussions, and for reading through earlier versions of this paper and making helpful suggestions and pointing to some errors. I would also like to thank Edward Bierstone and Abhishek Oswal for helpful remarks and useful discussions regarding the results of this paper. Finally, I would like to thank the anonymous referee for many helpful suggestions and remarks, and especially the proof of \thref{asmainred}.

\section{Ax-Schanuel in families}

In each of the two cases we deal with, we start by approaching the Ax-Schanuel result from a functional standpoint. We then use properties of the exponential maps in question to reduce to the following corollary of the classic Ax-Schanuel Theorem

\begin{prop}[Weak Ax-Schanuel in families]\thlabel{asmainred}Let $f_i,g_{j}\in \mathbb{C}[[t_1,\ldots,t_m]]$, where $1\leq i\leq n$, $1\leq j\leq k$, be power series. Then, assuming that the $f_i$ are $\mathbb{Q}-$linearly independent modulo $\mathbb{C}$,\begin{center}
		$tr.d._\mathbb{C}\mathbb{C}(\{f_i,g_{j},e^{f_i}:1\leq i\leq n, 1\leq j\leq k\})\geq n+\rank(J(\vec{f},\vec{g};\vec{t}))$.\end{center}\end{prop}
\begin{proof}\footnote{The author thanks the anonymous referee for this proof that helped shorten this section significantly.}Without loss of generality, we may assume that the set \begin{center}
		$\{f_i,g_j:1\leq i\leq n, 1\leq j\leq k\}$
	\end{center} is a $\Q-$linearly independent modulo $\C$ subset of $\mathbb{C}[[t_1,\ldots,t_m]]$ and that the $g_j$ for $1\leq j\leq k$ are algebraically independent over $\C$. Consider the fields $K_1=\mathbb{C}(\{f_i,g_{j},e^{f_i},e^{g_j}:1\leq i\leq n, 1\leq j\leq k\})$ and $K_2=\mathbb{C}(\{f_i,g_{j},e^{f_i}:1\leq i\leq n, 1\leq j\leq k\})$.
	
	Then by the classic Ax-Schanuel Theorem we have \begin{center}
		$tr.d._\mathbb{C}K_1\geq n+k+\rank(J(\vec{f},\vec{g};\vec{t}))$.
	\end{center}
	
	On the other hand \begin{center}
		$tr.d._{\C}\C(\{e^{g_j}:1\leq j\leq k\})+tr.d._{\C}K_2\geq tr.d._{\C}K_1$, and
		$tr.d_{\C}\C(\{e^{g_j}:1\leq j\leq k\})\leq k$.\end{center}Combining all three of these the result follows.\end{proof}
\begin{rmks}
	1. We note that \thref{asmainred} is probably known as a result in the field. However, since we couldn't find a reference, we have dedicated this short section to its proof.\\
	2. Following the ideas in \cite{tsimax} we can obtain the Full Ax-Schanuel Theorem for families along with a few corollaries. For these the interested readers are referred to the Appendix. 
\end{rmks}

We finally note, that the same proof gives, by reduction to the classic Ax-Schanuel Theorem\footnote{See Theorem $3$ in \cite{ax1971schanuel}.} as above, the following more abstract variant of the above proposition:
\begin{prop}\thlabel{propinter}Let $\Q\subset C\subset F$ be a tower of fields and $\Delta$ a set of derivations of the field $F$ with $\underset{D\in\Delta}{\bigcap}ker D=C$. Let $f_1,\ldots,f_m\in F$ and $z_1,\ldots,z_m\in F^*$ be such that for all $D\in \Delta$ and $1\leq i\leq m$, $Df_i=\frac{Dz_i}{z_i}$. Let us also assume that for some $n\leq m$ the $f_i$ for $1\leq i\leq n$ are $\Q-$linearly independent modulo $C$. Then,\begin{center}
		$tr.d._{C}C(\{ f_i,z_j :1\leq i \leq m, 1\leq j\leq n    \})\geq n+rank(Df_i)_{\underset{D\in\Delta}{1\leq i\leq m}}$.
	\end{center}

\end{prop}
\part{Upper Triangular Matrices}

\textbf{Notation:} We will denote by $U=U_n$ the algebraic group of $n\times n$ upper triangular invertible matrices over the field $\mathbb{C}$, and by $\mathfrak{h}=\mathfrak{h}_n$ its Lie algebra, i.e. the algebra of $n\times n$ upper triangular matrices. Also we denote the corresponding exponential map $\mathfrak{h}_n\rightarrow U_n$ by $E=E_n$ and its non-diagonal entries by $E_{i,j}$, $1\leq i<j\leq n$. 

We will mainly concern ourselves with transcendence degrees over the field $\C$ of extensions of the form $\C(\Sigma)$ with $\Sigma$ a finite subset of some ring of power series, or a finite subset of regular functions on some variety over the field $\C$. Of particular interest will be the case were $\Sigma$ is the set of entries of a matrix $A$, or a matrix $A$ and its exponential $E(A)$. In that case we denote the field extension $\C(\Sigma)$ over $\C$ by $\C(A)$ and $\C(A,E(A))$ respectively.

Consider elements $f_i,g_{i,j}\in\C[[t_1,\ldots,t_m]]$, with $1\leq i<j\leq n$. As in the introduction, we will denote by $\langle f_1,\ldots,f_n\rangle_{\Q}$ the linear span of the $f_i\in \C[[t_1,\ldots,t_m]]$ over $\Q$. Also, as in the introduction, $J(\vec{f},\vec{g};\vec{t})$ denotes the $\frac{n(n+1)}{2}\times m$ Jacobian matrix with entries of the form $\partial h_s/\partial t_j$, where $h_s$ with $1\leq s\leq \frac{n(n+1)}{2}$ is an ordering of the $g_{i,j}$ and the $f_i$. The rank of the Jacobian is its rank over the fraction field of $\mathbb{C}[[t_1,\ldots,t_m]]$. 

We also introduce some rings that will be needed in some of the proofs that follow. First consider $z_i$, $1\leq i\leq n$, and $x_{i,j}$, $1 \leq i<j\leq n$, to be independent variables over $\C$. Let $M_0:=\C(\{z_i,x_{i,j}:1\leq i<j\leq n\})$. Then for $1\leq i<j\leq n$, a subset $I\subset \{(s,t):1\leq s<t\leq n\}$, and a subset $J\subset\{1,\ldots,n\}$ we define the subrings \begin{center}
	$R_{i,j}(I):=\Q[x_{s,t},\frac{1}{z_l-z_j}: \min(i,j)\leq s<t\leq \max(s,t),(l,j)\notin I, (s,t)\neq (i,j)]$, and
\end{center} \begin{center}
	$D_{i,j}(J):=\Q(z_j)[\frac{1}{z_l-z_j},x_{s,t}: \min(i,j)\leq s<t\leq \max(i,j), i < l < j, l\notin J]$\end{center}of the field $M_0$.

We note that elements of $M_0$ can be naturally viewed as rational functions on $\mathfrak{h}_n$, with the $z_i(A)$ and $x_{s,t}(A)$ the corresponding entries of $A\in\mathfrak{h}_n$. Likewise, since they are subrings, the same is true for elements of the rings $D_{i,j}(J)$ and $R_{i,j}(I)$ defined above.\\

Our goal here is to state an Ax-Schanuel-type Theorem and reduce its proof to \thref{asmainred}. The first step towards that will be to find a lower bound for the transcendence degree \begin{center}
	$tr.d._\C \C(A,E(A))=tr.d._\C\C(\{f_i,g_{i,j}, E_{i,j}(A), e^{f_i}:1\leq i<j\leq n\})$
\end{center}where $f_i,g_{i,j}\in\C[[t_1,\ldots,t_m]]$, with $1\leq i<j\leq n$, and $A=\diag(\vec{f})+(g_{i,j})$.\\

We start with the case of $U_n$, instead of jumping straight to the case of $GL_n$, for a few reasons. First of all, the case of $U_n$, as we will see, is open to more calculations and, because of this, examples and notions, such as weakly special sets, can be more easily formulated in this setting. Furthermore, while the technical difficulties that appear in dealing with the exponential map seem to be of a similar nature in both of these linear groups, they are easier to deal with in the case of $U_n$, mainly again thanks to us being able to adopt a more computational approach. Finally, we believe that, in future work, the restrictions imposed by our method will be easier to lift in the case of $U_n$ first, so as to gain insight in the more technical case of $GL_n$. We return to this in \ref{applications}.

\section{Eigencoordinates }\label{tritosection}

In this section we consider fixed $f_i,g_{i,j}\in\C[[t_1,\ldots,t_m]]$ such that the $f_i$ are without constant terms, with $1\leq i<j\leq n$, and let $A=\diag(\vec{f})+(g_{i,j})$ be the matrix they define.

The main idea is that eigenvectors and generalized eigenvectors for a matrix $A$ will remain as such for the matrix $E(A)$. As we will see shortly the other information, that will naturally appear, and that we will have to keep track of, are the nilpotent operators defined by $A$ and $E(A)$ on their respective generalized eigenspaces.

In this section we define a canonically chosen basis of generalized eigenvectors for a given matrix $A\in \mathfrak{h}_n$ that will only depend on the multiplicities of the eigenvalues of $A$. To this basis we can assign coordinates, which will be rational functions on the entries of $A$. At the same time we achieve a canonical description of the respective nilpotent operators defined by $A$ and $E(A)$ on each of their generalized eigenspaces. To each such operator we will be able to naturally assign certain rational functions of the entries of $A$. The combination of the above rational functions, both those describing the basis and those describing the nilpotent operators, will be what we will refer to as \textbf{eigencoordinates}. 

These new notions have a distinct advantage, as we will see, in our setting, when dealing with questions surrounding transcendence properties. Namely they will allow us to:\\

1. replace the $g_{i,j}$ by the eigencoordinates of $A$, when dealing with transcendence questions, and\\

2. capture the essence of the map $E$, as far as transcendence is concerned, and replace the $E_{i,j}$ by the eigencoordinates of $A$, again in questions concerning transcendence.\\

The scope of this section is to state and prove the main lemmas that we will need concerning these new notions. As a motivation we first deal with the case where all of the eigenvalues $f_i$ of our matrix $A$ are distinct. After that we proceed with dealing with the general case.

\subsection{Distinct Eigenvalues}

Let $A$, $f_i$, and $g_{i,j}$ be as defined above, we assume that all the eigenvalues of our matrix $A$ are distinct. This is equivalent to the eigenvalues being distinct for both $A$ and $E(A)$, since the $f_i$ have no constant term. Among all the possible bases of eigenvectors for $A$ we choose one in a canonical way.

Let $K$ be an algebraic closure of the field $\C(\{f_i,g_{i,j}:1\leq i<j\leq n\})$, and let $t_{i,j}\in K$, with $1\leq i<j\leq n$ be such that the vector \begin{center}
	$\vec{v}_i=(-t_{1,i},\ldots,-t_{i-1,i},1,0,\ldots,0)$
\end{center} is an $f_i-$eigenvector for $A$. In this case $\vec{v}_i$ will also be an $e^{f_i}-$eigenvector for $E(A)$. We leave the proof of the existence of this canonical basis for $A$, chosen as above, to \thref{canbas}.

For the above, we will have the following
\begin{lemma}\thlabel{eigen1}Let $A$ and $t_{i,j}$ be as above. Then\begin{equation*}
tr.d._\C \C(A) =tr.d._\C \C(\{f_i,t_{i,j}:1\leq i<j\leq n   \}).
	\end{equation*} \end{lemma}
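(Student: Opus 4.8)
The plan is to show that the two fields $\C(A) = \C(\{f_i, g_{i,j}\})$ and $\C(\{f_i, t_{i,j}\})$ are in fact algebraic over one another, indeed that each lies in a finite extension of the other, which forces equality of transcendence degrees. The crucial observation is that the defining conditions "$\vec{v}_i$ is an $f_i$-eigenvector for $A$" form, for each $i$, a linear system in the unknowns $t_{1,i},\dots,t_{i-1,i}$ with coefficients that are polynomial (in fact linear) in the entries of $A$. So first I would write out the eigenvector equation $A\vec{v}_i = f_i \vec{v}_i$ explicitly. Because $A$ is upper triangular with diagonal $\vec f$, the last nonzero coordinate of $\vec v_i$ is in position $i$, and the equation in rows $i, i-1, \dots, 1$ becomes a triangular (bottom-up) linear recursion determining $t_{i-1,i}, t_{i-2,i},\dots, t_{1,i}$ successively. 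Since the eigenvalues are distinct, the relevant diagonal differences $f_\ell - f_i$ are nonzero for $\ell < i$, so at each step we solve a single linear equation with invertible (nonzero) leading coefficient $f_\ell - f_i$; this expresses each $t_{\ell,i}$ as a rational function of the $f_j$ and $g_{j,j'}$. Hence $t_{i,j} \in \C(A)$ for all $i<j$, giving the inclusion $\C(\{f_i,t_{i,j}\}) \subseteq \C(A)$ and therefore $tr.d._\C\C(\{f_i,t_{i,j}\}) \le tr.d._\C \C(A)$.

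For the reverse inequality I would argue that conversely the $g_{i,j}$ are determined by the $f_i$ and the $t_{i,j}$. The cleanest way is to use the change-of-basis matrix $P$ whose columns are $\vec v_1,\dots,\vec v_n$: $P$ is upper triangular with $1$'s on the diagonal and entries $-t_{i,j}$ above, so $P$ has entries in $\C(\{t_{i,j}\})$ and $\det P = 1$, whence $P^{-1}$ also has entries in $\C(\{t_{i,j}\})$. Since the $\vec v_i$ are eigenvectors with eigenvalues $f_i$, we have $A = P \, \diag(\vec f)\, P^{-1}$, so every entry of $A$ — in particular every $g_{i,j}$ — is a polynomial in the $t_{i,j}$ and the $f_i$. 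This yields $\C(A) \subseteq \C(\{f_i, t_{i,j}\})$, hence the matching inequality of transcendence degrees, and combining the two gives the claimed equality. (Strictly, one should note $t_{i,j}\in K$ a priori, but the explicit recursion shows they land in $\C(A)$, so no genuine field-of-definition subtlety arises.)

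The only real subtlety — and the step I expect to need the most care — is making sure the canonical basis is well-defined and that the recursion never divides by zero; but this is exactly guaranteed by the distinct-eigenvalue hypothesis (so all $f_\ell - f_i \ne 0$) and is the content deferred to \thref{canbas}, which I am allowed to invoke. Granting that lemma, the proof is essentially the bookkeeping of the two inclusions above: one via solving the triangular eigenvector system, the other via the conjugation $A = P\,\diag(\vec f)\,P^{-1}$. I would keep the exposition short, present the recursion in one or two displayed lines for a generic row, and cite \thref{canbas} for existence/uniqueness of the $t_{i,j}$.
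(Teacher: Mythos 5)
Your proof is correct, and the first inclusion is exactly the paper's argument: row-by-row back-substitution in the triangular system $A\vec v_i=f_i\vec v_i$, using $f_\ell-f_i\neq 0$, to land each $t_{\ell,i}$ in $\C(A)$. For the reverse inclusion you diverge slightly: the paper reads it off from the explicit shape of the solution, namely $t_{i,j}(A)=\frac{g_{i,j}-Q_{i,j}(A)}{f_i-f_j}$ with $Q_{i,j}$ depending only on $g_{s,r}$ for indices strictly inside the range $(i,j)$, so that $g_{i,j}=(f_i-f_j)t_{i,j}+Q_{i,j}(A)$ can be unwound by induction on $j-i$; you instead package it as $A=P\,\diag(\vec f)\,P^{-1}$ with $P$ unipotent upper triangular in the $t_{i,j}$, which is cleaner and immediately gives $\C(A)\subseteq\C(\{f_i,t_{i,j}\})$ as fields, not just equality of transcendence degrees. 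The trade-off is that the paper's more explicit normal form (the membership of $Q_{i,j}$ in the ring $\Z[\frac{1}{f_s-f_j},g_{s,r}]$) is not a throwaway: it is reused verbatim in the proofs of \thref{eigenfree} and \thref{ecoord1}, where one needs to track exactly which denominators occur; your conjugation argument proves this lemma but does not by itself supply that refined information.
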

\begin{proof} The condition $A\vec{v}_j=f_j\vec{v}_j$ translates to the following system of equations
	\begin{gather}
	\begin{aligned}
	\frac{g_{i,j}-g_{i,j-1}t_{j-1,j}-\ldots -g_{i,i+1}t_{i+1,j}}{f_i-f_j}&=t_{i,j}\\
	\vdots\\\frac{g_{j-1,j}}{f_{j-1}-f_j}&= t_{j-1,j}
	\end{aligned}\label{eq:eq1}
	\end{gather}
Since all of the $f_i$ are distinct we can write the $t_{i,j}$ as rational functions on the entries of $A$ by solving the above system of equations. In particular we get that for all $i<j$: 
	
a. $t_{i,j}\in \Z[\frac{1}{f_s-f_j},g_{s,r}:\min(i,j)\leq s<r\leq \max(i,j)]$, and\\

b. Let $I_{i,j}=\{(s,j): s\leq i$ or $ s\geq j \} $. There exists $Q_{i,j}\in R_{i,j}(I_{i,j})$ such that\begin{center}
	$t_{i,j}(A)=\frac{g_{i,j}-Q_{i,j}(A)}{f_i-f_j}$.\end{center}
Our result now follows trivially from the above remarks.\end{proof}	

Since the matrices $A$ and $E(A)$ have the same eigenvectors, in the case where both $A$ and $E(A)$ have distinct eigenvalues, we get that, for all $1\leq i<j\leq n$, we will have $t_{i,j}(A)=t_{i,j}(E(A))$. This remark, together with the proof of \thref{eigen1}, implies

\begin{lemma}\thlabel{eigenfree}Let $A$ and $t_{i,j}$ be as defined above. Then \begin{equation*}
tr.d._\C \C(E(A)) =tr.d._\C \C(\{e^{f_i},t_{i,j}(A):1\leq i<j\leq n\}),\text{ and}\end{equation*}
\begin{equation*}tr.d._\C\C(A, E(A))=tr.d._\C\C(\{f_i,e^{f_i},t_{i,j}(A):1\leq i<j\leq n\}). \end{equation*}	
\end{lemma}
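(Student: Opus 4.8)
The plan is to deduce \thref{eigenfree} directly from \thref{eigen1}, applied both to $A$ and to the matrix $E(A)$, together with the remark recorded just above the statement that $t_{i,j}(A)=t_{i,j}(E(A))$ for all $i<j$. The one extra ingredient needed is that the proof of \thref{eigen1} in fact yields a field identity, not merely an equality of transcendence degrees. Indeed, by point (a) of that proof each $t_{i,j}(A)$ is a rational function of the entries of $A$, so $t_{i,j}(A)\in\C(A)$; and by point (b), a downward induction on $j-i$ --- base case $g_{i,i+1}=(f_i-f_{i+1})t_{i,i+1}(A)$, inductive step $g_{i,j}=(f_i-f_j)t_{i,j}(A)+Q_{i,j}(A)$ with $Q_{i,j}$ a polynomial expression in the $f_l$ and the $g_{s,r}$ having $r-s<j-i$ --- shows that every $g_{i,j}$ lies in $\C(\{f_i,t_{i,j}(A):1\le i<j\le n\})$. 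Hence $\C(A)=\C(\{f_i,t_{i,j}(A):1\le i<j\le n\})$ as fields.

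Next I would rerun the same argument with $E(A)$ in place of $A$. The matrix $E(A)$ is again upper triangular, with diagonal $(e^{f_1},\ldots,e^{f_n})$, and --- since the $f_i$ have no constant term --- its eigenvalues are pairwise distinct precisely when those of $A$ are, as already noted in the text. Thus \thref{canbas} supplies the canonical eigenvector basis of $E(A)$, and the proof of \thref{eigen1} applies to $E(A)$ verbatim (it uses only that the diagonal entries are pairwise distinct), giving the field identity $\C(E(A))=\C(\{e^{f_i},t_{i,j}(E(A)):1\le i<j\le n\})$. Now I invoke the remark preceding the lemma: because all eigenvalues are simple, the $e^{f_i}$-eigenspace of $E(A)$ equals the $f_i$-eigenspace of $A$ --- both are the line spanned by $\vec{v}_i$ --- so the canonically normalized eigenvector is literally the same vector, whence $t_{i,j}(E(A))=t_{i,j}(A)$ for every $i<j$. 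Substituting, $\C(E(A))=\C(\{e^{f_i},t_{i,j}(A):1\le i<j\le n\})$; taking transcendence degrees over $\C$ gives the first assertion.

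For the second assertion I would simply take composita: $\C(A,E(A))=\C(A)\cdot\C(E(A))=\C(\{f_i,t_{i,j}(A)\})\cdot\C(\{e^{f_i},t_{i,j}(A)\})=\C(\{f_i,e^{f_i},t_{i,j}(A):1\le i<j\le n\})$, and then pass to transcendence degrees over $\C$.

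There is no serious obstacle in this argument. The only point requiring a little care is the upgrade of \thref{eigen1} from a transcendence-degree statement to a field identity, and the verification that its proof goes through for $E(A)$; both are straightforward, the distinctness of the eigenvalues of $E(A)$ being exactly the equivalence already recorded before the subsection. Everything else is routine manipulation of composita of fields.
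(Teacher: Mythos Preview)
Your argument is correct and is essentially the same approach as the paper's: both combine the proof of \thref{eigen1} (applied to $A$ and to $E(A)$) with the identity $t_{i,j}(A)=t_{i,j}(E(A))$; the paper records this via the explicit formula $E_{i,j}(A)=(g_{i,j}-Q_{i,j}(A))\frac{e^{f_i}-e^{f_j}}{f_i-f_j}+Q_{i,j}(E(A))$, whereas you package the same content as a field identity and a compositum. One cosmetic slip: your induction on $j-i$ is upward, not downward.
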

\begin{proof}
 Let $\vec{v}_i$ be the canonically chosen basis for $A$. Then, for all $1\leq i<j\leq n$, by combining the proof of \thref{eigen1} and the equality $t_{i,j}(A)= t_{i,j}(E(A))$, there exists $Q_{i,j}$ as in the proof of \thref{eigen1} such that \begin{center}$E_{i,j}(A)=(g_{i,j}-Q_{i,j}(A))\frac{e^{f_i}-e^{f_j}}{f_i-f_j}+Q_{i,j}(E(A))$.\end{center} The result then follows trivially from these remarks.\end{proof}

\subsection{Repeating eigenvalues}

In the case where we have eigenvalues with multiplicity greater than $1$ we have to alter our approach. The idea is to generalize the approach of the previous subsection. In other words, we wish to find a canonically defined basis, which will allow us to define coordinates that characterize our original matrix $A$ uniquely. Furthermore we wish to describe those coordinates as algebraic functions of the coordinates of $A$. Our ultimate goal is to obtain results about transcendence degrees similar to those we proved in the previous case.\\

We assume that the matrix $A$ has eigenvalues with multiplicities possibly greater than $1$. By our assumption that the $f_i$ have no constant term, each eigenvalue $e^{f_i}$ of $E(A)$ has the same multiplicity as the respective eigenvalue $f_i$ of $A$.

\subsubsection{The Canonical Basis}
Our first objective will be to describe and prove the existence of a certain canonical basis for $A$. We begin by describing the canonical basis of each eigenspace, then we combine these to create the basis we want. 

Just as before we let $A=\diag(\vec{f})+(g_{i,j})$ be an upper triangular matrix with entries in $\C[[t_1,\ldots, t_m]]$ and $K$ be an algebraic closure of the field $\C(\{f_i,g_{i,j}:1\leq i<j\leq n\})$.

\begin{lemma}\thlabel{canbas} Let $z=f_{i_1}=\ldots=f_{i_{k-1}}=f_{i_k}$ with $i_1<\ldots < i_k$. We also assume that $f_i\neq z$ for all $i\neq i_j$. Let $M(z)$ be the generalized eigenspace for the eigenvalue $z$. Then there exists a unique basis $\cal{B}$$_z$ of $M(z)$ consisting of vectors $\vec{v}_{i_j}$, $1\leq j\leq k$, such that 
\begin{enumerate}
	\item $\vec{v}_{i_j}$ is of the form \begin{center}
		$\vec{v}_{i_j}=(-t_{1,i_j},\ldots, -t_{i_j-1,i_j},1,0,\ldots,0)$, and
	\end{center}
\item $t_{l,i_j}=0$ for $l=i_r$ where $1\leq r\leq j-1$, and
\item there exist $s_{i_l,i_r}\in K$ for $1\leq l<r\leq k$ such that \begin{equation*}A\vec{v}_{i_j}=z\vec{v}_{i_j}+ s_{i_1,i_{j}}\vec{v}_{i_1}+\cdots+s_{i_{j-1},i_j}\vec{v}_{i_{j-1}}.
\end{equation*}
\end{enumerate}\end{lemma}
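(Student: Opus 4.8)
The plan is to induct on $k$, the multiplicity of the eigenvalue $z$, constructing the vectors $\vec v_{i_j}$ one at a time in increasing order of $j$ and verifying uniqueness along the way. The base case $k=1$ is the situation of the previous subsection (essentially $A$ restricted to the relevant span acts with a single eigenvalue of multiplicity one, so $\vec v_{i_1}$ is the eigenvector normalized to have a $1$ in position $i_1$ and zeros after it, and its existence and uniqueness follow as in \thref{eigen1}). So assume $k\ge 2$ and that we have produced $\vec v_{i_1},\dots,\vec v_{i_{j-1}}$ satisfying (1)--(3); I would construct $\vec v_{i_j}$.

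First I would set up the right ambient space: since $A$ is upper triangular with $z$ occurring in diagonal positions $i_1<\dots<i_k$ and no other diagonal entry equal to $z$, the generalized eigenspace $M(z)$ has dimension $k$, and a vector of the prescribed shape $(-t_{1,i_j},\dots,-t_{i_j-1,i_j},1,0,\dots,0)$ with a leading $1$ in coordinate $i_j$ is forced to have its free-parameter structure determined by the equations below; the normalization in (1) pins down the scaling and the "trailing zeros," and condition (2) removes the ambiguity of adding multiples of the previously constructed $\vec v_{i_r}$ (which all have their leading $1$ in an earlier coordinate $i_r<i_j$). Concretely, I would write the unknown $\vec v_{i_j}=e_{i_j}-\sum_{l<i_j} t_{l,i_j} e_l$ and impose $(A-zI)\vec v_{i_j}\in \mathrm{span}(\vec v_{i_1},\dots,\vec v_{i_{j-1}})$, i.e. that $(A-zI)\vec v_{i_j}=\sum_{r=1}^{j-1}s_{i_r,i_j}\vec v_{i_r}$ for scalars $s_{i_r,i_j}$ to be determined. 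Reading this off coordinate by coordinate from the bottom up (coordinates $i_j-1, i_j-2,\dots,1$) gives a triangular system: in rows whose index is not one of the $i_r$ the diagonal coefficient is $f_l-z\ne 0$, so $t_{l,i_j}$ is solved uniquely as a rational expression in the $f$'s, the $g$'s, the previously-found $t$'s, and the $s_{i_r,i_j}$; in rows whose index is some $i_r$ (with $r<j$) the diagonal contribution vanishes, and that equation instead determines the scalar $s_{i_r,i_j}$ uniquely (again rationally), while condition (2) forces $t_{i_r,i_j}=0$ so there is no free parameter left. Thus every unknown is determined, and running the triangular back-substitution shows both existence and uniqueness of $\vec v_{i_j}$, and simultaneously of the $s_{i_r,i_j}$; collecting over $j=1,\dots,k$ gives the basis $\mathcal B_z$ and all the scalars $s_{i_l,i_r}$ of (3).

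The main obstacle I anticipate is bookkeeping rather than conceptual: one must check that the triangular system genuinely decouples in the claimed way — that the "bad" rows $l=i_r$ are exactly the ones where the diagonal $f_l-z$ degenerates, that in those rows the coefficient of the new unknown $s_{i_r,i_j}$ is nonzero (it is $1$, coming from the leading entry of $\vec v_{i_r}$), and that condition (2) is consistent, i.e. setting $t_{i_r,i_j}=0$ does not over-determine the system. This last point is where the ordering $i_1<\dots<i_k$ and the normalization (1) of the earlier vectors is used: because $\vec v_{i_r}$ has zeros in all coordinates $>i_r$, subtracting multiples of the $\vec v_{i_r}$ only ever affects coordinates $\le i_r < i_j$, so the shape in (1) is preserved and the freedom removed by (2) is precisely the freedom of adding $\mathrm{span}(\vec v_{i_1},\dots,\vec v_{i_{j-1}})$. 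I would also remark, for use later, that the back-substitution exhibits each $t_{l,i_j}$ and each $s_{i_r,i_j}$ as an element of a localization $\Z[\,1/(f_s-f_{i_j}),\, g_{s,r}\,]$ of the polynomial ring in the entries of $A$, in analogy with part (a) of \thref{eigen1}; this is not needed for the bare existence-and-uniqueness statement but falls out of the same computation and is what makes these quantities "eigencoordinates." Finally, since $E(A)$ is a polynomial in $A$ it preserves $M(z)$ and commutes with $A$, and one notes $E(A)\vec v_{i_j}\in M(z)$ with the same leading-coordinate normalization, which is the observation that will let the following lemmas transfer everything to $E(A)$; but that is for the next lemma, not this one.
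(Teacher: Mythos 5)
Your proposal is correct and follows essentially the same route as the paper: an inductive construction of the $\vec v_{i_j}$ in which the normalization (1)--(2) plus the requirement $(A-zI)\vec v_{i_j}\in\mathrm{span}(\vec v_{i_1},\ldots,\vec v_{i_{j-1}})$ is read off coordinate-by-coordinate from the bottom up as a triangular system, with rows $l\notin\{i_1,\ldots,i_k\}$ determining the $t_{l,i_j}$ (using $f_l-z\neq 0$) and rows $l=i_r$ determining the $s_{i_r,i_j}$. This matches the paper's systems \eqref{eq:eq2}, \eqref{eq:eq3}, \eqref{eq:eq4} and its inductive step, so no further comparison is needed.
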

\begin{proof}We proceed by induction on $k=\dim_\C(M(z))$. For $k=1$ the uniqueness follows from the unique solution to the $t_{i,j}$ described by the equations \eqref{eq:eq1}. 

Assume that $k=2$. Then applying the system \eqref{eq:eq1} we can determine the vector $\vec{v}_{i_1}$ which will be an eigenvector for $A$. Since $\vec{v}_{i_2}$ will in general be a generalized eigenvector then we will have 
\begin{center}$(A-zI_n)\vec{v}_{i_2}=s_{i_1,i_2}\vec{v}_{i_1}$,\end{center}for some $s_{i_1,i_2}\in K$.
We can therefore assume without loss of generality that $t_{i_1,i_2}=0$.

This relation will describe the coefficients of $\vec{v}_{i_2}$ uniquely thanks to the following series of equations:
\begin{enumerate}
	\item In the range $i_1< j < i_2$ we get 
	\begin{gather}
	\begin{aligned}
	0&=-(f_{i_2-1}-f_{i_2})t_{i_2-1,i_2}+g_{i_2-1,i_2}\\
	&\vdots\\ 0&=-(f_{i_1+1}-f_{i_2})t_{i_1+1,i_2}-\ldots-g_{i_1+1,i_2-1}t_{i_2-1,i_2} +g_{i_1+1,i_2}
	\end{aligned}\label{eq:eq2}
	\end{gather}
	
	\item For $j=i_1$
	\begin{gather}
	\begin{aligned}
	   -g_{i_1,i_1+1}t_{i_1+1,i_2}-\ldots- g_{i_1,i_2-1}t_{i_2-1,i_2}+g_{i_1,i_2}&=s_{i_1,i_2}
\end{aligned}\label{eq:eq3}
\end{gather}

	\item In the range $1\leq j<i_1$
\begin{gather}
	\begin{aligned}
	-(f_{j}-f_{i_2})t_{j,i_2}-\ldots- g_{j,i_2-1}t_{i_2-1,i_2}+g_{j,i_2}&=s_{i_1,i_2}t_{j,i_1}.
\end{aligned}\label{eq:eq4}
\end{gather}
\end{enumerate}

Solving the above system, starting from the first equation of \eqref{eq:eq2} and moving to the final equation described in the system \eqref{eq:eq4} provides a unique solution for $t_{j,i_2}$ in terms of the $f_s$ and $g_{s,t}$.

Assume the result holds for $\dim_{\C} M(z)=k$. Then in order to prove the inductive step we can create a similar system of equations with unique solution for the $t_{i_s,i_t}$ in terms of the coefficient of the matrix $A$.

We force relations on the canonical basis to be chosen so that
\begin{equation}
A\vec{v}_{i_j}=z\vec{v}_{i_j}+ s_{i_1,i_{j}}\vec{v}_{i_1}+\cdots+s_{i_{j-1},i_j}\vec{v}_{i_{j-1}}\label{eq:eq6}.
\end{equation}

Then by induction it is enough to determine the $t_{s,i_{k+1}}$, since the rest of the vectors will constitute a basis for the respective eigenspace of a smaller diagonal submatrix of $A$. To do this we just translate \eqref{eq:eq6} for $j=k+1$ to a system of equations similar to the systems \eqref{eq:eq2}, \eqref{eq:eq3}, and \eqref{eq:eq4}. \end{proof}

Combining all of the canonical bases of the eigenspaces we get a basis 
\begin{center}
		$\vec{v}_i=(-t_{1,i},\ldots,-t_{i-1,i},1,0,\ldots,0)$,
\end{center}with $1\leq i\leq n$, $t_{i,j}\in K$ for $1\leq  i<j\leq n$, and such that $t_{i,j}=0$ if $f_i=f_j$. This will be the \textbf{canonical basis} of $A$.

\subsubsection{Basic Lemmas}

In \thref{canbas} we introduced the coefficients $s_{i_l,i_r}$ for $l<r$. These coefficients determine uniquely the nilpotent operator defined by $A$ on the generalized eigenspace $M(z)$, i.e. they determine the nilpotent operator $(A-zI_n)|_{M(z)}$. In particular if $z=f_{i_1}=\ldots=f_{i_k}$ then we have $s_{i_l,i_r}\in K$, where $K$ is an algebraic closure of $\C(A)$, and they are such that for $1\leq l<r\leq k$ \begin{equation}\label{eq:system}A\vec{v}_{i_j}=z\vec{v}_{i_j}+ s_{i_1,i_{j}}\vec{v}_{i_1}+\cdots+s_{i_{j-1},i_j}\vec{v}_{i_{j-1}}.
\end{equation}

From the proof of \thref{canbas} both the $t_{i,j}$ and the $s_{i,j}$ are rational functions of the $f_i$ and $g_{i,j}$. Their combined information turns out to be exactly what we will need in what follows.
\begin{defn} Let $A=\diag(\vec{f})+(g_{i,j})$ be an upper triangular matrix with entries in $\C[[t_1,\ldots, t_m]]$, such the diagonal entries $f_i$ have no constant term. Let $\vec{v}_i=(-t_{1,i},\ldots,-t_{i-1,i},1,0,\ldots,0)$ be a canonical basis for $A$. Also we consider the $s_{i,j}$ that satisfy the equations in \eqref{eq:system} for those eigenvalues of $A$ with multiplicity greater than $1$. We define the \textbf{eigencoordinates} of $A$ to be\begin{center}
		$T_{i,j}(A)=\begin{cases}
		t_{i,j},& \text{if } f_i\neq f_j\\
		s_{i,j},& \text{if } f_i=f_j
		\end{cases}$.
\end{center} \end{defn}

At this point we want to replicate the results of \thref{eigen1} and \thref{eigenfree}. We start with the following
\begin{lemma}\thlabel{ecoord1} Let $A$ be as above and let $T_{i,j}(A)$ be its eigencoordinates. Then\begin{equation*}
	tr.d._\C \C(A) =tr.d._\C \C(\{f_i,T_{i,j}(A):1\leq i<j\leq n   \}).
	\end{equation*}   \end{lemma}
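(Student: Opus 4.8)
The plan is to establish the two transcendence-degree identities separately but along the same lines as Lemma~\ref{eigen1}, using the fact that both the $t_{i,j}$ and the $s_{i,j}$ are produced as rational functions of the entries of $A$ by the recursive systems of equations \eqref{eq:eq1}, \eqref{eq:eq2}, \eqref{eq:eq3}, \eqref{eq:eq4}, and \eqref{eq:system} exhibited in the proof of Lemma~\ref{canbas}. First I would observe the easy inclusion: since each $T_{i,j}(A)$ is (by construction in that proof) a rational function with coefficients in $\Z$ in the $f_s$, the $g_{s,r}$, and the factors $\tfrac{1}{f_s - f_j}$ over pairs with $f_s \neq f_j$, every $T_{i,j}(A)$ lies in $\C(A)$, and obviously every $f_i$ lies in $\C(A)$; hence $\C(\{f_i, T_{i,j}(A)\}) \subseteq \C(A)$, which gives the inequality $\le$ between the two transcendence degrees.

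For the reverse inclusion I would argue that $\C(A) \subseteq \C(\{f_i, T_{i,j}(A) : 1\le i<j\le n\})$, i.e. that every entry $g_{i,j}$ of $A$ can be recovered as a rational function of the $f_i$ and the eigencoordinates. This is the content that needs the structure of the canonical basis. Fix a diagonal value $z$ and let $i_1 < \dots < i_k$ be the indices with $f_{i_r} = z$. The defining relations $A\vec v_{i_j} = z\vec v_{i_j} + \sum_{r<j} s_{i_r,i_j}\vec v_{i_r}$ of \eqref{eq:system}, once the canonical-basis normalization $t_{i_r, i_j} = 0$ is imposed, are linear equations expressing, row by row, the entries $g_{l, i_j}$ in terms of the $f$'s, the $t_{l,i_j}$ with $l$ not among the $i_r$, and the $s_{i_r,i_j}$; conversely these same equations, read in the other direction, express the relevant $g_{l,i_j}$ as polynomials in the $f$'s, the $t$'s, the $s$'s, and the inverses $\tfrac{1}{f_l - z}$ for $l$ outside the block. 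Running over all eigenvalues $z$ and over all columns recovers every off-diagonal entry of $A$, while the diagonal entries are the $f_i$ themselves. Care is needed to check that the recursion can always be ``inverted'': at each step the entry being solved for appears with a nonzero coefficient (either a nonzero eigenvalue difference $f_l - z$, or the leading coefficient $1$ coming from the pivot $1$ in the canonical vector $\vec v_{i_j}$), which is precisely why the systems in the proof of Lemma~\ref{canbas} had unique solutions in the first place. Granting this, $\C(A) = \C(\{f_i, T_{i,j}(A)\})$ as fields, and the two transcendence degrees coincide.

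The main obstacle, as I see it, is purely bookkeeping rather than conceptual: one must verify cleanly that the passage from ``$g$'s determine $t$'s and $s$'s'' to ``$t$'s and $s$'s (plus $f$'s) determine $g$'s'' goes through simultaneously for all the eigenvalue blocks, without circular dependencies — in other words, that the triangular structure of $A$ together with the ordering $i_1 < \dots < i_k$ gives a consistent order in which to solve for the $g_{i,j}$. Since the systems \eqref{eq:eq2}--\eqref{eq:eq4} (and their higher-$k$ analogues) are solved in a fixed order and each newly determined quantity is expressed in terms of previously determined ones together with entries of $A$ lying in ``earlier'' positions, reading the same chain backwards expresses each $g_{i,j}$ in terms of the eigencoordinates and the $f$'s in the reverse order; this is where I would spend a sentence or two making the indexing explicit, but no new idea is required beyond what already appears in Lemma~\ref{canbas} and Lemma~\ref{eigen1}.
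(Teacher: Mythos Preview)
Your proposal is correct and follows essentially the same route as the paper: both arguments establish the equality of fields $\C(A)=\C(\{f_i,T_{i,j}(A)\})$ by exploiting the recursive systems from the canonical-basis lemma to pass back and forth between the $g_{i,j}$ and the $T_{i,j}$. The paper packages the invertibility step more crisply by recording the explicit shape $T_{i,j}(A)=\frac{g_{i,j}-Q_{i,j}(A)}{f_i-f_j}$ when $f_i\neq f_j$ and $T_{i,j}(A)=g_{i,j}-Q_{i,j}(A)$ when $f_i=f_j$, with $Q_{i,j}$ depending only on entries $g_{s,t}$ with $(s,t)\neq(i,j)$ in the range $\min(i,j)\le s<t\le\max(i,j)$; this makes the triangular, hence invertible, nature of the change of variables immediate and dispenses with the ``reading the chain backwards'' bookkeeping you flag as the main obstacle.
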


\begin{proof} Consider the set $I_{i,j}(A)=\{(s,j): s\leq i$ or $ s\geq j,$ and $f_s\neq f_j  \}$. For notational convenience let us define the ring\begin{center}
		$R_{i,j}(A):= R_{i,j}(I_{i,j}(A))$.
	\end{center}
	
	From the proof of \thref{canbas} it follows that the $T_{i,j}$ are rational functions on the coordinates of the matrix such that\\
	
	(a) if $f_i\neq f_j$ then there exists a $Q_{i,j}\in R_{i,j}(A)$  such that \begin{center}$T_{i,j}(A)= \frac{g_{i,j}-Q_{i,j}(A)}{f_i-f_j}$.\end{center}
	
	(b) if $f_i=f_j$ then there exists $Q_{i,j}\in R_{i,j}(A)$ such that \begin{center}
		$T_{i,j}(A) = g_{i,j}-Q_{i,j}(A)$.\end{center}
	In other words the map $A\mapsto \diag(f_1,\ldots,f_n)+(T_{i,j}(A))$ is bijective and the coordinates are rational functions on the entries of $A$ with only factors of the form $f_i-f_j$ appearing in the denominator.
	
	The equality of the transcendence degrees in question then follows easily from the above remarks.\end{proof}

\begin{rmk}
	The proof of the above lemma actually gives us more. Namely from the proof it follows that \begin{center}
		$\C(A)=\C(\{f_i,T_{i,j}(A):1\leq i<j\leq n \})$.
	\end{center}
\end{rmk}

The next step here is to study the effects of the exponential function on the eigencoordinates. We record the main such results we will need in the following
\begin{prop}\thlabel{ecoord2} Let $A$ be as above and let $T_{i,j}(A)$ be its eigencoordinates. Then\begin{equation*}
	tr.d._\C \C(E(A)) =tr.d._\C \C(\{e^{f_i}, T_{i,j}(A):1\leq i<j\leq n\}).
	\end{equation*}
	From this and \thref{ecoord1} we conclude that\begin{equation*}tr.d._\C\C(A, E(A))=tr.d._\C\C(\{f_i,e^{f_i},T_{i,j}(A):1\leq i<j\leq n\}). \end{equation*}	   \end{prop}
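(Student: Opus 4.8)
The plan is to follow the distinct–eigenvalue argument of \thref{eigenfree}, replacing ``$A$ and $E(A)$ have the same eigenvectors'' by the statement that $A$ and $E(A)$ have the \emph{same canonical basis}, together with an explicit, invertible description of how $E$ transforms the nilpotent–operator data of $A$. First I would prove that the canonical basis $\vec v_1,\dots,\vec v_n$ of $A$ from \thref{canbas} is also the canonical basis of $E(A)$. Fix an eigenvalue $z=f_{i_1}=\cdots=f_{i_k}$ of $A$ with generalized eigenspace $M(z)$ and set $N=(A-zI_n)|_{M(z)}$. By \thref{canbas}(3) we have $N\vec v_{i_j}\in\mathrm{span}(\vec v_{i_1},\dots,\vec v_{i_{j-1}})$, so $N$ is strictly upper triangular, hence nilpotent, in the ordered basis $\vec v_{i_1},\dots,\vec v_{i_k}$. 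Now $E(A)$ commutes with $A$, hence preserves $M(z)$, and $E(A)|_{M(z)}=\exp(zI_n+N)=e^z\exp(N)$; since $N$ is nilpotent, $\exp(N)=I_n+N+\tfrac12N^2+\cdots$ is a finite sum, again strictly-upper-triangular-plus-identity in the basis $\vec v_{i_\bullet}$. Thus $E(A)\vec v_{i_j}=e^z\vec v_{i_j}+\tilde s_{i_1,i_j}\vec v_{i_1}+\cdots+\tilde s_{i_{j-1},i_j}\vec v_{i_{j-1}}$ for suitable scalars $\tilde s$, so the vectors $\vec v_i$ satisfy conditions (1)--(3) of \thref{canbas} for the matrix $E(A)$, conditions (1) and (2) being literally unchanged. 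Since the $f_i$ have no constant term, $e^{f_i}=e^{f_j}\iff f_i=f_j$, so $E(A)$ has the same pattern of eigenvalue multiplicities as $A$, and the uniqueness part of \thref{canbas} identifies $\vec v_\bullet$ as the canonical basis of $E(A)$. In particular $t_{i,j}(E(A))=t_{i,j}(A)$ for all $i<j$.

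Next I would record the transformation of the $s$-coordinates. With $N$ as above and $\tilde N:=(E(A)-e^zI_n)|_{M(z)}=e^z(\exp(N)-I_n)$, the $\tilde s_{i_l,i_r}$ are the matrix entries of $\tilde N$ in the basis $\vec v_{i_\bullet}$; since $\tilde N=e^z\big(N+\tfrac12N^2+\cdots\big)$ is a polynomial in $N$ with coefficients in $\Q[e^z]$, each $\tilde s_{i_l,i_r}$ is a polynomial in $e^z$ and the $s_{i_a,i_b}$, and conversely $N=\log(I_n+e^{-z}\tilde N)=e^{-z}\tilde N-\tfrac12e^{-2z}\tilde N^2+\cdots$ is a finite polynomial in $e^{-z}$ and the $\tilde s_{i_a,i_b}$ ($\tilde N$ being nilpotent). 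Writing $T_{i,j}(E(A))$ for the eigencoordinates built from the canonical basis of $E(A)$, the two preceding observations give $T_{i,j}(E(A))\in\C(\{e^{f_s},T_{a,b}(A)\})$ and $T_{i,j}(A)\in\C(\{e^{f_s},T_{a,b}(E(A))\})$ for all $i<j$ (using $e^{-f_i}=(e^{f_i})^{-1}$). Finally, the computations in the proof of \thref{ecoord1} are purely algebraic and apply verbatim to the upper triangular matrix $E(A)=\diag(e^{f_1},\dots,e^{f_n})+(E_{i,j}(A))$: the only denominators they introduce are the differences $e^{f_i}-e^{f_j}$, which are invertible exactly when $f_i\neq f_j$, i.e. precisely where the construction needs it. Hence $\C(E(A))=\C(\{e^{f_i},T_{i,j}(E(A))\})$. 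Since $e^{f_i}$ is a diagonal entry of $E(A)$, combining this with the mutual inclusions above gives $\C(E(A))=\C(\{e^{f_i},T_{i,j}(A)\})$, and taking $tr.d._\C$ proves the first equality.

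For the second equality, the proof of \thref{ecoord1} in fact establishes the field identity $\C(A)=\C(\{f_i,T_{i,j}(A)\})$ (the assignment $A\mapsto\diag(\vec f)+(T_{i,j}(A))$ is bijective with inverse rational in the $f_i$ and $T_{i,j}(A)$). Therefore $\C(A,E(A))$, which is the compositum $\C(A)\cdot\C(E(A))$, equals $\C(\{f_i,T_{i,j}(A)\})\cdot\C(\{e^{f_i},T_{i,j}(A)\})=\C(\{f_i,e^{f_i},T_{i,j}(A)\})$, and the transcendence-degree statement follows. I expect the only real difficulty to lie in the first paragraph: one must make precise that $E$ preserves each generalized eigenspace of $A$, acts there as a scalar times a unipotent operator, and respects the nested spans $\mathrm{span}(\vec v_{i_1},\dots,\vec v_{i_j})$, and then invoke the uniqueness clause of \thref{canbas} to conclude that the two canonical bases literally coincide. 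Once that is in place, everything else is the polynomial $\exp$/$\log$ bookkeeping on nilpotent operators together with a reuse of the algebra already carried out in \thref{ecoord1}, the ``no constant term'' hypothesis entering only to match multiplicities and to guarantee $e^{f_i}\neq e^{f_j}$ whenever $f_i\neq f_j$.
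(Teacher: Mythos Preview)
Your proposal is correct and follows essentially the same route as the paper: both show that the canonical basis of $A$ is also the canonical basis of $E(A)$, derive the relation $T_{i,j}(E(A))=T_{i,j}(A)$ for $f_i\neq f_j$ and an invertible polynomial transformation on the $s$-part, and then invoke \thref{ecoord1} applied to $E(A)$. The one presentational difference is that where the paper writes out the explicit formula $S_{i_t,i_j}=\sum \tfrac{1}{m!}s_{i_t,i_{l_1}}\cdots s_{i_{l_{m-1}},i_{l_m}}$ and inverts it by induction on $|t-l|$, you package the same content as the $\exp$/$\log$ bijection on nilpotent operators---which is exactly the argument the paper itself uses later in Part II (the ``Claim'' in the proof of \thref{trdataexp}); your version is a bit cleaner and yields field equalities directly rather than passing through two inequalities of transcendence degrees.
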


\begin{proof}\renewcommand{\qedsymbol}{}We start by fixing some notation. We let $R_{i,j}(A)$ be the rings defined in the proof of \thref{ecoord1}.

Since we have already dealt with the case where all eigenvalues are distinct, we only have to study the behaviour of the exponential with respect to the generalized eigenspaces of dimension greater than $1$. 
	
We assume that $z=f_{i_1}=\ldots\ f_{i_k}$ with $f_i\neq z$ if $i\notin \{i_1,\ldots,i_k\}$ so that the system \eqref{eq:system} actually describes the eigencoordinates of $A$. We start by looking at the effect of the exponential on the equations of \eqref{eq:system}. 
	
By induction and the definition of the exponential we get the following relation for the exponential matrix $E(A)$\begin{center}
		$E(A)\vec{v}_{i_j}=e^z\vec{v}_{i_j}+ e^z[s_{i_1,i_{j}}\vec{v}_{i_1}+\cdots+s_{i_{j-1},i_j}\vec{v}_{i_{j-1}}] +$
		\begin{equation}+e^z[S_{i_1,i_{j}}\vec{v}_{i_1}+\cdots+S_{i_{j-2},i_j}\vec{v}_{i_{j-2}}]\label{eq:eq7}
		\end{equation}
	\end{center}
	where $S_{i_t,i_j}:= \Sum{t<l_1<\ldots< l_m<j}{}\frac{1}{m!}s_{i_t,i_{l_1}}\cdots s_{i_{l_{k-1}},i_{l_k}}$.

Most importantly \eqref{eq:eq7} implies that 
\begin{equation}\label{eq:rel}T_{i,j}(E(A))=\begin{cases}e^z[T_{i,j}(A)+S_{i,j}(A)]& \text{if } f_i=f_j\\
T_{i,j}(A)& \text{if } f_i\neq f_j \end{cases},\end{equation} 
where the $S_{i,j}$ will again be elements of the ring $R_{i,j}$, due to the proof of \thref{ecoord1}, and can therefore be considered as functions on $A$. 

Assuming that $f_i=f_j$ we define $J_{i,j}(A)=\{i_1,\ldots,i_k\}$ and we also define the ring\begin{center}
	$D_{i,j}(A):=D_{i,j}(J_{i,j}(A))$.\end{center}

\textbf{Claim:} Let $i,j$ be such that $f_i=f_j$. Then there exists $P_{i,j}\in D_{i,j}(A)$ such that \begin{equation}
T_{i,j}(E(A))=e^{z}(T_{i,j}(A))+P_{i,j}(E(A)).\label{eq:sxesh}
\end{equation} 

Assuming this claim we go about proving that the transcendence degrees in question are in fact equal. First we define the following fields:\begin{center}
	 $K_1=\C(\{e^{f_i},T_{i,j}(A):1\leq i<j\leq n\})$, $K_0=\C(E(A))$, and
	 
$L=\C(\{E(A),T_{i,j}(E(A)),T_{i,j}(A):1\leq i<j\leq n\})$.
\end{center}
We then have from the equations \eqref{eq:rel}, for the case $f_i\neq f_j$, and \eqref{eq:sxesh}, for the case $f_i=f_j$, that the extension $L/K_0(\{T_{i,j}(E(A)):1\leq i<j\leq n\})$ is algebraic. From the proof of \thref{ecoord1}, applied to the matrix $E(A)$, we get that $K_0(\{T_{i,j}(E(A)):1\leq i<j\leq n\})=K_0$. So $tr.d._{\C}L=tr.d._{\C}K_0$.

On the other hand, again from the proof of \thref{ecoord1}, we have that $L=K_1(\{T_{i,j}(E(A)):1\leq i<j\leq n  \})$. While \eqref{eq:rel}, together with the definition of $S_{i,j}$, tells us that the extension $K_1(\{T_{i,j}(E(A)):1\leq i<j\leq n  \})/K_1$ is algebraic. So that $tr.d._{\C}L=tr.d._{\C}K_1$ and the result follows.\end{proof}

\begin{proof}[Proof of the Claim] We assume we are in the same situation as above. Namely we assume that $z=f_{i_1}=\ldots\ f_{i_k}$ with $f_i\neq z$ if $i\notin \{i_1,\ldots,i_k\}$, so \eqref{eq:eq7} holds for all $1\leq j\leq k$. In particular, we need to show that, for all pairs $1\leq t< j\leq k$, there exists $P_{i_t,i_j}\in D_{i_t,i_j}(A)$ such that $T_{i_t,i_j}(E(A))=e^{z}(T_{i_t,i_j}(A))+P_{i_t,i_j}(E(A))$. By \eqref{eq:rel} it suffices to show the existence of a $P_{i_t,i_j}\in D_{i_t,i_j}(A)$ such that $S_{i_t,i_j}=P_{i_t,i_j}(E(A))$. From the definition of $S_{i,j}$ it suffices to prove that for all pairs $1\leq t< j\leq k$, there exists $F_{i_t,i_j}\in D_{i_t,i_j}(A)$ such that $s_{i_t,i_j}(A)=F_{i_t,i_j}(E(A))$. We prove this last assertion by induction on $j-t$.
	
Let us start with $j-t=1$. As a consequence of \eqref{eq:eq7} we have that $T_{i_{j-1},i_j}(E(A))=e^{z}T_{i_{j-1},i_j}(A)$ for all $j$. We can rewrite this as\begin{equation}\label{eq:eq11}s_{i_{j-1},i_j}(A)=e^{-z}s_{i_{j-1},i_j}(E(A)).\end{equation}The assertion now follows from the proof of \thref{ecoord1} applied to the matrix $E(A)$ that shows $s_{i_{j-1},i_j}(E(A))=E(A)_{i_{j-1},i_j}-Q_{i_{j-1},i_j}(E(A))$, where $Q_{i_{j-1},i_j}\in R_{i_{j-1},i_j}(A)$.

For $t=j-2$ the definition of $S_{i,j}$ and \eqref{eq:rel} imply\begin{center}
		$s_{i_{j-2},i_j}(E(A))= e^zs_{i_{j-2},i_j}(A)+e^zs_{i_{j-2},i_{j-1}}(A)s_{i_{j-1},i_j}(A)$.\end{center}This together with \eqref{eq:eq11} imply
	\begin{equation}\label{eq:eq12}
		s_{i_{j-2},i_j}(A)=e^{-z}(s_{i_{j-2},i_j}(E(A)))-e^{-2z}s_{i_{j-2},i_{j-1}}(E(A))s_{i_{j-1},i_j}(E(A)).
	\end{equation}Once again the assertion follows as above from the proof of \thref{ecoord1}.

Assume the assertion for all pairs $(i_x,i_y)$ with $y-x\leq m$. Let $j=t+m+1$. Then, by definition of $S_{i,j}$ and the inductive hypothesis, we get that there exists $P_{i_t,i_j}\in D{i_t,i_j}(A)$ such that $S_{i_t,i_j}=P_{i_t,i_j}(E(A))$. 

From \eqref{eq:rel} we get that $s_{i_t,i_j}(A)=F_{i_t,i_j}(E(A))=e^{-z}(s_{i_t,i_j}(E(A))-P_{i_t,i_j}(E(A)))$. Once again by the proof of \thref{ecoord1} applied to $E(A)$ as above the assertion follows and the claim has been proven.\end{proof}
\section{Ax-Schanuel for $U_n$}
 
At this point we are able to state and prove a Weak Ax-Schanuel-type result for the map $E:\mathfrak{h}_n\rightarrow U_n$. We also record a corollary of our result, as well as an alternate geometric view in the spirit of \cite{pilafun}.

As we have been doing so far, we let $E_{i,j}(A)$ be the corresponding entry of the matrix $E(A)$ for $1\leq i<j\leq n$.

\begin{theorem}[Weak Ax-Schanuel for $U_n$]\thlabel{asupper} Let $f_1,\ldots,f_n,g_{i,j}\in \mathbb{C}[[t_1,\ldots,t_m]]$ be power series, where $1\leq i<j\leq n$. We assume that the $f_i$ do not have a constant term. Let $A$ be the $n\times n$ upper triangular matrix with diagonal $\vec{f}$ and the $(i,j)$ entry equal to $g_{i,j}$. Then, assuming that the $f_i$ are $\mathbb{Q}-$linearly independent,\begin{center}
		$tr.d._\mathbb{C}\mathbb{C}(A,E(A))\geq n+\rank(J(\vec{f},\vec{g};\vec{t}))$.\end{center}
\end{theorem}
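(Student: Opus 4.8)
The plan is to reduce \thref{asupper} to \thref{asmainred} using the eigencoordinate machinery built in \S\ref{tritosection}. The key point is that \thref{ecoord2} already tells us that
\[
tr.d._\C\C(A,E(A)) = tr.d._\C\C(\{f_i, e^{f_i}, T_{i,j}(A) : 1\leq i<j\leq n\}),
\]
so the left-hand side of the inequality we want is exactly a transcendence degree of the shape appearing in \thref{asmainred}, with the eigencoordinates $T_{i,j}(A)$ playing the role of the $g_j$. The first step, therefore, is to record this identification cleanly and to set $k = \binom{n}{2}$, writing $(g_1,\ldots,g_k)$ for an enumeration of the $T_{i,j}(A)$.

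The second step is to handle the hypothesis of \thref{asmainred}: it requires the $f_i$ to be $\Q$-linearly independent modulo $\C$, whereas \thref{asupper} only assumes this. So at first glance the hypotheses match — but the subtlety is that \thref{asupper} allows the $f_i$ to repeat or to satisfy $\Q$-linear relations among a subset, as long as no nontrivial $\Q$-linear combination is constant; on closer reading the hypothesis "$\Q$-linearly independent modulo $\C$" in \thref{asmainred} is the same condition, so I would simply check that the two phrasings agree (a $\Q$-linear combination of the $f_i$ lies in $\C$ iff, since the $f_i$ have no constant term, it is $0$; hence "independent modulo $\C$" forces genuine $\Q$-linear independence, and conversely). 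If a discrepancy remains — e.g.\ if in \thref{asupper} we genuinely want to permit dependencies — I would pass to a maximal $\Q$-linearly independent subset $f_{i_1},\ldots,f_{i_N}$ with $N = \dim_\Q\langle f_1,\ldots,f_n\rangle$, express every other $f_j$ as a $\Q$-linear combination of these (so that $e^{f_j}$ is, up to a nonzero constant, a monomial in the $e^{f_{i_\ell}}$), absorb those $f_j$ and $e^{f_j}$ into the transcendence field at no cost to the left side, and apply \thref{asmainred} with $n$ replaced by $N$.

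The third step is to identify the two Jacobian ranks. We must show
\[
\rank J(\vec f, \vec T; \vec t) = \rank J(\vec f, \vec g; \vec t),
\]
where $\vec g = (g_{i,j})$ are the original off-diagonal entries of $A$ and $\vec T = (T_{i,j}(A))$ are the eigencoordinates. This follows from the structural fact, extracted in the proofs of \thref{canbas} and \thref{ecoord1}, that the assignment $A \mapsto \diag(\vec f) + (T_{i,j}(A))$ is a birational change of coordinates: each $T_{i,j}$ is a rational function of $\vec f$ and the $g_{s,r}$ with $\min(i,j)\le s<r\le\max(i,j)$ (denominators only of the form $f_s - f_j$), and conversely each $g_{i,j}$ is recovered rationally from $\vec f$ and the $T_{s,r}$. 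Since this change of variables fixes the $f_i$ and is invertible over the function field, the Jacobian matrix of $(\vec f, \vec T)$ with respect to $\vec t$ and that of $(\vec f, \vec g)$ differ by multiplication by an invertible matrix over $\C(A)$, hence have the same rank; equivalently, $\C(\vec f, \vec g) = \C(\vec f, \vec T)$ as subfields of the fraction field of $\C[[t_1,\ldots,t_m]]$, and the rank of the Jacobian is just the transcendence degree of this field over $\C$, which is basis-independent. Combining the three steps: $tr.d._\C\C(A,E(A)) = tr.d._\C\C(\{f_i,e^{f_i},T_{i,j}(A)\}) \geq N + \rank J(\vec f,\vec T;\vec t) = N + \rank J(\vec f,\vec g;\vec t)$, which is the claim (with $N = n$ under the stated hypothesis).

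I expect the main obstacle to be purely bookkeeping: verifying carefully that the denominators $f_s - f_j$ introduced when solving the systems \eqref{eq:eq1}–\eqref{eq:eq4} for the $T_{i,j}$ are genuinely nonzero (this uses that we only divide by $f_s - f_j$ when $f_s \neq f_j$, which is exactly how the eigencoordinates were defined — $t_{i,j}$ when $f_i\neq f_j$, $s_{i,j}$ when $f_i = f_j$), and hence that the birational equivalence $A \leftrightarrow \diag(\vec f)+(T_{i,j}(A))$ really is defined and invertible over $\C(A)$. Once that is in hand the rank equality and the reduction are immediate, and the transcendence inequality is exactly \thref{asmainred}.
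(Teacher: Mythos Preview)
Your approach is correct and essentially the same as the paper's: invoke \thref{ecoord2} to rewrite $tr.d._\C\C(A,E(A))$ and then apply \thref{asmainred}. The only difference is a minor detour. You feed the eigencoordinates $T_{i,j}(A)$ into \thref{asmainred} and then argue separately that $\rank J(\vec f,\vec T;\vec t)=\rank J(\vec f,\vec g;\vec t)$ via the birational change of coordinates. The paper instead uses that same field equality $\C(\{f_i,T_{i,j}\})=\C(\{f_i,g_{i,j}\})$ (from the proof of \thref{ecoord1}) \emph{before} invoking \thref{asmainred}: adjoining the $e^{f_i}$ to both sides gives $tr.d._\C\C(\{f_i,e^{f_i},T_{i,j}\})=tr.d._\C\C(\{f_i,e^{f_i},g_{i,j}\})$, and one then applies \thref{asmainred} directly with the \emph{original} $g_{i,j}$, so no Jacobian-rank comparison is needed at all. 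Your second step is also unnecessary: the hypothesis ``$\Q$-linearly independent modulo $\C$'' is stated identically in \thref{asupper} and \thref{asmainred}, so there is nothing to reconcile.
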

\begin{proof}From \thref{eigenfree} we may replace the left hand side of the above inequality by $tr.d._\C\C(\{f_i,g_{i,j},e^{f_i}:1\leq i<j\leq n\})$. This reduces the proof to \thref{asmainred}, by giving the $g_{i,j}$ a new indexing $g_k$, $1\leq k\leq \frac{n(n-1)}{2}$.\end{proof}

Replacing \thref{eigenfree} with \thref{ecoord2} in the above proof yields the following
\begin{cor}\thlabel{corasupp}Let $f_1,\ldots,f_n,g_{i,j}\in \mathbb{C}[[t_1,\ldots,t_m]]$ be power series, where $1\leq i<j\leq n$. We assume that the $f_i$ do not have a constant term. Let $A$ be the $n\times n$ upper triangular matrix with diagonal $\vec{f}$ and the $(i,j)$ entry equal to $g_{i,j}$. Let also $N=\dim_\Q\langle f_1,\ldots,f_n\rangle_{\Q}$, then \begin{center}
	$tr.d._\mathbb{C}\mathbb{C}(A, E(A))\geq N+\rank(J(\vec{f},\vec{g};\vec{t}))$.\end{center}
\end{cor}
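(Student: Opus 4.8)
The plan is to deduce this from \thref{asmainred} by passing to a maximal $\Q$-linearly independent sub-collection of the diagonal entries. Since the $f_i$ have no constant term, any $\Q$-linear combination of them that lies in $\C$ must be $0$; hence $\dim_\Q\langle f_1,\dots,f_n\rangle=N$ is also the dimension modulo $\C$, and after reordering the indices $1,\dots,n$ we may assume $f_1,\dots,f_N$ are $\Q$-linearly independent modulo $\C$ while for each $i>N$ there is a relation $f_i=\sum_{\ell=1}^N q_{i,\ell}f_\ell$ with $q_{i,\ell}\in\Q$ — the constant term on the right must vanish because it does on the left, so this is an honest linear identity. Note that $f_1,\dots,f_N$ still have no constant term, so this sub-collection satisfies the hypotheses of \thref{asmainred}.

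First I would apply \thref{ecoord2} (together with \thref{ecoord1}, which gives $\C(A)=\C(\{f_i,g_{i,j}\})$) to rewrite $tr.d._\C\C(A,E(A))=tr.d._\C\C(\{f_i,g_{i,j},e^{f_i}:1\le i<j\le n\})$. Next I would show that this field is algebraic over $L:=\C(\{f_\ell,e^{f_\ell}:1\le\ell\le N\}\cup\{g_{i,j}:1\le i<j\le n\})$: for $i>N$ we have $f_i=\sum_{\ell\le N}q_{i,\ell}f_\ell\in\C(f_1,\dots,f_N)\subseteq L$ because $q_{i,\ell}\in\Q\subset\C$, and, writing $q_{i,\ell}=a_{i,\ell}/b$ over a common denominator $b\in\N$, the element $e^{f_i}$ satisfies $(e^{f_i})^{b}=\prod_{\ell=1}^N(e^{f_\ell})^{a_{i,\ell}}\in L$ — the $e^{f_\ell}$ being units, negative exponents cause no trouble — so $e^{f_i}$ is algebraic over $L$. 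As $\C(\{f_i,g_{i,j},e^{f_i}\})$ is obtained from $L$ by adjoining finitely many such algebraic elements, $tr.d._\C\C(A,E(A))=tr.d._\C L$.

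Now I would apply \thref{asmainred} with $f_1,\dots,f_N$ in the role of the $f_i$ and the $g_{i,j}$, $1\le i<j\le n$, re-indexed as $g_1,\dots,g_k$ with $k=\tfrac{n(n-1)}{2}$, in the role of the $g_j$; this yields $tr.d._\C L\ge N+\rank J\big((f_1,\dots,f_N),\vec g;\vec t\big)$. Finally I would observe that the Jacobian of $(f_1,\dots,f_N,\vec g)$ and that of $(f_1,\dots,f_n,\vec g)$ have equal rank: differentiating $f_i=\sum_{\ell\le N}q_{i,\ell}f_\ell$ shows that for $i>N$ the row $(\partial f_i/\partial t_\mu)_\mu$ equals $\sum_{\ell\le N}q_{i,\ell}(\partial f_\ell/\partial t_\mu)_\mu$, a $\Q$-combination of rows already present, so adjoining these rows does not change the rank. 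Chaining the (in)equalities gives $tr.d._\C\C(A,E(A))\ge N+\rank J(\vec f,\vec g;\vec t)$.

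The only step needing real care is the algebraicity claim in the second paragraph — namely that each $e^{f_i}$ with $i>N$ is \emph{algebraic} over $L$, not merely contained in some transcendental extension of it; this is exactly where rationality of the $q_{i,\ell}$ (to clear denominators) and invertibility of exponentials of power series are used. The remaining ingredients — reordering indices, the no-constant-term remark upgrading "dependent modulo $\C$" to an honest linear relation, and the fact that $\Q$-dependent rows do not raise the rank of a Jacobian — are routine bookkeeping.
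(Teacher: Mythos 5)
Your proof is correct and follows essentially the route the paper intends: the paper states Corollary \thref{corasupp} as an immediate consequence of \thref{asupper} (i.e.\ of \thref{ecoord2} plus \thref{asmainred}), obtained exactly as you do by passing to a maximal $\Q$-linearly independent subfamily of the $f_i$ (independence modulo $\C$ being automatic since the $f_i$ have no constant term), treating the dependent $f_i$ and their exponentials as algebraic over that subfamily, and observing that the extra Jacobian rows are $\Q$-combinations of existing ones. The only comment is that your equality $tr.deg_\C\C(A,E(A))=tr.deg_\C L$ is more than is needed: since $L\subseteq \C(A,E(A))$, the containment alone gives the required lower bound, so the careful algebraicity step for $e^{f_i}$, $i>N$, while correct, can be skipped.
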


\textbf{An Alternate Formulation\\}

In the spirit of \cite{pilafun} we can give an alternate form of \thref{asupper}. This time the background is slightly changed. We let $V\subset \mathfrak{h}_n$ be an open subset and $X\subset V$ an irreducible complex analytic subvariety of $V$ with $dim_\C(X)=m$ such that $X$ contains the origin, and locally at $\vec{0}$ the coordinate functions, $f_1,\ldots,f_n$ and $g_{s,t}$ for $s<t$, are meromorphic functions on $X$.

For reasons of convenience, and in keeping a similar notation to the previous version, we let $A=(\vec{f})+(g_{s,t})$ denote the matrix corresponding to the coordinates $f_i$ and $g_{i,j}$.\\

\begin{theorem}[Weak Ax-Schanuel-Alternate Formulation]\thlabel{aslast}In the above context, if the $f_i$ are $\mathbb{Q}-$linearly independent modulo $\mathbb{C}$, then 
	\begin{center}$tr.d._\mathbb{C}\mathbb{C}(A,E(A))\geq n+\dim(X)$.\end{center}
\end{theorem}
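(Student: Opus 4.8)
The plan is to reduce \thref{aslast} to the power-series version \thref{asupper} by a standard argument in the style of \cite{pilafun}, replacing the analytic/meromorphic picture by a formal one via an appropriate change of coordinates and a completion at a point. First I would pick a smooth point $p$ of $X$ near which all the coordinate functions $f_i$, $g_{i,j}$, $e^{f_i}$, and $E_{i,j}(A)$ are holomorphic (shrinking $X$ and $V$ if necessary, this is possible since meromorphic functions are holomorphic off a proper analytic subset), and after translating assume $p$ corresponds to the origin in $\mathfrak{h}_n$. The hypothesis $\dim(X) = \rank(D_{i,j}(g_{s,t}))$ over the meromorphic function field means that, at a generic such $p$, the map $X \to \mathbb{C}^m$ given by a suitable collection of $\dim(X)$ of the derivations (equivalently, by a local chart $t_1,\ldots,t_m$ with $m = \dim X$) is a local biholomorphism onto a neighbourhood $B \subset \mathbb{C}^m$ of the origin; pulling back, the $f_i$, $g_{i,j}$ become convergent power series in $t_1,\ldots,t_m$ with no constant term, and the $e^{f_i}$, $E_{i,j}(A)$ are then precisely the entries of $E(A)$ for that matrix $A$, because the hypothesis that the $E_{s,t}$ satisfy the expected differential equations for the $D_{i,j}$ forces the analytically-given functions to coincide with the genuine exponential (both satisfy the same ODE system with the same value at the base point).

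Once this translation is done, the quantities match up directly: $\rank(D_{i,j}(g_{s,t})) = \rank J(\vec f, \vec g; \vec t) = \dim(X)$ by construction of the chart, and $tr.d._\mathbb{C} \mathbb{C}(A, E(A))$ computed with the analytic functions equals the same transcendence degree computed with the power series, since the two sets of functions generate the same subfield of the field of germs at $p$. The $\mathbb{Q}$-linear independence of the $f_i$ modulo $\mathbb{C}$ is preserved verbatim. Therefore \thref{asupper} applied to this matrix $A$ with entries in $\mathbb{C}[[t_1,\ldots,t_m]]$ yields
\begin{center}
$tr.d._\mathbb{C} \mathbb{C}(A, E(A)) \geq n + \rank J(\vec f, \vec g; \vec t) = n + \dim(X)$,
\end{center}
which is exactly the desired inequality.

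The main obstacle is the justification that the analytically-prescribed functions $E_{i,j}(\diag(\vec f) + (g_{s,t}))$ and $e^{f_i}$ really are the entries of the honest exponential of the matrix $A$ once we have chosen the chart $t_1,\ldots,t_m$: this is where the hypothesis "the $E_{s,t}$ satisfy the expected differential equations for $D_{i,j}$" does the work, and one must check that these differential equations, together with the normalization at the base point (value $I_n$ at the origin, since the $f_i$ and $g_{i,j}$ vanish there), characterize the exponential uniquely — a uniqueness-of-solutions argument for the linear system $dE = E\, dA$ along the image of $B$. A secondary technical point is the passage from "$\rank$ over the meromorphic function field on $X$" to an actual local coordinate system realizing that rank, which is routine: one selects $\dim(X)$ of the $g_{s,t}$ (or linear combinations thereof) that are algebraically independent and whose differentials are independent at a generic point, and uses them — or the dual derivations — as the $t_\mu$. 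Neither of these is deep, so the proof is short; the substance is entirely contained in \thref{asupper} and hence ultimately in \thref{asmainred}.
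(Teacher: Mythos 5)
The paper itself states \thref{aslast} with no proof, treating it as a routine reformulation of \thref{asupper}, so your plan --- pull everything back to power series and quote \thref{asupper} --- is exactly the intended route; but your execution has a genuine gap at the point where you arrange for the power series to have no constant term. You pick a generic smooth point $p$ of $X$ and then ``translate so that $p$ corresponds to the origin of $\mathfrak{h}_n$'', i.e.\ you replace $A$ by $A-A(p)$. For the matrix exponential this is not harmless: $A$ and the constant matrix $A(p)$ need not commute, so $E(A-A(p))$ is not an algebraic expression in $E(A)$ and constants, and the two families of functions do not generate the same subfield of germs at $p$, contrary to what you assert. Applying \thref{asupper} to the translated matrix therefore bounds $tr.d._\C\C(A-A(p),E(A-A(p)))$, which is not the quantity $tr.d._\C\C(A,E(A))$ that \thref{aslast} is about. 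Nor can you simply wave away the no-constant-term hypothesis of \thref{asupper} as a normalization, as one does for the classical abelian exponential: it is used essentially in the eigencoordinate machinery (\thref{canbas}, \thref{ecoord2}) to ensure that $f_i=f_j$ exactly when $e^{f_i}=e^{f_j}$, i.e.\ that $A$ and $E(A)$ have matching eigenvalue multiplicities, so the hypothesis must genuinely be met.

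The repair is to base the argument at the origin itself, which the ambient context already places on $X$: there all the $f_i,g_{i,j}$ vanish and the functions $E_{s,t}$ take the entries of $I_n$ as values, so no translation is needed. The price is that the origin need not be a smooth point of $X$, so the ``local biholomorphism onto $B\subset\C^m$'' you invoke is not available there; instead one needs a holomorphic parametrization $\gamma\colon(B,0)\to(X,0)$ of an irreducible component of the germ of $X$ at the origin, of generic rank $\dim X$ (obtained, e.g., from a normalization or resolution of the germ). Pulling back along $\gamma$ produces $f_i\circ\gamma$, $g_{i,j}\circ\gamma$ with no constant term, embeds the field generated by the functions in question so that transcendence degrees are unchanged, and, by the comparison with the derivations $D_{i,j}$ that you sketch, gives $\rank J(\vec f,\vec g;\vec t)=\dim X$. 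With that substitution the rest of your argument --- identifying the $E_{s,t}$ with the honest exponential via the differential equations together with the value $I_n$ at the origin, then quoting \thref{asupper} --- does yield the stated inequality.
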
\begin{proof}
 We choose  $t_1,\ldots,t_m$ that are independent holomorphic coordinates on $X$ locally at $\vec{0}$ so that $\rank(J(\vec{f},\vec{g};\vec{t}))=dim_\C(X)$. Then this reduces to \thref{asupper}.
\end{proof}

Also similarly to above we can translate \thref{corasupp} in this context.
\begin{cor}\thlabel{corasupalt}In the above context, if the $f_i$ are are such that $N=\dim_\Q\langle f_1,$ $\ldots$ $,f_n\rangle_{\Q}$, then 
	\begin{center}$tr.d._\mathbb{C}\mathbb{C}(A,E(A))\geq N+\dim(X)$.\end{center}\end{cor}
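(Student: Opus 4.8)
The plan is to deduce \thref{aslast} and \thref{corasupalt} from the already-established \thref{asupper} and \thref{corasupp} by converting the geometric hypotheses on $X$ into the power-series setup of those theorems via the Seidenberg embedding theorem, exactly as was done for \thref{asmainred}. First I would observe that the statement is local at the origin: since $X$ is a complex-analytic subvariety of the open set $V\subset\mathfrak{h}_n$ containing $\vec0$, and the coordinate functions $f_i, g_{i,j}$ are meromorphic on $X$, I can pass to a smooth point or to the germ of $X$ at a suitable point and parametrize a neighbourhood of it by a polydisc $B\subset\C^m$ with $m=\dim X$, pulling back all the data to convergent power series $f_i,g_{i,j}\in\C\{t_1,\dots,t_m\}\subset\C[[t_1,\dots,t_m]]$ with no constant term. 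The hypothesis $\dim(X)=\rank(D_{i,j}(g_{s,t}))$ together with the assumption that the $E_{s,t}$ satisfy the expected differential equations for the $D_{i,j}$ is precisely what guarantees that, after this parametrization, $e^{f_i}$ is the genuine exponential of $f_i$ and that $E_{i,j}(A)$ is the genuine $(i,j)$ entry of the matrix exponential $E(A)$, so that the field $\C(A,E(A))$ in the geometric statement agrees with the field $\C(A,E(A))$ of \thref{asupper}.

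The second step is the translation of the two invariants. On one side, $\dim_\C X = \rank(D_{i,j}(g_{s,t}))$ becomes $\rank J(\vec f,\vec g;\vec t)$ once we choose the $t$-coordinates on the polydisc so that the derivations $D_{i,j}$ correspond to $\partial/\partial t_\mu$; more precisely, one should remark that $\rank J(\vec f,\vec g;\vec t)$ is independent of the choice of parametrizing coordinates and equals the transcendence degree of $\C(A)$ over $\C$, which is an intrinsic invariant of $X$ equal to its dimension. On the other side, $tr.deg_\C\C(A,E(A))$ is literally the same quantity in both formulations. Hence \thref{asupper} applied to these convergent power series gives $tr.deg_\C\C(A,E(A))\ge n+\rank J(\vec f,\vec g;\vec t)=n+\dim X$, which is \thref{aslast}; and \thref{corasupp} applied in the same way gives $tr.deg_\C\C(A,E(A))\ge N+\dim X$, which is \thref{corasupalt}. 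The $\Q$-linear independence modulo $\C$ of the $f_i$ (resp.\ the value $N=\dim_\Q\langle f_1,\dots,f_n\rangle$) transfers verbatim, since it is a condition on the power series $f_i$ themselves.

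The step I expect to require the most care is verifying that the abstract differential-algebraic hypotheses — the existence of derivations $D_{i,j}$ with $\dim X=\rank(D_{i,j}(g_{s,t}))$ and the requirement that the $E_{s,t}$ ``satisfy the expected differential equations'' — really do force, after parametrization, the analytic identity $E(A)=\exp(A)$ entrywise and $e^{f_i}=\exp(f_i)$ on the diagonal. Concretely, one should write the matrix ODE $dE(A)=E(A)\,dA$ (equivalently $dA = E(A)^{-1}dE(A)$, using upper-triangularity so that these products make sense entrywise) implied by the expected differential equations, restrict it along the chosen parametrization $B\to X$, and invoke uniqueness of solutions of analytic ODEs with the initial condition $E(A)|_{\vec0}=I_n$ coming from $\vec0\in X$ and $\vec0\in\mathfrak h_n$ mapping to $I_n$. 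Once this identification is in place, the rest is the bookkeeping described above, and no new transcendence input beyond \thref{asupper} and \thref{corasupp} is needed.
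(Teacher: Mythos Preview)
Your approach is exactly the paper's: it gives no proof at all for \thref{corasupalt}, simply stating that one ``translates \thref{corasupp} in this context,'' and your parametrize-by-a-polydisc, pull-back-to-power-series, apply-\thref{corasupp} plan is precisely the content of that translation.

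One caveat on the step you flagged as delicate: the matrix ODE you write, $dE(A)=E(A)\,dA$, is not the correct derivative formula for the matrix exponential when $A$ and $dA$ fail to commute (the true formula involves $\int_0^1 e^{sA}\,dA\,e^{(1-s)A}\,ds$), so your uniqueness argument as written would not go through. This does not actually harm your strategy, because in the paper's setup the clause ``the $E_{s,t}$ satisfy the expected differential equations'' is meant to be read as the hypothesis that $E(A)$ \emph{is} the analytic matrix exponential of $A$; there is nothing to prove there, only a definition to unpack.
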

This form of the Ax-Schanuel result is the one we will use in what follows. 

\section{Weakly Special Subvarieties for $\mathfrak{h}_n$}\label{wesp}

We turn our attention to describing the weakly special subvarities of $\mathfrak{h}_n$ that contain the origin, i.e. the zero matrix. Geometrically \thref{ecoord2} gives us a significant amount of motivation. We can expect that the weakly special subvarieties will be determined by the following information: \begin{enumerate}
	\item A system $\Sigma$ of $\Q-$linear equations on the diagonal coordinates of the matrices, i.e. the eigenvalues, and
	\item A system of equations on the eigen-coordinates of a generic matrix in the subvariety of $\mathfrak{h}_n$ defined by the system $\Sigma$.
\end{enumerate}

\subsection*{Conditions on Eigenvalues}

We start by making this idea more explicit. Let us assume that we have a $\Q-$linearly independent set $\Sigma$ of $\Q-$linear polynomials on the diagonal coordinates of $\mathfrak{h}_n$, i.e. polynomials of the form \begin{center}
	$F(\vec{f})=\Sum{i=1}{n}q_i f_i$,
\end{center}where $q_i\in \Q$. Let $Z(\Sigma)\subset \mathfrak{h}_n$ be the algebraic subvariety of $\mathfrak{h}_n$ defined by $\Sigma$. 

Picking a generic matrix $A\in Z(\Sigma)$ the multiplicities of the eigenvalues will be determined by $\Sigma$. More specifically, depending on $\Sigma$ we have fixed multiplicities of eigenvalues on a dense open subset of $Z(\Sigma)$, which we denote by $U_\Sigma$. To define this latter subset we start by considering the set \begin{center}
$I_\Sigma=\{(i,j): 1\leq i<j\leq n, Z(\Sigma)\subset Z(f_i-f_j) \} $.
\end{center}Then we take
\begin{center}
$U_\Sigma= Z(\Sigma)\backslash (\underset{(i,j)\notin I_\Sigma}{\bigcup} Z(f_i-f_j)) $.
\end{center}

\subsection*{Passing to the eigencoordinates}

Let us restrict our attention to $U_\Sigma$. On this dense open subset we can define, by \thref{canbas} a canonical basis for any matrix $A\in U_\Sigma$. The eigen-coordinates of $A$ will be well defined regular functions on $U_\Sigma$, thanks again to the proof of \thref{canbas}.

The relations proven during the proof of \thref{ecoord2} reinterpreted geometrically show that any algebraic relation satisfied by the $T_{i,j}(A)$ will translate to an algebraic relation for the $T_{i,j}(E(A))$ and vice versa. In order to translate this into a geometric language we must first find a more convenient description for $Z(\Sigma)$ and $U_\Sigma$.

We start by noting that we have an isomorphism \begin{center}
	$Z(\Sigma)\cong L\times \mathbb{A}_\C^{\frac{n(n-1)}{2}}$,
\end{center} where $L$ is a $\Q-$linear subspace of $\C^k$ where $k$ is the number of generically distinct eigenvalues of $Z(\Sigma)$. Let us also consider the dense open subset $D_\Sigma\subset \C^k$ with
\begin{center}
	$D_\Sigma=\C^k\backslash (\underset{1\leq i<j\leq k}{\bigcup}Z(x_i-x_j))$,
\end{center}where $x_i$ denote the coordinates of $\C^k$.

Combining the above, we consider $L'=L\cap D_\Sigma$ under this identification. We may then take the following isomorphism\begin{equation}
U_\Sigma\cong L'\times \mathbb{A}_\C^{\frac{n(n-1)}{2}}\label{eq:iso}.\end{equation}

At this point we apply \thref{ecoord1}, which shows that we can change coordinates on the $\mathbb{A}_\C^{\frac{n(n-1)}{2}}$ part of the right hand side of the above isomorphism from $g_{i,j}$ to $T_{i,j}$. In other words, we have an isomorphism\begin{equation}
	T_\Sigma:U_\Sigma\rightarrow L'\times \mathbb{A}_\C^{\frac{n(n-1)}{2}}\label{eq:iso2},
\end{equation}where the $\mathbb{A}_\C^{\frac{n(n-1)}{2}}$ on the right signifies the affine space of the eigencoordinates $T_{i,j}$.

\subsection*{Conditions on the eigencoordinates}

 Let $V\subset \mathbb{A}_\C^{\frac{n(n-1)}{2}}$ be an irreducible subvariety of $\mathbb{A}_\C^{\frac{n(n-1)}{2}}$ that contains the origin, where the latter is considered as the space of the eigencoordinates. Then if we consider $W=L'\times V$ this will be an irreducible subvariety of $L'\times \mathbb{A}_\C^{\frac{n(n-1)}{2}}$. We now consider its inverse under the isomorphism $T_\Sigma$ of \eqref{eq:iso2}. Finally we consider the Zariski closure of the resulting set in $\mathfrak{h}_n$, which we will denote by $X(\Sigma, V)$. 

Notice that $X(\Sigma,V)$ satisfies exactly what we wanted, the diagonal coordinates are only subject to $\Q-$linear equations, any relation on the strictly upper triangular part comes from relations on the eigencoordinates, it is irreducible and it contains the origin.
\begin{defn}[Weakly Special Subvarieties]An irreducible subvariety $X$ of $\mathfrak{h}_n$ that contains the origin will be called \textbf{weakly special} if there exist:\begin{enumerate}
		\item a system $\Sigma$ of $\Q-$linear equations on the diagonal entries, and
		\item an irreducible subvariety $V$ defined as above, 
	\end{enumerate}such that $X=X(\Sigma,V)$, where the latter is as defined in the above discussion.\end{defn}
\section{Ax-Lindemann for $U_n$ and other corollaries}\label{saxlindup}

Here we record some corollaries of our Ax-Schanuel result. We start with a ``two-sorted version"\footnote{We are borrowing this term from the relative discussion in \cite{pilafun}.} of \thref{corasupalt} and then use that to prove the Ax-Lindemann result. The latter allows us to characterize the bi-algebraic subsets for the map $E$ that contain the origin. The exposition follows in the spirit of \cite{pilafun}. 

We start by defining the notion of a component.
\begin{defn} Let $W\subset \mathfrak{h}_n$ and $V\subset U_n$ be algebraic subvarieties. Then a \textbf{component} $X$ of $W\cap E^{-1}(V)$ will be a complex-analytically irreducible component of $W\cap E^{-1}(V)$.\end{defn}

The context in which we will be using our Ax-Schanuel result is the one described in \thref{aslast} and the discussion leading up to it.

\begin{theorem}[Two-sorted Weak Ax-Schanuel for $U_n$]\thlabel{atypint} Let $U\subset \mathfrak{h}_n$ be a weakly special subvariety, containing the origin, and set $X=E(U)$. Let $W\subset U$ and $V\subset X$ be algebraic subvarieties, with $0_n\in W$ and $I_n\in V$. If the component $C$ of $W\cap E^{-1}(V)$ that contains the origin is not contained in any proper weakly special subvariety of $U$ then \begin{center}
		$\dim_\C C\leq \dim_\C V+\dim_\C W-\dim_\C X$.\end{center} \end{theorem}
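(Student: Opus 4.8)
The plan is to adapt the standard derivation of an Ax–Lindemann/atypical-intersection statement from an Ax–Schanuel statement, as in \cite{pilafun}, working throughout in the eigencoordinate picture set up in \ref{tritosection} and \ref{wesp}. First I would reduce to the ``functional'' version: pick a smooth point $p$ of the component $C$ through the origin and choose local parameters $t_1,\dots,t_m$ (with $m=\dim_\C C$) so that the coordinate functions $f_i,g_{i,j}$ restrict to convergent power series on $C$ near $p$; then $\dim_\C C=\rank J(\vec f,\vec g;\vec t)$ and $tr.d._\C\C(A,E(A))\le \dim_\C\big(W\cap Zcl(E(C))\big)$ along $C$. Since $C\subset W\cap E^{-1}(V)$ we have $E(C)\subset V$, so the field generated by the entries of $A$ and $E(A)$ along $C$ is contained in the function field of $V\times_{X} (\text{something})$; more precisely $tr.d._\C\C(A,E(A))\le \dim_\C V$ is too weak, and instead one argues: the image of $C$ under $(\,\mathrm{id},E)$ lands in $W\cap E^{-1}(V)$ so its Zariski closure has dimension at most $\dim_\C W+\dim_\C V-\dim_\C X$ precisely when the intersection $W\cap E^{-1}(V)$ is ``typical,'' and the content of the theorem is that the hypothesis on weakly special subvarieties forces this.

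Concretely I would run the contrapositive/counting argument. Suppose $\dim_\C C > \dim_\C V+\dim_\C W-\dim_\C X$. Applying \thref{aslast} (the alternate formulation of Ax–Schanuel for $\mathfrak h_n$) to the parametrization of $C$: if the diagonal entries $f_i$ were $\Q$-linearly independent modulo $\C$ along $C$, we would get $tr.d._\C\C(A,E(A))\ge n+\dim_\C C$. On the other hand, because $C\subset W$ and $E(C)\subset V$, the transcendence degree $tr.d._\C\C(A,E(A))$ is bounded above by $\dim_\C W + \dim_\C V - \dim_\C X$ plus a correction accounting for the eigenvalue relations — here one uses \thref{ecoord2} to pass to the eigencoordinates $T_{i,j}$ and separate the ``eigenvalue'' coordinates $f_i$ (with their $\Q$-linear relations, governed by some system $\Sigma$) from the eigencoordinate part (governed by a subvariety of the affine space $\mathbb A_\C^{n(n-1)/2}$). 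Combining the lower bound $n+\dim_\C C$ with the upper bound yields $\dim_\C C\le \dim_\C V+\dim_\C W-\dim_\C X$, a contradiction; if instead the $f_i$ satisfy a nontrivial $\Q$-linear relation along $C$, that relation (together with whatever algebraic relations the eigencoordinates $T_{i,j}$ of a generic point of $C$ satisfy) cuts out a proper weakly special subvariety $X(\Sigma,V')\subsetneq U$ containing $C$ — using the description in \ref{wesp} via the isomorphism $T_\Sigma$ — contradicting the hypothesis that $C$ lies in no proper weakly special subvariety of $U$.

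To make the second horn precise I would argue by induction on $\dim_\C U$ (or on $n$), exactly as one deduces Ax–Lindemann from Ax–Schanuel: if $C$ is contained in a proper weakly special $U'\subsetneq U$ we are done by the inductive hypothesis applied inside $U'$ (noting $E(U')$ is again a subvariety and $C\subset W\cap U'\cap E^{-1}(V\cap E(U'))$); otherwise the $f_i$ are $\Q$-linearly independent modulo $\C$ along $C$ and the ``no eigencoordinate relations beyond those forced'' situation lets us apply \thref{aslast} cleanly to get $\dim_\C C\le \dim_\C V+\dim_\C W-\dim_\C X$. The passage from ``$C$ not in a proper weakly special'' to ``$f_i$ are $\Q$-linearly independent modulo $\C$ \emph{and} the eigencoordinates of a generic point of $C$ satisfy no extra relations'' is where the definition of weakly special (a $\Q$-linear system $\Sigma$ on diagonals \emph{together with} an irreducible $V\subset\mathbb A_\C^{n(n-1)/2}$ in eigencoordinates) is used in an essential way: any such pair of relations produces a weakly special subvariety containing $C$, and minimality gives properness unless both are trivial.

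The main obstacle I expect is exactly this bookkeeping step — showing that the failure of the dimension inequality, combined with Ax–Schanuel, always localizes into \emph{either} a $\Q$-linear relation on the eigenvalues \emph{or} a nontrivial algebraic relation among the eigencoordinates, and that each of these genuinely yields a \emph{proper} weakly special subvariety (rather than all of $U$), uniformly over the generic point of $C$. This requires knowing that the eigencoordinates $T_{i,j}$ are well-defined regular functions on the relevant dense open $U_\Sigma$ (which is \thref{canbas} and the proof of \thref{ecoord1}), that Zariski closures behave well under the isomorphism $T_\Sigma$ of \eqref{eq:iso2}, and that the correction term relating $tr.d._\C\C(A,E(A))$ to $\dim_\C W+\dim_\C V-\dim_\C X$ is controlled precisely by $n-N$ where $N=\dim_\Q\langle f_1,\dots,f_n\rangle$ along $C$ — so that \thref{corasupp} rather than \thref{asupper} is the version actually invoked. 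Once these are in place, the inequality follows by the same arithmetic that derives Ax–Lindemann from Ax–Schanuel in the classical case.
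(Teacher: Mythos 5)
You have gathered the right ingredients (restriction to the dense open $U_\Sigma$, the $N$-weighted Ax--Schanuel, the eigencoordinate lemmas \thref{ecoord1} and \thref{ecoord2}), but the decisive quantitative step is left as an acknowledged ``obstacle'' rather than carried out, and the bookkeeping you sketch for it is not right. The paper's proof is a short direct chain along $B=U_\Sigma\cap C$: set $m=tr.d._\C\C(A,E(A))$ and $l=tr.d._\C\C(\{T_{i,j}(A)\})$; then \thref{corasupalt} (the alternate formulation, not \thref{corasupp}) gives $\dim_\C C+N\le m$; the lemmas \thref{ecoord1}, \thref{ecoord2} plus the tower/subadditivity of transcendence degree over $K=\C(\{T_{i,j}\})$ give $m\le tr.d._\C\C(A)+tr.d._\C\C(E(A))-l\le\dim_\C W+\dim_\C V-l$; and the hypothesis that $C$ lies in no proper weakly special subvariety of $U=X(\Sigma,Z)$ is used exactly once, to ensure that along $C$ the $f_i$ satisfy no $\Q$-linear relations beyond $\Sigma$ and the $T_{i,j}$ no algebraic relations beyond $Z$, whence $\dim_\C X=N+l$. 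Combining the three displays yields the theorem. Your phrase ``bounded above by $\dim_\C W+\dim_\C V-\dim_\C X$ plus a correction \ldots controlled precisely by $n-N$'' does not match this: the relevant slack is $+N$, coming from $\dim_\C X=N+l$ together with $m\le\dim_\C W+\dim_\C V-l$, and without writing down that identity and that upper bound the argument does not close.

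Two structural choices would also fail as written. Your dichotomy ``either the $f_i$ are $\Q$-linearly independent modulo $\C$ along $C$, or a nontrivial $\Q$-linear relation cuts out a proper weakly special containing $C$'' is wrong at both horns: on a weakly special $U$ the diagonal coordinates generically satisfy the relations of $\Sigma$, so independence cannot be expected, and relations already belonging to $\Sigma$ do not produce a \emph{proper} subvariety of $U$; the correct dichotomy is ``no relations beyond $\Sigma$ (and, for the $T_{i,j}$, beyond $Z$)'' versus ``extra relations,'' which is exactly what feeds the identity $\dim_\C X=N+l$. Likewise the proposed induction on $\dim_\C U$ to handle ``$C$ contained in a proper weakly special $U'$'' is both unnecessary (that case is excluded by hypothesis) and unworkable as a way to prove an unconditional statement, since the inequality genuinely fails for atypical components contained in smaller weakly specials --- that failure is the very reason the hypothesis appears; the induction belongs to the deduction of \thref{axlind} from this theorem, not to the proof of the theorem itself.
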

\begin{proof} Following the discussion of the previous section, we can associate to the subvariety $U$ a system $\Sigma$ of $\Q-$linear equations on the diagonal entries, as well as the corresponding $\Q-$linear subspace $L$ of $\C^k$ where $k$ is the number of generically distinct eigenvalues, and a subvariety $Z$ of $\mathbb{A}_\C^{\frac{n(n-1)}{2}}$. In other words with the notation of the previous section $U=X(\Sigma,Z)$. We also denote by $U_\Sigma$  the corresponding dense open subset we had in the discussion of the previous section.

At this point we let $B=U_\Sigma \cap C$, then $B$ is again a complex analytically irreducible subset that is dense in $C$ and it is not contained in a proper weakly special subvariety of $U$. In particular we will have $\dim_\C B=\dim_\C C$.

We denote by $f_i$ the diagonal coordinates of a matrix as functions on $B$ and similarly for the coordinates $g_{i,j}$. Likewise we denote the diagonal coordinates of the exponential map by $E_i$ and the strictly upper triangular by $E_{i,j}$ and we consider them as functions on $B$ as well, keeping in mind that $E_i(A)=e^{f_i}$. For reasons of convenience we let $A=\diag(\vec{f})+(g_{i,j})$ denote the matrix of the corresponding coordinates.

We start with some simple remarks concerning our setting. First of all, we will have\begin{equation}\dim_\C W\geq tr.d._\C\C(A)\text{, and}\label{eq:dimw}\end{equation}\begin{equation}
\dim_\C V\geq tr.d._\C \C(E(A))\label{eq:dimv}.
\end{equation}	

Next, we employ \thref{corasupalt}, to get that, if $N=\dim_\Q \langle f_1,\ldots,f_n\rangle_{\Q}$, then\begin{equation}\dim_\C B+N\leq tr.d._\C \C(A,E(A)).\label{eq:axschan}\end{equation}
We also set \begin{center}
	$m=tr.d._\C \C(A,E(A))$, and
\end{center}\begin{center}
 $l=tr.d._\C \C(\{T_{i,j}(A):1\leq i<j\leq n\})$,
\end{center}with $T_{i,j}$ denoting once again the eigencoordinates of a matrix. From this point on for convenience we will denote simply by $T_{i,j}$ the elements $T_{i,j}(A)$.

At this point we turn our attention to \thref{ecoord1}, \thref{ecoord2}, along with equations \eqref{eq:rel} and \eqref{eq:sxesh}. On $B$ the eigencoordinates $T_{i,j}$ are well defined as functions on $B$. From the aforementioned lemmas we also get
\begin{equation}tr.d._\C \C(A)=tr.d._\C \C(\{T_{i,j},f_i:1\leq i<j\leq n\}), \text{ and}\end{equation}
\begin{equation}tr.d._\C \C(E(A))=tr.d._\C \C(\{T_{i,j},E_i:1\leq i<j\leq n\}).\end{equation}

By the definition of weakly special subvarieties we see that \begin{equation}
\dim_\C U=\dim_\C E(U)=\dim_\C X= N+l.\label{eq:dimx}
\end{equation} On the other hand, by the minimality of the weakly special subvariety $U$, we get that, if $K=\C(\{T_{i,j}:1\leq i<j\leq n\})$,
\begin{center}
	$m=l+tr.d._KK(\{E_i,f_i:1\leq i\leq n\})$
\end{center}
We also have that\begin{center}
	 $tr.d._\C \C(A)= tr.d._\C K+tr.d._K K(f_1,\ldots,f_n)$,
\end{center} and likewise that \begin{center}
	$tr.d._\C \C(E(A))= tr.d._\C K+ tr.d._K K(E_1,\ldots,E_n)$.
\end{center}

Combining the above equalities implies that 
\begin{equation}
m\leq tr.d._\C \C(E(A))+tr.d._\C \C(A)-l\label{eq:protel}.
\end{equation}
Using \eqref{eq:protel} along with \eqref{eq:dimv} and \eqref{eq:dimw} yields 
\begin{center}$m\leq \dim_\C W+\dim_\C V-l$.\end{center}
Together with \eqref{eq:axschan}, \eqref{eq:dimx}, and the fact that $\dim_\C B=\dim_\C C$, this finishes the proof.\end{proof}

\begin{cor}[Ax-Lindemann for $U_n$]\thlabel{axlind}Let $V\subset U_n$ be an algebraic subvariety with $I_n\in V$. If $W\subset E^{-1}(V)$ is a maximal irreducible subvariety that contains the origin, then $W$ is a weakly special subvariety.\end{cor}
\begin{proof}Let $U$ be the minimal weakly special subvariety that contains $W$, $X=E(U)$, and let $V'=V\cap X$. We use \thref{atypint} for $C=W$ to get \begin{center}
	$\dim_\C W\leq \dim_\C{W}+\dim_\C V'-\dim_\C X$.\end{center}This implies $\dim_\C X\leq \dim_\C V'$, and since $V'\subset X$ we get that $X\subset V$ and that $W\subset U\subset E^{-1}(V)$. Maximality of $W$ then implies that $W=U$ is weakly special.   \end{proof}
\part{General Matrices}
Having studied the exponential of $\mathfrak{h}_n$ we can expect to achieve similar Ax-Schanuel and Ax-Lindemann results for the case of general matrices. Once again the key role will be played by the eigenvalues of our matrix. 

We start with considering certain subsets of $\mathfrak{gl}_n$ that will assist us in formulating the Ax-Schanuel and Ax-Lindemann results. We then proceed in a similar fashion to the upper triangular case. Namely we start by stating the Ax-Schanuel result and then reduce its proof to \thref{asmainred}. Finally, we conclude with some corollaries of our result.\\

\textbf{Notation:} For the remainder we will denote the Lie algebra of $n\times n$ matrices over $\C$ by $\mathfrak{gl}_n$ and the respective exponential function by \begin{center}
	$E:\mathfrak{gl}_n\rightarrow GL_n$.
\end{center}

\section{Data of a matrix and the exponential}\label{wekspegln}

We begin our study by defining the \textbf{data} of a matrix $A$, a notion that will generalize the eigencoordinates we had in the upper triangular case. With the help of this new notion we can define, as we will see, the weakly special subvarieties and achieve a simpler description of the exponential. 

As we did in the case of the upper triangular matrices, throughout this section we present as lemmas the equalities of transcendence degrees that we will need in the proofs of our main results. 

\subsection{The Data of a matrix}\label{datasection}

Let $V$ be a $\C-$linear space with $\dim_\C V=n$. Let also $A\in \homm(V,V)=\mathfrak{gl}_n$ then $A$ is uniquely characterized by the following data:\begin{enumerate}
	\item A number of distinct complex numbers $f_1,\ldots,f_k$, the eigenvalues of $A$,
	\item for each eigenvalue $f_i$ an $m_i\in \N$, the multiplicity of that eigenvalue, such that $\Sum{i=1}{k}m_i=n$,
	\item for each $f_i$ as above, a subspace $V_i\leq V$, with $\dim V_i=m_i$, such that $V=\displaystyle\bigoplus_{i=1}^k V_i$, i.e. to every eigenvalue a corresponding generalized eigenspace, and
	\item for each $f_i$ as above, a nilpotent operator $N_i\in\homm(V_i,V_i)$, i.e. $N_i=(A-f_iI_n)|_{V_i}$.
\end{enumerate}The above picture also holds over an arbitrary algebraically closed field.

\begin{defn} Let $A$ be a matrix as above. Then we define the \textbf{data of the matrix} to be the data \begin{center}
		$((f_1,\ldots,f_k),(m_1,\ldots,m_k),(V_1,\ldots,V_k), (N_1,\ldots,N_k))$.
\end{center} \end{defn}
The information of the generalized eigenspaces $V_i$ and nilpotent operators $N_i$ of a matrix $A$ with $k$ distinct eigenvalues, each with respective multiplicity $m_i$, is parametrized by a variety which we will denote by $W_k(\vec{m})$. We also let $w_{k,\vec{m}}=\dim W_k(\vec{m})$. In what follows we will need to consider a set of coordinates on such a variety, which we will denote by $T_j$ with $1\leq j\leq w_{k,\vec{m}}$. 

These $T_j$ will play the role of the eigencoordinates of Part I. We digress here to properly define these varieties and make the above ideas more rigorous. We do this over $\C$, though the same construction clearly works over any algebraically closed field.

\subsubsection*{Some auxiliary varieties}

We fix an $n-$dimensional vector space $V$ over $\C$, we also fix $k\in\N$ and $m_1\leq\ldots\leq m_k\in\N$ such that $\Sum{i=1}{k}m_i=n$. We need a space parametrizing all pairs of $k-$tuples of the form $((V_1,\ldots, V_k),(N_1,\ldots,N_k))$ where $V_i$ is an $m_i-$dimensional subspace of $V$, $N_i$ is a nilpotent operator on $V_i$, and the $V_i$ are such that $V=\displaystyle\bigoplus_{i=1}^k V_i$.

To this end, consider the product of Grassmannians \begin{center}
	$G(\vec{m},V):=Gr_{m_1}(V)\times\ldots\times Gr_{m_k}(V)$.
\end{center}On this space we consider the trivial bundle $S:=V\times G(\vec{m},V)$ and for $1\leq i\leq k$ the subbudle $\mathbb{V}_i$ of $S$ that is the pullback of the tautological bundle of the Grassmannian $Gr_{m_i}(V)$ on $G(\vec{m},V)$.

Now consider the morphism of vector bundles over $G(\vec{m},V)$ \begin{center}
	$\phi_{(\vec{m},V)}:\bigwedge^{m_1}\mathbb{V}_1\wedge\ldots\wedge\bigwedge^{m_k}\mathbb{V}_k\rightarrow \bigwedge^{n} S $.
\end{center}The set $A(\vec{m},V):=\{P\in G(\vec{m},V):\phi_{(\vec{m},V),P}\neq 0  \}$ is an open subvariety of $G(\vec{m},V)$.

\begin{defn}
Let $X$ be a topological space and $F$ a finite dimensional vector bundle on $X$. Then we define $Nil(F)$ to be the vector bundle over $X$ whose fiber at $x\in X$ is the vector space $Nil(F_x)$ of nilpotent operators on $F_x$. 
\end{defn}

Let us now consider the vector bundle $E(\vec{m})=Nil(\mathbb{V}_1)\times\ldots\times Nil(\mathbb{V}_k)$ over $G(\vec{m},V)$. Then the restriction $E(\vec{m})|_{A(\vec{m},V)}$  of this vector bundle on $A(\vec{m},V)$ is exactly the space we want. So we define $W_k(\vec{m}):=E(\vec{m})|_{A(\vec{m},V)}$. 


\subsubsection*{Some auxiliary maps}

In what follows we will also need to consider a group action on $X_k(\vec{m})=\mathbb{A}_\C^k\times W_k(\vec{m})$. Consider the equivalence relation on $\{1,\ldots,k\}$ given by $i\sim j$ if and only if $m_i=m_j$. Let $i_1,\ldots,i_r$ be representatives for the equivalence classes of this equivalence relation, and for $1\leq j\leq r$ we let \begin{center}
	$n_j:=|\{s: 1\leq s\leq k, s\sim i_j   \}  |$
\end{center} and note that $\Sum{j=1}{r}n_j=k$. 

Let $S_{\vec{m}}:=S_{n_1}\times\ldots\times S_{n_r}$ be the direct product of the symmetric groups $S_{n_j}$. Each group $S_{n_j}$ acts naturally as permutations on $\mathbb{A}_\C^{n_j}$ and again as permutations of the factors $Gr_{m_s}(V)$ with $s\sim i_j$ of $G(\vec{m},V)$. As a result we get a natural action of each $S_{n_j}$ on $E(\vec{m})$ and by restriction on $W_k(\vec{m})$.

Putting all of these actions together we get an action of $S_{\vec{m}}$ on $\mathbb{A}_\C^{k}$ and one on $W_k(\vec{m})$. Because of our convention that $m_1\leq \ldots\leq m_k$, we may assume that the $S_{n_1}-$factor of $S_{\vec{m}}$ acts on the first $n_1$ coordinates of $\mathbb{A}_\C^{k}$, the $S_{n_2}-$factor on the next $n_2$ coordinates and so on. These two actions of $S_{\vec{m}}$ combine to give a diagonal action on $X_k(\vec{m})$.

If we let $f_1,\ldots,f_k$ be coordinates on $\mathbb{A}_\C^{k}$ we define $\Gamma_k:=\mathbb{A}_\C^{k}\backslash \underset{i<j}{\bigcup}Z(f_i-f_j)$ and define $Y_k(\vec{m}):= (\Gamma_k\times W_k(\vec{m}))/S_{\vec{m}}$. We also have a finite surjective morphism\footnote{See Theorem 1, pages 104-105 of \cite{mumfordav}.} that we denote by \begin{center}
	$\pi_{k,\vec{m}}:\Gamma_k\times W_k(\vec{m})\rightarrow Y_k(\vec{m})$.
\end{center}

For coordinates $f_i$, $1\leq i\leq k$, of $\mathbb{A}_{\C}^k$ and coordinates $T_j$, $1\leq j\leq w_{k,\vec{m}}$ of $W_k(\vec{m})$ we denote $[(\overrightarrow{(f_i(P))},\overrightarrow{(T_j(P))})] \in Y_k(\vec{m})$ the image of the point $P=(\overrightarrow{(f_i(P))},\overrightarrow{(T_j(P))})\in X_k(\vec{m})$. We also define the map \begin{center}
	$\Psi_{k,\vec{m}}:Y_k(\vec{m}) \rightarrow   \mathfrak{gl}_n$
\end{center}such that $\{[(\overrightarrow{(f_i(P))},\overrightarrow{(T_j(P))})]\}\mapsto QJQ^{-1}$, with $J$ being a block diagonal matrix, with its blocks being Jordan blocks, where we allow elements of the superdiagonal to be either $0$ or $1$, and $Q$ being the transition matrix that is defined by the subspaces $V_i$ parametrized by the coordinates $T_j$.

We also define \begin{center}
	$\Phi_{k,\vec{m}}:=\Psi_{k,\vec{m}}\circ\pi_{k,\vec{m}}:\Gamma_k\times W_k(\vec{m})\rightarrow \mathfrak{gl}_n$.
\end{center}

\begin{rmks}1. The image of $\Psi_{k,\vec{m}}$ will be exactly the set of all matrices with $k$ distinct eigenvalues whose multiplicities are given by the entries of the vector $\vec{m}$. This is true since the Jordan canonical form of a matrix is uniquely determined, up to permutation, by the Jordan blocks. Permuting these blocks also results in respective permutations of the columns of the transition matrix $P$, which are parametrized by the $T_j$.
	
2. We note that while $\Psi_{k,\vec{m}}$ is injective, $\Phi_{k,\vec{m}}$ is not. Nevertheless, it is a quasi-finite morphism of varieties, since all of its fibers are finite.

3. The map $\pi_{k,\vec{m}}$ is \'etale, since the action of $S_{\vec{m}}$ is free. Since $\Phi_{k,\vec{m}}$ is an open immersion onto its image, $\Psi_{k,\vec{m}}$ and $\Phi_{k,\vec{m}}$ are also \'etale onto their image.

4. The group action that we defined above reflects a new level of geometric complexity to the case of $\mathfrak{gl}_n$ compared to that for $\mathfrak{h}_n$. This stems from the fact that in $\mathfrak{gl}_n$ there is no a priori order to the eigenvalues of a matrix, in contrast to what happens in $\mathfrak{h}_n$, where they are naturally ordered in the diagonal.\end{rmks}

\subsubsection{Changing coordinates}

What is most important in our setting is that the passage from a matrix to its data preserves the transcendence degree. As in the upper triangular case, we start by considering elements in some ring of formal power series.\\

Let  $g_{i,j}\in\C[[t_1,\ldots,t_l]]$, $1\leq i,j\leq n$ and write $A=(g_{i,j})$. Then the eigenvalues of $A$ are elements of the integral closure of $\C[[t_1,\ldots,t_l]]$. By the Newton-Puiseux Theorem we know that this is contained in the field \begin{center}
	$\mathbb{L}=\underset{\vec{r}\in\N^l}{\bigcup} \C((t_1^{\frac{1}{r_1}} ))\cdots((t_l^{\frac{1}{r_l}}))$.
\end{center}

Assume that there are exactly $k$ distinct such eigenvalues $f_i$ of $A$, and that they have corresponding multiplicities $m_i$. Then the coordinates $T_j(A)$ of the point of $W_k(\vec{m})$ are also elements of the field $\mathbb{L}$.

We start by making rigorous the fact that changing from coordinates of $\mathfrak{gl}_n$ to coordinates of the data does not affect the transcendence degree.
\begin{lemma}\thlabel{trdata}Let  $g_{i,j}\in\C[[t_1,\ldots,t_l]]$, $1\leq i,j\leq n$. Let $K$ be an algebraic closure of the field $\C(A)$ and assume that the matrix $A=(g_{i,j})$ has exactly $k$ distinct eigenvalues $f_1,\ldots,f_k\in K$ with respective multiplicities $m_i$. Let also $T_j=T_j(A)$, $1\leq j\leq w_{k,\vec{m}}$, be the coordinates of the point in the variety $W_k(\vec{m})$ parametrizing the rest of the corresponding data of $A$. Then
	\begin{equation*}
	tr.d._\C \C(A)=tr.d._\C \C (\{f_i,T_j:1\leq i\leq k, 1\leq j\leq w_{k,\vec{m}}\}).
	\end{equation*}\end{lemma} 
\begin{proof} Let $K_0=\C (\{f_i,T_j:1\leq i\leq k, 1\leq j\leq w_{k,\vec{m}}\})$ and  \begin{center}
		$K_1=\C(\{g_{i,j}, T_r, f_s:1\leq i,j\leq n, 1\leq s\leq k, 1\leq r\leq w_{k,\vec{m}} \})$.
	\end{center}
The $T_j$ and $f_i$ are algebraic over $\C(A)$. So $tr.d._\C \C(A)=tr.d._\C K_1$.
	
On the other hand the $T_j$ and $f_i$ determine the $g_{i,j}$ via an algebraic process, in particular by determining the Jordan canonical form and the transition matrix $Q$ above. So we get that $tr.d._\C K_0=tr.d._\C K_1$.

Combining the two equalities of transcendence degrees the result follows. \end{proof}

\subsection{The Exponential Map}

Let $A=(g_{i,j})$ be a matrix with $g_{i,j}\in \C[[t_1,\ldots,t_l]]$, where the $g_{i,j}$ have no constant term. Let us also assume that $A$ has data given by\begin{center}
		$((f_1,\ldots,f_k),(m_1,\ldots,m_k),(V_1,\ldots,V_k), (N_1,\ldots,N_k))$.
\end{center}We are able to consider such data working over an algebraic closure $K$ of the field $\C(A)$. We would like to extract from this a simpler way of computing the effect of the exponential on $A$. 

Since the $g_{i,j}$ have no constant term, it is easy to see that the distinct $f_i$ cannot differ by an integral multiple of $2\pi i$, and hence the corresponding data for the matrix $E(A)$ will be:\begin{enumerate}
	\item the eigenvalues will be the distinct elements $e^{f_1},\ldots, e^{f_k}\in\mathbb{L}$, 
	\item the multiplicities $m_i$ will be the same,
	\item the generalized eigenspaces $V_i$ will remain as such, and
	\item the nilpotent operator corresponding to $e^{f_i}$ is \begin{center}
		$N_i'= e^{f_i} (E(N_i)-id_{V_i})$.
	\end{center}
\end{enumerate}

Let $E_{i,j}(A)$ denote the $(i,j)$ entry of the exponential matrix $E(A)$. Then we will have the following
\begin{prop}\thlabel{trdataexp}Let  $g_{i,j}\in\C[[t_1,\ldots,t_l]]$, $1\leq i,j\leq n$, be such that the $g_{i,j}$ have no constant term. We assume that $A=(g_{i,j})$ has exactly $k$ distinct eigenvalues $f_1,\ldots,f_k$ with respective multiplicities $m_1,\ldots,m_k$. Let $K$ be an algebraic closure of the field $\C(A)$ and $T_j=T_j(A)$, $1\leq j\leq w_{k,\vec{m}}$, be coordinates for the variety $W_k(\vec{m})$ parametrizing the corresponding data of $A$. Then
\begin{equation*}
tr.d._\C \C(E(A))=tr.d._\C \C (\{e^{f_i},T_j:1\leq i\leq k, 1\leq j\leq w_{k,\vec{m}}\}), \text{ and}
\end{equation*}\begin{equation*}
tr.d._\C \C(A,E(A))=tr.d._\C \C (\{f_i,e^{f_i},T_j:1\leq i\leq k, 1\leq j\leq w_{k,\vec{m}}\}).
\end{equation*}\end{prop}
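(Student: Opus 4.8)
The plan is to mirror the structure of the proof of \thref{ecoord2} from the upper triangular case, replacing the eigencoordinates $T_{i,j}$ by the coordinates $g_j$ on $W_k(\vec{m})$ and replacing the explicit system \eqref{eq:eq7} by the abstract relation $N_i' = e^{z_i}(E(N_i) - \id_{V_i})$ governing the nilpotent operators of $E(A)$. First I would apply \thref{trdata} directly to the matrix $E(A)$: since $E(A)$ also has $k$ distinct eigenvalues (namely $e^{z_1},\dots,e^{z_k}$, distinct because the $z_i$ have no constant term), with the same multiplicities $m_i$, the same variety $W_k(\vec{m})$ parametrizes its data, so $tr.d._\C \C(E(A)) = tr.d._\C\C(\{e^{z_i}, g_j(E(A)) : 1\le i\le k,\ 1\le j\le w_{k,\vec m}\})$. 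It then suffices to show that the two fields $\C(\{e^{z_i}, g_j(A)\})$ and $\C(\{e^{z_i}, g_j(E(A))\})$ coincide, or at least are algebraic over one another; combined with \thref{trdata} for $A$ this yields the first displayed equality, and the second follows by writing $\C(A,E(A))$ in terms of both data sets and using that $z_i$ determines $e^{z_i}$.

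The key step is the comparison of the two parametrizing points of $W_k(\vec m)$. Here I would argue as follows. The generalized eigenspaces $V_i$ of $A$ and of $E(A)$ literally coincide, since $E(A)$ is a power series in $A$; so the "eigenspace part" of the data is unchanged, and only the nilpotent-operator coordinates can differ. For the nilpotent parts, the relation $N_i' = e^{z_i}(E(N_i) - \id_{V_i}) = e^{z_i}\bigl(N_i + \tfrac{1}{2}N_i^2 + \cdots + \tfrac{1}{m_i!}N_i^{m_i}\bigr)$ expresses $N_i'$ as $e^{z_i}$ times a polynomial in $N_i$ with rational coefficients and no constant term; since $N_i$ is nilpotent of index at most $m_i$, this sum is finite and its leading term is $e^{z_i} N_i$. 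This map $N_i \mapsto N_i'$ is invertible on nilpotent operators: solving recursively (first recover $e^{-z_i}N_i' \equiv N_i$ modulo higher powers, then correct order by order, exactly as in the $s_{i_{j-2},i_j}$ computation in the proof of \thref{ecoord2}) expresses $N_i$ as $e^{-z_i}$ times a polynomial in $N_i'$ with rational coefficients. In coordinates on $W_k(\vec m)$ this says $g_j(E(A))$ is a $\C(e^{z_1},\dots,e^{z_k})$-rational function of the $g_j(A)$ and conversely, whence the two fields above are equal (in fact each is a finite extension of $\C(\{e^{z_i},g_j(A)\})$ built from the ring $\Q[e^{\pm z_i}]$), giving the desired equality of transcendence degrees.

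I anticipate the main obstacle to be making precise the statement that the data, and in particular the nilpotent-operator coordinates $g_j$, transform in a $\C(e^{z_i})$-algebraic way — i.e. checking that the invertible correspondence $N_i \leftrightarrow N_i'$ at the level of operators descends to a well-defined algebraic correspondence on the chosen coordinate charts of $W_k(\vec m)$, independently of the choices implicit in defining those charts. This is the analogue of the bookkeeping in the proof of \thref{ecoord2}, but now one must be careful that $W_k(\vec m)$ is only defined up to the choice of coordinates and that the generalized eigenspaces themselves vary; the cleanest route is to observe that both $A$ and $E(A)$ have the same $V_i$ and the same $e^{\pm z_i}$ available in $K$, so the transition functions between the two sets of data coordinates are regular functions on a dense open subset with coefficients in $\Q[e^{\pm z_1},\dots,e^{\pm z_k}]$, and then to invoke \thref{trdata} twice exactly as above. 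Everything else — distinctness of the $e^{z_i}$, the finiteness of the exponential series on nilpotents, the recursion to invert it — is routine and parallels Part~I.
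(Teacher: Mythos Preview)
Your proposal is correct and follows essentially the same route as the paper: apply \thref{trdata} to $E(A)$, observe that the generalized eigenspaces are unchanged, and then compare the nilpotent-operator coordinates via the relation $N_i' = e^{z_i}(E(N_i)-\id_{V_i})$ to conclude that the two fields $\C(\{e^{z_i},g_j(A)\})$ and $\C(\{e^{z_i},g_j(E(A))\})$ agree. The only notable difference is in how the inversion of $N_i \mapsto E(N_i)-\id_{V_i}$ is carried out: you propose a recursive order-by-order argument, whereas the paper simply writes down the inverse map $N' \mapsto \log(\id + N') = \sum_{r\ge 1}(-1)^{r+1}(N')^r/r$, which is a finite polynomial on nilpotents and hence visibly algebraic. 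This makes the bialgebraicity of $Nil(m)\to Nil(m)$ immediate and dispenses with the bookkeeping you flag as the main obstacle; otherwise the arguments are the same.
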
 
\begin{proof}\renewcommand{\qedsymbol}{}Let $\bar{T}_j=T_j(E(A))$, $1\leq j\leq w_{k,\vec{m}}$, be the coordinates in $W_k(\vec{m})$ parametrizing the corresponding data of $E(A)$. From \thref{trdata} applied to the matrix $E(A)$ we get
\begin{equation*}
tr.d._\C \C(E(A))=tr.d._\C \C (\{e^{f_i},\bar{T}_j:1\leq i\leq k, 1\leq j\leq w_{k,\vec{m}}\}).\end{equation*}

Therefore we are left with proving the following equality\begin{equation*}\begin{split}
tr.d._\C \C(\{e^{f_i},\bar{T}_j:1\leq i\leq k, 1\leq j\leq w_{k,\vec{m}} \})=& \\
=tr.d._\C \C (\{e^{f_i},T_j:1\leq i\leq k, 1\leq j\leq w_{k,\vec{m}}\}).&\end{split}\end{equation*}

By the remarks above though the $T_j$ and $\bar{T}_j$ will parametrize the same $V_i$, so that their only difference is located in those $\bar{T}_j$ and $T_j$ that parametrize the nilpotent operators. For the latter we know that we will have\begin{center}
	$N_i'= e^{f_i} (E(N_i)-id_{V_i})$.\end{center}

\textbf{Claim:} The map $f:Nil(m)\rightarrow Nil(m)$ given by $N\mapsto E(N)-id$ is a bialgebraic map, where $Nil(m)$ denotes the space of nilpotent operators on an $m-$dimensional $\C-$vector space.\\

Assuming this claim holds, if $T^i_{j}$, $\bar{T}^i_{j}$, $j\in J_i$ denote the elements among the $T_j$, and $\bar{T}_j$ respectively, that parametrize the information of the nilpotent operators $N'_i$ and $N_i$, then the above shows that 
\begin{center}
	$\C(\{ e^{f_i},T^i_j:j\in J_i \})=\C(\{e^{f_i}, \bar{T}^i_j:j\in J_i \})$,
\end{center}for all $i=1,\ldots,k$. Combining this with the fact that $T_j=\bar{T}_j$ for all of the rest, i.e. those parametrizing the $V_i$, the result follows trivially.

The above argument shows that in fact $\C(\{ e^{f_i},T_j:1\leq j\leq w_{k,\vec{m}}\})=\C(\{e^{f_i}, \bar{T}_j:1\leq j\leq w_{k,\vec{m}} \})$. Combining this with the remark at the end of the proof of \thref{trdata} we get that the field $F_1=\C(\{A,e^{f_i},T_j:1\leq i\leq k,1\leq j\leq w_{k,\vec{m}}\})$ is a finite algebraic extension of the field $\C(A,E(A))$. Similarly, $F_2=\C(\{f_i,e^{f_i},T_j:1\leq i\leq k, 1\leq j\leq w_{k,\vec{m}}\})$ is a finite algebraic extension of $F_1$ which finishes the proof of the second equality.\end{proof}

\begin{proof}[Proof of the Claim] Let $N$ be a nilpotent operator on an $m-$dimensional vector space and let $k\in\N$ be such that $N^k\neq 0$ and $N^{k+1}=0$. Then $E(N)=\Sum{r=0}{k}\frac{N^r}{r!}$ so that $N\mapsto E(N)-id$ is obviously algebraic.

On the other hand, define $L:Nil(m)\rightarrow Nil(m)$ given by 
\begin{center}
	$N\mapsto \log(id+N)=\Sum{r=1}{\infty}\frac{(-1)^{r+1} N^r}{r}$.
\end{center}Since our operators are nilpotent this sum is finite and the map is algebraic, similarly to the above argument. The two functions are inverse of each other, which proves the claim.  \end{proof}

\begin{rmk}
	This shows that the $T_j$ are the natural generalization of the notion of ``eigencoordinates" we saw in the case of $\mathfrak{h}_n$. We note that in the case of $\mathfrak{h}_n$ instead of the more involved spaces $Y_k(\vec{m})$ we had $\mathbb{A}^k_\C\times \mathbb{A}_\C^{\frac{n(n-1)}{2}}$. The role of $\mathbb{A}_\C^{\frac{n(n-1)}{2}}$, the space of the eigencoordinates, is now played by $W_k(\vec{m})$.

\end{rmk}
\section{Ax-Schanuel for $GL_n$}

We continue with our study of $E:\mathfrak{gl}_n\rightarrow GL_n$ by stating the Ax-Schanuel result and proving it by reducing to \thref{propinter}.

We will denote the coordinates of the map $E$ by $E_{i,j}$. We start with stating the theorem in the functional point of view.

\begin{theorem}[Weak Ax-Schanuel for $GL_n$]\thlabel{asgln} Let $g_{i,j}\in\C[[t_1,\ldots,t_l]]$ be power series with no constant term, where $1\leq i,j\leq n$. Let $f_i$, where $1\leq i\leq n$, denote the eigenvalues of the matrix $A=(g_{i,j})$. Let us also set $N=\dim_\Q\langle f_1,\ldots,f_n\rangle_{\Q}$, then 
	\begin{center}$tr.d._\C \C(A,E(A))\geq N+\rank J((g_{i,j});\vec{t})$.\end{center}\end{theorem}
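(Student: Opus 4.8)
The plan is to reduce \thref{asgln} to the already-established family version \thref{asmainred}, exactly mirroring the strategy that produced \thref{asupper} from \thref{ecoord2} in the upper triangular case. The bridge is \thref{trdataexp}: passing from the matrix $A$ to its data $\{(z_i),(g_j)\}$ on $W_k(\vec m)$ preserves transcendence degrees, so that
\begin{equation*}
tr.d._\C \C(A,E(A)) = tr.d._\C \C(\{z_i, e^{z_i}, g_j : 1\le i\le k,\ 1\le j\le w_{k,\vec m}\}).
\end{equation*}
Here $k$ is the (generic) number of distinct eigenvalues of $A=(x_{i,j})$ and $m_1,\dots,m_k$ their multiplicities. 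Thus the right-hand side of \thref{asgln} only involves the $k$ distinct eigenvalues $z_1,\dots,z_k$ (the remaining $z_i$ in the statement being repetitions, contributing nothing new to $\C(A,E(A))$ and nothing to $N=\dim_\Q\langle z_1,\dots,z_n\rangle = \dim_\Q\langle z_1,\dots,z_k\rangle$ after discarding repeats).

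First I would fix a generic point $\vec t_0$ of the base $B$ (or work over the fraction field of the power series ring) so that the number $k$ of distinct eigenvalues and the multiplicity vector $\vec m$ are constant, and extract the $k$ genuinely distinct eigenvalue series $z_1,\dots,z_k \in \C[[t_1,\dots,t_m]]$; these have no constant term since the $x_{i,j}$ don't. Next I would replace the $\Q$-linear independence hypothesis that is needed to invoke \thref{asmainred} by the weaker datum $N = \dim_\Q\langle z_1,\dots,z_k\rangle$: choose a maximal $\Q$-linearly independent subset $z_{i_1},\dots,z_{i_N}$ and write the others as $\Q$-linear combinations. Applying \thref{asmainred} to the power series $f_1=z_{i_1},\dots,f_N=z_{i_N}$ together with the full collection of $g_j$ (relabelled $g_1,\dots,g_{w_{k,\vec m}}$, themselves viewed as elements of $\C[[t_1,\dots,t_m]]$ via the algebraic expressions from \thref{trdata}) and throwing in the dependent eigenvalues $z_i$ and all $e^{z_i}$ as extra coordinates — which changes neither side, since each dependent $z_i$ and each $e^{z_i}$ is algebraic over $\C(\{z_{i_r}, e^{z_{i_r}}\})$ — yields
\begin{equation*}
tr.d._\C \C(\{z_i, e^{z_i}, g_j\}) \ge N + \rank J(\vec z_{\,i_\bullet}, \vec g ; \vec t).
\end{equation*}
The last point is to identify the Jacobian rank on the right with $\rank J((x_{i,j})\,|\,\vec t)$ from the statement: since $A \mapsto \{(z_i),(g_j)\}$ and its inverse are algebraic (\thref{trdata}), the transcendence degree of the field generated by the $x_{i,j}$ over $\C$ equals that generated by the $z_i,g_j$, and this common value is simultaneously $\rank J((x_{i,j})|\vec t)$ and $\rank J(\vec z_{\,i_\bullet},\vec g;\vec t)$ — both compute $\dim_\C$ of the same analytic image, adding dependent eigenvalues not increasing the rank.

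The main obstacle I anticipate is not the reduction itself but the bookkeeping around the variety $W_k(\vec m)$: one must be sure that a fixed choice of regular coordinate functions $g_j$ on $W_k(\vec m)$, evaluated along the family, genuinely gives power series $g_j(t_1,\dots,t_m)$ to which \thref{asmainred} applies — this requires that the data map be defined and algebraic on a neighbourhood of the relevant point, which is the content of \thref{trdata}'s proof (``the process of finding the Jordan canonical form''), together with the quasi-finiteness of $\Phi_{k,\vec m}$ noted in the Remarks. A secondary subtlety is that \thref{asmainred} is stated for $\Q$-linearly independent $f_i$ with no constant term, so one genuinely needs the passage to the independent subset $z_{i_1},\dots,z_{i_N}$ and a short argument that adjoining the dependent eigenvalues (and their exponentials) to the field leaves both the transcendence degree and the Jacobian rank unchanged; this is routine linear algebra over $\Q$ but should be spelled out. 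With these points handled, combining the three displayed (in)equalities gives $tr.d._\C\C(A,E(A)) \ge N + \rank J((x_{i,j})|\vec t)$, which is the claim.
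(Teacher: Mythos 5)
Your proposal is correct and follows essentially the same route as the paper: invoke \thref{trdataexp} to replace $tr.d._\C\C(A,E(A))$ by $tr.d._\C\C(\{z_i,e^{z_i},g_j\})$ and then reduce to \thref{asmainred}. The extra bookkeeping you supply (passing to a maximal $\Q$-linearly independent subset of the eigenvalues and matching the Jacobian ranks via the algebraicity of the data map) is exactly the step the paper leaves implicit in the phrase ``which reduces to \thref{asmainred}.''
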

\begin{proof} Let $\mathbb{L}$ be the field of Puiseux series defined above. Assume $A=(g_{i,j})\in \mathfrak{gl}_n(\C[[t_1,\ldots, t_l]])$ has exactly $k$ distinct eigenvalues. Let us also assume that the data of the matrix $A$ is given by \begin{center}
		$(f_1,\ldots,f_k)$,$(m_1,\ldots,m_k)$, $(V_1,\ldots, V_k)$, and $(N_1,\ldots,N_k)$.
	\end{center} Let $T_j=T_j(A)$, $1\leq j\leq w_{k,\vec{m}}$ be the coordinates of the point in $W_{k}(\vec{m})(\mathbb{L})$ describing the above data of $A$.

We are therefore in a position to apply \thref{trdataexp} to get that
\begin{center}
	$tr.d._\C \C(A,E(A))=tr.d._\C \C(\{f_i,e^{f_i},T_j:1\leq i\leq k, 1\leq j\leq w_{k,\vec{m}}\})$.
\end{center}Using this together with \thref{propinter}, applied to the field $\mathbb{L}$, we get that \begin{center}
	$tr.d._\C \C(A,E(A))\geq N+rank(J(\vec{f},\vec{T};\vec{t}))$.
\end{center}
Following the remarks at the end of \ref{datasection}, the map $\Phi_{k,\vec{m}}$ is \'etale so
\begin{center}
	$\rank J((g_{i,j});\vec{t})=rank(J(\vec{f},\vec{T};\vec{t}))$,
\end{center}and the result follows.\end{proof}

\textbf{An Alternate Formulation\\}

Similar to the alternate formulation \thref{aslast} for the Ax-Schanuel we had for $U_n$  we can give an alternate form of \thref{asgln}, again we have to change the background accordingly.

 We let $V\subset \mathfrak{gl}_n$ be an open subset and $X\subset V$ an irreducible complex analytic subvariety of $V$ containing the origin such that locally at $\vec{0}$ the functions $g_{i,j}$ for $1\leq i,j\leq n$, are meromorphic functions on $X$.

Once again, for reasons of convenience, and notational coherence, we let $A=(g_{i,j})$ denote the matrix corresponding to the coordinates $g_{i,j}$.\\

\begin{theorem}[Weak Ax-Schanuel-Alternate Formulation]\thlabel{asalt} In the above context, if the eigenvalues $f_i$ of the matrix $A=(g_{i,j})$ are such that $N=\dim_\Q\langle f_1,\ldots,f_n\rangle_{\Q}$, then 
	\begin{center}$tr.d._\mathbb{C}\mathbb{C}(A, E(A))\geq N+\dim_\C X$.\end{center}
\end{theorem}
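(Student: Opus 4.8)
The plan is to reduce \thref{asalt} to the already-established functional statement \thref{asgln} by the same Seidenberg-style argument used implicitly throughout Part I. First I would pass from the geometric data $(V,X,D_{i,j})$ to a power-series setting: choosing local coordinates $t_1,\ldots,t_m$ near the origin on a smooth model of $X$, the meromorphic coordinate functions $x_{i,j}$ become (after clearing denominators, or working on the locus where they are holomorphic) elements $x_{i,j}\in\C[[t_1,\ldots,t_m]]$ with no constant term, since $X$ passes through $0_n$. The hypothesis $\dim_\C X=\rank(D_{i,j}(x_{s,t}))$, taken over the field of meromorphic functions on $X$, translates into $\dim_\C X=\rank J((x_{i,j})\,|\,\vec t)$, the ordinary Jacobian rank of the power-series matrix; and the assumption that the $E_{s,t}$ satisfy the expected differential equations with respect to the $D_{i,j}$ forces the holomorphic functions representing $E_{i,j}((x_{s,t}))$ to coincide with the entries of the genuine exponential $E(A)$ of the matrix $A=(x_{i,j})$, because both are the unique solution of the corresponding system of ODEs with the correct initial condition at the origin (here the no-constant-term hypothesis is what pins down the initial value, $E(A)(\vec 0)=I_n$).

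With this dictionary in place, the transcendence degree $tr.d._\C\C(A,E(A))$ computed from the meromorphic functions on $X$ equals the transcendence degree of the field generated by the power series $x_{i,j}$ and the entries of $E(A)$; and the eigenvalues $z_i$ of $A$ as meromorphic functions become the eigenvalues as power series, so $N=\dim_\Q\langle z_1,\ldots,z_n\rangle$ is unchanged. Then \thref{asgln} applies verbatim and yields
\begin{center}
$tr.d._\C\C(A,E(A))\geq N+\rank J((x_{i,j})\,|\,\vec t)=N+\dim_\C X$,
\end{center}
which is exactly the claimed inequality. This is the same move by which \thref{aslast} was deduced from \thref{asupper} in Part I, so I would phrase the argument in parallel to that passage.

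I expect the only genuine subtlety to be the identification of the abstract functions $E_{i,j}((x_{s,t}))$ on $X$ with the entries of the true matrix exponential $E(A)$. This requires spelling out precisely what "the expected differential equations for $D_{i,j}$" means — namely that the matrix $\mathcal E=(E_{i,j})$ satisfies $D_{i,j}\mathcal E=\mathcal E\cdot D_{i,j}(A)$ (equivalently $D_{i,j}(A)\cdot\mathcal E$, the two agreeing because $A$ commutes with every $D_{i,j}(A)$ in the intended setup) together with $\mathcal E=I_n$ at the origin — and then invoking uniqueness of solutions of such a (compatible, integrable) system of linear PDEs over the formal or convergent power series ring. Once that identification is granted, everything else is bookkeeping: the Jacobian-rank reformulation of $\dim_\C X$ and the invariance of $N$ are immediate, and the inequality falls straight out of \thref{asgln}. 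I would therefore devote the bulk of the written proof to this identification step and treat the rest as routine, exactly as the analogous \thref{aslast} is handled.
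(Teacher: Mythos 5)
Your overall reduction is exactly the intended one: the paper gives no separate proof of this statement, treating it as the geometric restatement of \thref{asgln} obtained by precisely the dictionary you describe (expand the coordinate functions in local parameters at a point of $X$ over the origin, identify $\rank(D_{i,j}(x_{s,t}))$ with the generic Jacobian rank $\rank J((x_{i,j})\,|\,\vec t)=\dim_\C X$, and apply \thref{asgln}), just as \thref{aslast} stands to \thref{asupper} in Part I. The one concrete flaw sits at the step you yourself single out as the crux: the system you propose in order to characterize the exponential, $D_{i,j}\mathcal{E}=\mathcal{E}\cdot D_{i,j}(A)$ with $\mathcal{E}=I_n$ at the origin, is \emph{not} satisfied by $E(A)$ for a general family, because $A$ need not commute with $D_{i,j}(A)$; the correct equation is the Duhamel/dexp formula $D\,E(A)=\int_0^1 E(sA)\,(DA)\,E((1-s)A)\,ds$, which collapses to your equation only in the commuting case, so a uniqueness argument for the system as you wrote it would identify $\mathcal{E}$ with something other than $E(A)$, and the subsequent appeal to \thref{asgln} (which concerns the genuine exponential) would not go through as written. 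The repair is immediate and does not change your architecture: either take ``the expected differential equations'' to mean the correct dexp-type system, which together with the initial condition does characterize $E(A)$ over the power series ring, or, more simply, read $E_{i,j}((x_{s,t}))$ as the paper intends, namely as the composition of the entries of the entire map $E$ with the coordinate functions on $X$, in which case the identification is definitional and the remaining bookkeeping (vanishing constant terms at the origin, invariance of $N$, and $\dim_\C X=\rank J((x_{i,j})\,|\,\vec t)$) delivers the inequality directly from \thref{asgln}.
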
\begin{proof}
 We choose  $t_1,\ldots,t_l$ that are independent holomorphic coordinates on $X$ locally at $\vec{0}$ so that $\rank(J((g_{i,j};\vec{t}))=dim_\C(X)$. This reduces the proof to \thref{asgln}
\end{proof}

This version of the Ax-Schanuel result is the one most useful when extracting geometric corollaries, as we have already seen.

\section{The Weakly Special Subvarieties}

We return once more to $\mathfrak{gl}_n$ and proceed towards defining the weakly special subvarieties. The results we had so far lead us in a natural way to consider some specific subsets of $\mathfrak{gl}_n$. 

Consider a vector space $V$ over $\C$ with $\dim_\C V=n$, some fixed $k\in\{1,\ldots,n \}$, some fixed $m_i\in \N$ for $1\leq i\leq k$ such that $m_1+\ldots+m_k=n$ and $W_k(\vec{m})$ the algebraic variety over $\C$ we defined earlier.\\

\textbf{Relations on Eigenvalues}\\

We expect that the only algebraic relations that will be allowable on the eigenvalues will be $\mathbb{Q}-$linear relations. Since we have already accounted for the number of distinct eigenvalues we also require that these relations do not force any more eigevalues to be equal.

With that in mind, we let $\Sigma$ be a finite set of $\Q-$linear polynomials of the form $F(\vec{f})=\Sum{i=1}{k}q_if_i$ on the $f_i$, and let $I$ be the ideal generated by $\Sigma$ in $\C[f_1,\ldots,f_k]$. We also assume that $\nexists i,j$ such that $f_i-f_j\in I$, i.e. in $Z(\Sigma)$ none of the previously distinct $f_i$ coincide, where $Z(\Sigma)$ is the algebraic subvariety of $\mathbb{A}_{\C}^k$ defined by $\Sigma$. Finally, we also let \begin{center}
	$\Gamma_\Sigma=\Gamma_k\cap Z(\Sigma)\subset \mathbb{A}^k$,
\end{center}where $\Gamma_k$ is the Zariski open subset of $\mathbb{A}^k_\C$ defined earlier.\\

\textbf{Other Relations}\\

For the rest of the data of the matrix we allow any algebraic relation that does not depend on the eigenvalues. So we consider $W\subset W_k(\vec{m})$ to be a subvariety of $W_k(\vec{m})$. Then if we are given a set $\Sigma$ as above and a subvariety $W\subset W_k(\vec{m})$ we let
\begin{center}$X(k,\vec{m},\Sigma,W)=\Gamma_\Sigma\times W$.\end{center}

All of the above lead us naturally to the following definition.

\begin{defn}An irreducible subvariety $U\subset \mathfrak{gl}_n$ containing the origin will be called \textbf{weakly special} if there exist a natural number $1\leq k\leq n$, a vector $\vec{m}=(m_1,\ldots,m_k)\in\N^k$ such that $\Sum{i=1}{k}m_i=n$, a set $\Sigma$ of  $\Q-$linear polynomials, and a subvariety $W\subset W_k(\vec{m})$, all defined as above, such that \begin{center}
		$U=Zcl(\Phi_{k,\vec{m}}(X(k,\vec{m},\Sigma,W))$,
	\end{center} where $Zcl(R)$ denotes the Zariski closure in $\mathfrak{gl}_n$ of a subset $R\subset \mathfrak{gl}_n$.\end{defn}

\section{Ax-Lindemann for $GL_n$ and other corollaries}

We approach this in the same way as we did for the corresponding result in \ref{saxlindup}. We start with defining components in this setting. After that we prove a two-sorted version of \thref{asalt}, similar to \thref{atypint}, and then, just as in \ref{saxlindup}, use this to infer our Ax-Lindemann result.\\

\begin{defn} Let $W\subset \mathfrak{gl}_n$ and $V\subset GL_n$ be algebraic subvarieties. Then a \textbf{component} $C$ of $W\cap E^{-1}(V)$ will be a complex-analytically irreducible component of $W\cap E^{-1}(V)$.\end{defn}

\begin{theorem}[Two-sorted Weak Ax-Schanuel for $GL_n$]\thlabel{atypintgln}  Let $U\subset \mathfrak{gl}_n$ be a weakly special subvariety that contains the origin, let $X=E(U)$, and let $V\subset U$ and $Z\subset X$ be algebraic subvarieties, such that $\vec{0}\in V$ and $I_n\in Z$. If $C$ is a component of $V\cap E^{-1}(Z)$ with $\vec{0}\in C$, then, assuming that $C$ is not contained in any proper weakly special subvariety of $U$,
\begin{center}$\dim_\C C\leq \dim_\C V+\dim_\C Z-\dim_\C X$.
\end{center}\end{theorem}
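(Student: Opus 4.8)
The strategy mirrors the proof of \thref{atypint} in the upper triangular case, replacing the eigencoordinates $T_{i,j}$ with the data-coordinates $g_j$ on $W_k(\vec m)$ and using \thref{trdata}, \thref{trdataexp}, and \thref{asalt} in place of their $\mathfrak{h}_n$ counterparts. First I would pass from $C$ to a dense, complex-analytically irreducible subset $B\subset C$ on which the matrix $A$ has generically constant eigenvalue multiplicities $\vec m$ and $k$ distinct eigenvalues, so that the data $\{(z_i),(g_j)\}$ of a generic point of $B$ is well defined and the $z_i$, $g_j$ are regular (algebraic) functions on $B$; since $C$ is not contained in any proper weakly special subvariety of $U$, neither is $B$, and $\dim_\C B=\dim_\C C$. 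On $B$ we then have $z_i,e^{z_i},g_j$ as functions, with $E(A)$ governed by the data $\{(e^{z_i}),(h_j)\}$ where, by \thref{trdataexp} and the bialgebraicity claim in its proof, $\C(\{e^{z_i},h_j\})=\C(\{e^{z_i},g_j\})$.

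\textbf{The chain of inequalities.} I would set $m=tr.d._\C\C(A,E(A))$ and $l=tr.d._\C\C(\{g_j:1\le j\le w_{k,\vec m}\})$, and put $K=\C(\{g_j\})$. The input from Ax-Schanuel is \thref{asalt} (in its alternate formulation, which applies in the present analytic/derivation setting), giving
\begin{equation*}
\dim_\C B+N\le tr.d._\C\C(A,E(A))=m,
\end{equation*}
where $N=\dim_\Q\langle z_1,\dots,z_n\rangle$. From $V\subset \mathfrak{gl}_n$, $Z\subset GL_n$ and $B\subset V\cap E^{-1}(Z)$ I get $\dim_\C V\ge tr.d._\C\C(A)$ and $\dim_\C Z\ge tr.d._\C\C(E(A))$. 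By \thref{trdata} applied to $A$ and to $E(A)$,
\begin{equation*}
tr.d._\C\C(A)=tr.d._\C K+tr.d._K K(z_1,\dots,z_n),\qquad tr.d._\C\C(E(A))=tr.d._\C K+tr.d._K K(e^{z_1},\dots,e^{z_n}),
\end{equation*}
and here $tr.d._\C K=l$. The minimality of the weakly special $U$ containing $B$ forces that the only relations on the $z_i$ (resp. $e^{z_i}$) over $K$ are the $\Q$-linear ones defining $\Sigma$ together with the relations among the $g_j$, so that $m=l+tr.d._K K(\{e^{z_i},z_i:1\le i\le k\})$ and $tr.d._K K(\{z_i,e^{z_i}\})\le tr.d._K K(\{z_i\})+tr.d._K K(\{e^{z_i}\})$. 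Combining these yields $m\le tr.d._\C\C(A)+tr.d._\C\C(E(A))-l\le \dim_\C V+\dim_\C Z-l$. Finally, since $U=Zcl(\Phi_{k,\vec m}(X(k,\vec m,\Sigma,W)))$ with $\dim_\C U=\dim_\C X$ and the definition of weakly special gives $\dim_\C X=N+l$ (the $T_\Sigma$-part contributing $N$, the $W$-part contributing $l$ for the minimal choice), substituting $l=\dim_\C X-N$ and using $\dim_\C B+N\le m$ gives $\dim_\C C=\dim_\C B\le \dim_\C V+\dim_\C Z-\dim_\C X$.

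\textbf{Main obstacle.} The delicate point is the step using minimality of $U$ to conclude $\dim_\C X=N+l$ and the additivity/independence statement $m=l+tr.d._K K(\{z_i,e^{z_i}\})$ — i.e. that the ``eigenvalue'' relations and the ``data'' relations do not interfere. This requires that $B$ is not contained in a proper weakly special subvariety: if some extra algebraic relation linked the $z_i$ (or $e^{z_i}$) to the $g_j$, or if $tr.d._\Q\langle z_i\rangle$ were smaller than the rank predicted by $\Sigma$, then $B$ would lie in a smaller $X(k,\vec m,\Sigma',W')$, contradicting minimality; one must argue carefully, via \thref{trdata} and \thref{trdataexp}, that $\C(A,E(A))$ is finite over $\C(\{z_i,e^{z_i},g_j\})$ so that transcendence degrees transfer cleanly, and that the $\Q$-linear span of the $z_i$ has dimension exactly $N$ matching the codimension of $T_\Sigma$. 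Once this ``no interference'' bookkeeping is in place, the rest is the same arithmetic of transcendence degrees as in \thref{atypint}, and the Ax-Lindemann corollary for $\mathfrak{gl}_n$ follows exactly as \thref{axlind} followed from \thref{atypint}.
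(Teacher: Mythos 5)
Your proposal is correct and follows essentially the same route as the paper's proof: restrict to a dense subset $B\subset C$ where the data $(z_i,g_j)$ are well-defined functions, apply the alternate formulation \thref{asalt}, bound $\dim_\C V$ and $\dim_\C Z$ via \thref{trdata} and \thref{trdataexp}, and use minimality of $U$ to get the additivity of transcendence degrees over $K=\C(\{g_j\})$ together with $\dim_\C X=N+l$. The only minor point the paper makes explicit that you gloss over is that $B$ is obtained by also removing the locus $z_i-z_j\in 2\pi i\Z$, so that $E(A)$ retains exactly $k$ distinct eigenvalues with the same multiplicities.
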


\begin{proof} Let $U=Zcl(\Phi_{k,\vec{m}}(X(k,\vec{m},\Sigma,W)))$, for brevity we let $B_k:=X(k,\vec{m},\Sigma,W)$, and $B=\Phi_{k,\vec{m}}(B_k)\cap C$. We will have $\dim_\C B=\dim_\C C$ and on $B$ the $g_{i,j}$ are well defined meromorphic functions. We let $K$ be the algebraic closure of the field $\C((g_{i,j}))$. Let $f_i$ and $T_j$ denote the coordinates of the point in $T_\Sigma(K)$ and $W\subset W_k(\vec{m})(K)$ respectively with $\Phi_{k,\vec{m}}((\vec{f},\vec{T}))=g_{i,j}$. We also let $N=\dim_\Q\langle f_1,\ldots,f_k\rangle_{\Q}$.

Then we may use \thref{asalt} to deduce that 
\begin{equation}\label{eq:last1}
tr.d._\C \C(A,E(A))\geq N+\dim_\C C.
\end{equation}On the other hand, we have the following inequalities:
\begin{equation*} \dim_\C Z\geq tr.d._\C \C(E(A)), \text{ and} \end{equation*}
\begin{equation*} \dim_\C V\geq tr.d._\C \C(A).  \end{equation*}Combining these with \thref{trdata} and \thref{trdataexp} we conclude that
\begin{equation}\label{eq:dimz2} \dim_\C Z\geq tr.d._\C \C(\{e^{f_i},T_j: 1\leq i\leq k, 1\leq j\leq w_{k,\vec{m}}\}), \text{ and} \end{equation}
\begin{equation}\label{eq:dimv2} \dim_\C V\geq tr.d._\C \C(\{f_i,T_j: 1\leq i\leq k, 1\leq j\leq w_{k,\vec{m}}\}).  \end{equation}

On the other hand, if we set $L=\C(\{T_j :1 \leq j\leq w_{k,\vec{m}} \})$, and let $M=tr.d._\C L$, we get that 
\begin{equation}\label{eq:dimx2}
\dim_\C X =N+M.
\end{equation}

By the minimality of $U$, in containing $C$, and hence $B$, we also get that
\begin{equation}\begin{split}
tr.d._\C \C(\{f_i,T_j: 1\leq i\leq k, 1\leq j\leq w_{k,\vec{m}}\})=& \\
=M+tr.d._L L(\{f_i:1\leq i\leq k\}),&
\end{split}
	\end{equation}
\begin{equation}\begin{split}tr.d._\C \C(\{e^{f_i},T_j: 1\leq i\leq k, 1\leq j\leq w_{k,\vec{m}}\})=& \\
=M+tr.d._L L(\{e^{f_i}:1\leq i\leq k\}), \text{ and    }&
\end{split}\end{equation}\begin{equation}
tr.d._\C \C(A,E(A))= M+tr.d._L L(\{f_i,e^{f_i}:1\leq i\leq k\}).
\end{equation}
Combining these with \eqref{eq:dimz2} and \eqref{eq:dimv2} we get that 
\begin{center}
	$\dim_\C V\geq M+tr.d._L L(\{f_i:1\leq i\leq k\})$, and
\end{center} \begin{center}
$\dim_\C Z\geq  M+tr.d._L L(\{e^{f_i}:1\leq i\leq k\})$.
\end{center}

The rest of the proof follows similarly to that of \thref{atypint}.
\end{proof} 

As we did in \ref{saxlindup}, we conclude with the characterization of bi-algebraic sets that contain the origin.
 
\begin{cor}[Ax-Lindemann for $GL_n$]\thlabel{axlindgln}Let $Z\subset GL_n$ be an algebraic subvariety with $I_n\in Z$. If $V\subset E^{-1}(Z)$ is a maximal irreducible subvariety that contains the origin, then $V$ is a weakly special subvariety.\end{cor}
\begin{proof}Let $U$ be the minimal weakly special subvariety that contains $V$, $X=E(U)$ and let $Z'=Z\cap E(U)$. We use \thref{atypintgln} for $C=V$ to get \begin{center}
		$\dim_\C V\leq \dim_\C{V}+\dim_\C Z'-\dim_\C X$.\end{center}This implies $\dim_\C X\leq \dim_\C Z'$, and since $Z'\subset X$ we get that $X\subset Z$ and that $V\subset U\subset E^{-1}(Z)$. Maximality of $V$ then implies that $V=U$ is weakly special.   \end{proof}
	
\begin{rmks}1. We note that the above results imply, as a direct corollary, Weak Ax-Schanuel and therefore also Ax-Lindemann results for all linear algebraic groups.\\

2. In mimicking the classical Ax-Schanuel statement, we can extract Weak Ax-Schanuel and Ax-Lindemann type statements for Cartesian powers of the exponential map of a Lie algebra from our results.

Even more generally, we can infer such results for the Cartesian products of exponentials $E_i:\mathfrak{g}_i\rightarrow G_i$ of Lie algebras $\mathfrak{g}_i$, $1\leq i\leq r$. We achieve this by noticing that the exponential of the Lie algebra $\mathfrak{g}=\mathfrak{g}_1\times\ldots\times \mathfrak{g}_r$ is the Cartesian product of the $E_i$. \end{rmks}

\section{Applications and Further Questions}\label{applications}

Ax-Schanuel and Ax-Lindemann type results, proven in other settings, play a major role in the proof of problems of unlikely intersections such as Andr\'e-Oort and Zilber-Pink type statements. In this last section we discuss possible applications and extensions of our results. Mainly we propose several questions that we believe will ultimately lead to unlikely intersections statements similar in principle to those studied for other spaces.\\

\subsection*{Towards the Full Ax-Schanuel Conjecture}

The main obstacle in obtaining the Full Ax-Schanuel Conjecture for the maps considered in here seems to stem from the fact that the preimage $E^{-1}(I_n)$ of $I_n$ is no longer a discrete subset of the corresponding Lie algebra, whether that is $\mathfrak{h}_n$ or $\mathfrak{gl}_n$. In both cases it is a countable union of connected, possibly higher dimensional subsets of the Lie algebra.

We note that our method in fact is able to classify a wider class of bi-algebraic sets. In fact it is not hard to see from our exposition that in the case of the algebra $\mathfrak{h}_n$ of upper triangular matrices we can describe all bi-algebraic sets $W$ that satisfy the following condition:\begin{center}
	($C_1$) There is no $k\in\Z\backslash\{0\}$ and no pair $1\leq i<j\leq n$ such that $W\subset Z(f_i-f_j-2k\pi i)$.
\end{center}

Similarly, for $\mathfrak{gl}_n$ following our exposition one may classify all bi-algebraic subvarities $W$ satisfying the condition:\begin{center}
	($C_2$) There is no $k\in\Z\backslash\{0\}$ such that generically on $W$ there are eigenvalues that differ by $2k\pi i$.
\end{center}

This inability to classify all bi-algebraic subsets of the Lie algebra $\mathfrak{h}_n$, respectively of $\mathfrak{gl}_n$, is the reason why we have avoided giving a definition of ``weakly special" subvarieties of $U_n$, or respectively of $GL_n$. 

We believe the first question to be considered here should be to describe the rest of the bi-algebraic subsets of the Lie algebra of upper triangular matrices $\mathfrak{h}_n$. The reason for that is two-fold. First and foremost, the case of upper triangular matrices offers examples more easily accessible from a computational standpoint. Secondly, it offers the exact same limitations as that of the algebra $\mathfrak{gl}_n$, as seen evidently from our exposition. Namely the set $E^{-1}(I_n)$ has positive dimensional connected components.

Let us call the bi-algebraic sets of $\mathfrak{h}_n$, or respectively of $\mathfrak{gl}_n$, satisfying the above condition $C_1$, or $C_2$ respectively, ``weakly special of type I". We call the rest ``weakly special of type II". Then we believe that the following holds
\begin{conj}Let $\mathfrak{g}$ be either one of the above algebras and let $V\subset\mathfrak{g}$ be a weakly special subvariety of type II. Then there exists a weakly special subvariety of type I $W\subset \mathfrak{g}$, of possibly smaller dimension than $V$, such that $E(W)=E(V)$.
\end{conj}

Assuming the validity of this conjecture, it would be natural to define the weakly special subvarieties of $GL_n$ as the images of what we call ``weakly special subvarieties of type I", i.e. the weakly special subvarieties that naturally appear in our exposition.

\subsection*{Towards a Zilber-Pink Conjecture}

The first step towards formulating unlikely intersections problems such as an Andr\'e-Oort and a Zilber-Pink type statement for linear algebraic groups would have to be the correct definition of special subvarieties of $GL_n$ starting from dimension $0$, i.e. the special points. The difficulty of this definition has already been noted in \cite{zannier}, see in particular Remark $1.1.5$ and $\S 1.3.5$.

Once again, at least in defining the special subvarities, we believe the starting point should be defining the special subvarieties of $U_n$, the group of invertible upper triangular matrices. The reasons for this are the same as those noted above.

\begin{appendices}
\renewcommand{\thecor}{\Alph{section}.\arabic{cor}}
\renewcommand{\thetheorem}{\Alph{section}.\arabic{theorem}}
	
\section{The Full Ax-Schanuel Theorem in families}
	
In this appendix, following the argument in \cite{tsimax}, we prove the ``Full Ax-Schanuel" analog of \thref{asmainred}. As a consequence we also obtain a slightly more general result that could be dubbed ``Full Ax-Schanuel in affine families". We believe the results of this section are known to experts in the field, however since we couldn't find a reference for them, and we expect that they will play a role in subsequent progress towards a ``Full Ax-Schanuel for $GL_n$", we include them in this appendix.\\

 We consider the uniformizing map $\pi_k :\mathbb{C}^n\times \mathbb{C}^k\rightarrow (\mathbb{C}^{\times})^n$, which is given by \begin{center}$(x_1,\ldots,x_n,y_1,\ldots,y_k)\mapsto (e^{x_1},\ldots,e^{x_n})$.\end{center}

We define $D_k=\Gamma(\pi_k)$, i.e. as a subset of $\mathbb{C}^n\times \mathbb{C}^k\times(\mathbb{C}^{\times})^n$ 
\begin{center}$D_k=\{(\vec{u},\vec{v}): \pi_k(\vec{u})=\vec{v}\}$.\end{center}  Furthermore, let $\pi_a$ be the projection on the first $n$ coordinates of the space $\mathbb{C}^n\times \mathbb{C}^k\times(\C^{\times})^n$, and $\pi_m$ be the projection on the last $n$ coordinates of the same space. 

Following the proof of the Full Ax-Schanuel Theorem in \cite{tsimax} we prove:
\begin{theorem}[Full Ax-Schanuel in families]\thlabel{ver2} Let $V\subset \C^n\times \C^k\times(\C^{\times})^n$ be an irreducible algebraic subvariety, and $U$ a connected complex-analytic irreducible component of $V\cap D_k$. Assuming that $\pi_m(U)$ is not contained in the coset of a proper subtorus of $(\mathbb{C}^{\times})^n$, then 
	\begin{center}$\dim_\mathbb{C} V\geq n+\dim_\mathbb{C}U$.\end{center}\end{theorem}
\begin{proof}We employ induction on $k\geq 0$. For $k=0$ this is a consequence of the Ax-Schanuel Theorem\footnote{See Theorem 1.3 in \cite{tsimax}.}.
	
	Assume that $k\geq 1$ and that the result holds for $k-1$. Then we consider the projection 
	\begin{center}
		$p_0: \C^n\times \C^k\times(\C^{\times})^n\rightarrow \C$,\end{center}
	of our space to the $(n+k)-$th coordinate, i.e.
	\begin{center}$p_0(x_1,\ldots,x_n,y_1,\ldots,y_k,z_1,\ldots,z_n)=y_k$.	\end{center}
	Let also $V_0=p_0(V)$ and, for $y\in V_0$, we consider the fibre $V_y$ of $V$ over $y$. Similarly we consider the corresponding fibre $U_y$ of $U$ over $y$. With this notation we get 
	$V=\underset{y\in V_0}{\bigcup}\{y\} \times V_y$.
	
	Since $V$ is irreducible, if $\dim(V_0)=0$ then $V_0=\{y_0\}$ will be a single point. This implies that $V=V'\times \{y_0\}$ is isomorphic to an irreducible algebraic subvariety $V'\subset\C^n\times \C^{k-1}\times(\C^{\times})^n$. In this case, $U\subset V\cap D_{k}$ is isomorphic to a connected complex-analytic irreducible component of $V'\cap D_{k-1}$ and the result follows by induction.
	
	We may therefore assume that $\dim V_0=1$. This tells us that $V_0$ contains a non-empty affine open subset of $\C$ and that for $y\in V_0$ generic we get \begin{center}
		$\dim V=\dim V_y+1$.	\end{center}
	
	The rest of the proof comprises of considering the only two possible cases for the generic behaviour of the fibres $U_y$.\\

	\textbf{First Case:} Suppose that $\pi_a(U_y)$ is generically\footnote{Generically here refers to $y\in V_0$.} not contained in the translate of a proper $\Q-$linear subspace of $\C^n$. 
	
	We have that $V_y \subset\C^n\times \C^{k-1}\times(\C^{\times})^n$ is an irreducible algebraic subvariety, and $U_y$ is a connected complex-analytic irreducible component of $V_y\cap D_{k-1}$. Therefore by the previous assumption and the inductive hypothesis we get that for $y\in V_0$ generic 	
	\begin{center}
		$\dim V_y\geq n+\dim U_y$.
	\end{center}This in turn implies that $\dim V\geq n+(1+\dim U_y)$ and, since $1+\dim U_y\geq \dim U$, the result follows.\\
	
	\textbf{Second Case:} If the assumption of the previous case does not hold, then for $y\in V_0$ chosen generically, $\pi_a(U_y)\subset \C^n$ will be contained in the translate of some proper $\Q-$linear subspace of $\C^n$. In other words $U_y\subset Z(f_y)$, where $f_y=c(y)+\Sum{i=1}{n}q_i(y)x_i\in\C[x_1,\ldots,x_n]$, is a linear polynomial with the coefficients $q_i(y)\in\Q$ and $c(y)\in \C$ depending on $y$.\footnote{Here $Z(f_y)=\{(x_1,\ldots,x_n): f_y(x_1,\ldots,x_n)=0 \}$, is just the set of solutions of $f_y=0$ in $\C^n$.}
	
	At this point we consider another projection, namely we let \begin{center}
		$p_1:\C^n\times \C^{k}\times(\C^{\times})^n\rightarrow\C^n\times \C^{k-1}\times(\C^{\times})^n$
	\end{center} be the projection given by \begin{center}$p_1(x_1,\ldots,x_n,y_1,\ldots,y_k,z_1,\ldots,z_n)=(x_1,\ldots,x_n,y_1,\ldots,y_{k-1},z_1,\ldots,z_n)$.\end{center}
	We also let $V_1=p_1(V)$, $U_1=p_1(U)$, $V_1'=Zcl(V_1)$, the Zariski closure of $V_1$, and $U_1'=\bar{U_1}$, the closure of $U_1$ with respect to the standard topology on $\C^n\times  \C^{k-1}\times (\C^\times)^n$.
	
	For these new sets we get that $V_1'$ is an irreducible subvariety of $\C^n\times C^{k-1}\times (\C^\times)^n$ and $U_1'$ is a connected irreducible complex-analytic component of $V_1'\cap D_{k-1}$. We also get that $\pi_a(U)=\pi_a(U_1)$ and hence, by the initial assumption on $U$, $\pi_a(U_1')$ is not contained in the translate of a $\Q-$linear subspace of $\C^n$. Therefore we may apply the inductive hypothesis to get 
	\begin{center}$\dim V_1'\geq n +\dim U_1'$.\end{center}
	
	From the preceding discussion we get that $\dim V\geq \dim V_1=\dim V_1'$. On the other hand, since, by assumption, $\pi_a(U)$ is not contained in the translate of a $\Q-$linear subspace of $\C^n$ then the $Z(f_y)$, and hence the $f_y$, will vary with $y$. This in turn implies\footnote{The coordinate function $y_{k}$ restricted to $U$ will depend on the rest of the coordinates of $U$.} that $\dim U=\dim U_1=\dim U_1'$.
	
	Combining all of the above we reach the conclusion.
\end{proof} 

By the same arguments as in \cite{tsimax}, the above theorem implies the following
\begin{cor} Let $D_k$ and $\pi_k$ be as above and $U\subset D_k$ be an irreducible complex analytic subspace such that $\pi_m(U)$ is not contained in a coset of a proper subtorus of $(\mathbb{C}^{\times})^n$. Then 
	\begin{center}$\dim Zcl(U)\geq n+\dim_\mathbb{C}U$.\end{center}\end{cor}

\subsection{A generalization-Affine families}

As a corollary of the above proof we are able to extract Ax-Schanuel results for a larger family of spaces. The idea is that we are able to replace $\C^k$ by a random affine variety. We approach this in a geometric setting similar to the previous subsection.

Let $W$ be an affine variety over $\C$ and let $\pi_n:\C^n\rightarrow (\C^{\times})^n$ be the map given by \begin{center}$(x_1,\ldots,x_n)\mapsto(e^{x_1},\ldots,e^{x_n})$.\end{center}
We consider the uniformizing map $\pi_n\times id_W :\mathbb{C}^n\times W\rightarrow (\mathbb{C}^{\times})^n\times W$, the product of $\pi_n$ and the identity morphism $id_W$ of $W$. Let also $p_1: (\mathbb{C}^{\times})^n\times W\rightarrow (\mathbb{C}^{\times})^n$ be the projection on $(\mathbb{C}^{\times})^n$ and let $\phi:\mathbb{C}^n\times W\rightarrow (\mathbb{C}^{\times})^n $ be its composition with $\pi_n\times id_W$.

We also define $D_k(W)=\Gamma(\phi)$, i.e. as a subset of 
$\mathbb{C}^n\times W\times(\mathbb{C}^{\times})^n$ 
\begin{center}$D_k(W)=\{(\vec{u},\vec{v}): \phi(\vec{u})=\vec{v}\}$.\end{center}

\begin{cor}[Full Ax-Schanuel in Affine families]\thlabel{corvar1} Let $V\subset \C^n\times W\times(\C^{\times})^n$ be an irreducible algebraic subvariety, and $U$ a connected complex-analytic irreducible component of $V\cap D_k(W)$. Assuming that $\pi_m(U)$ is not contained in the coset of a proper subtorus of $(\mathbb{C}^{\times})^n$, then 
	\begin{center}$\dim_\mathbb{C} V\geq n+\dim_\mathbb{C}U$.\end{center}\end{cor}
\begin{proof}By Noether's Normalization Lemma there exists a finite surjective morphism $f: W\rightarrow \mathbb{A}_\C^d$ where $d=\dim W$. The product of this morphism with the identity of $\C^n\times(\C^{\times})^n$ in turn gives a finite morphism \begin{center}
		$F:\C^n\times W\times (\C^{\times})^n\rightarrow \C^n\times \mathbb{A}_\C^d\times (\C^{\times})^n$.
	\end{center}Indeed a morphism of affine varieties is finite if and only if it is proper\footnote{See Exercises II.4.1 and II.4.6 in \cite{hhorne}.}, since $F$ is proper as the product of two such morphisms it will also be finite. The image of the irreducible subvariety $V$  under this map will be an irreducible subvariety $V'$ of $\C^n\times \mathbb{A}_\C^d\times (\C^{\times})^n$, since finite morphisms are closed.
	
	We also note that $F$ maps the set $D_k(W)$ to the set $D_k(\mathbb{A}_\C^d)$. So that the closure $U'=cl(F(U))$ of the image of $U$ with respect to the Euclidean topology will be a component of $V'\cap D_k(\mathbb{A}_\C^d)$. 
	
	Since $F$ is finite we get that $\dim_\C U=\dim_\C U'$, $\dim_\C V=\dim_\C V'$, and by the construction of $F$ it follows that 
	$\pi_m(U')$ is not contained in the coset of a proper subtorus of $(\mathbb{C}^{\times})^n$, since this is true for $\pi_m(U)$. Therefore the result follows from \thref{ver2}.\end{proof}

\end{appendices}

\bibliographystyle{alpha}

\bibliography{ms}

\begin{thebibliography}{MPT19}

\bibitem[Ax71]{ax1971schanuel}
J.~Ax.
\newblock On {S}chanuel's conjectures.
\newblock {\em Ann. of Math. (2)}, 93:252--268, 1971.

\bibitem[Ax72]{MR0435088}
J.~Ax.
\newblock Some topics in differential algebraic geometry. {I}. {A}nalytic
  subgroups of algebraic groups.
\newblock {\em Amer. J. Math.}, 94:1195--1204, 1972.

\bibitem[BT17]{bakker2017ax}
B.~Bakker and J.~Tsimerman.
\newblock The ax--schanuel conjecture for variations of hodge structures.
\newblock {\em Inventiones mathematicae}, pages 1--18, 2017.

\bibitem[Gao18]{gao2018mixed}
Z.~Gao.
\newblock Mixed ax-schanuel for the universal abelian varieties and some
  applications.
\newblock {\em arXiv preprint arXiv:1806.01408}, 2018.

\bibitem[Har77]{hhorne}
R.~Hartshorne.
\newblock {\em Algebraic geometry}.
\newblock Springer-Verlag, New York-Heidelberg, 1977.
\newblock Graduate Texts in Mathematics, No. 52.

\bibitem[Kir09]{kirby2009theory}
J.~Kirby.
\newblock The theory of the exponential differential equations of semiabelian
  varieties.
\newblock {\em Selecta Math. (N.S.)}, 15(3):445--486, 2009.

\bibitem[KUY16]{klingler2013hyperbolic}
B.~Klingler, E.~Ullmo, and A.~Yafaev.
\newblock The hyperbolic {A}x-{L}indemann-{W}eierstrass conjecture.
\newblock {\em Publ. Math. Inst. Hautes \'{E}tudes Sci.}, 123:333--360, 2016.

\bibitem[MPT19]{mok2017ax}
N.~Mok, J.~Pila, and J.~Tsimerman.
\newblock Ax-schanuel for shimura varieties.
\newblock {\em Ann. of Math.}, 189(3):945--978, 2019.

\bibitem[Mum08]{mumfordav}
D.~Mumford.
\newblock {\em Abelian varieties}, volume~5 of {\em Tata Institute of
  Fundamental Research Studies in Mathematics}.
\newblock Published for the Tata Institute of Fundamental Research, Bombay; by
  Hindustan Book Agency, New Delhi, 2008.
\newblock With appendices by C. P. Ramanujam and Y. Manin, Corrected reprint of
  the second (1974) edition.

\bibitem[Pil11]{pilaAO}
J.~Pila.
\newblock O-minimality and the {A}ndr\'{e}-{O}ort conjecture for {$\Bbb C^n$}.
\newblock {\em Ann. of Math. (2)}, 173(3):1779--1840, 2011.

\bibitem[Pil15]{pilafun}
J.~Pila.
\newblock Functional transcendence via o-minimality.
\newblock In {\em O-minimality and diophantine geometry}, volume 421 of {\em
  London Math. Soc. Lecture Note Ser.}, pages 66--99. Cambridge Univ. Press,
  Cambridge, 2015.

\bibitem[PT16]{pila2016ax}
J.~Pila and J.~Tsimerman.
\newblock Ax-{S}chanuel for the {$j$}-function.
\newblock {\em Duke Math. J.}, 165(13):2587--2605, 2016.

\bibitem[Tsi15]{tsimax}
J.~Tsimerman.
\newblock Ax-{S}chanuel and o-minimality.
\newblock In {\em O-minimality and diophantine geometry}, volume 421 of {\em
  London Math. Soc. Lecture Note Ser.}, pages 216--221. Cambridge Univ. Press,
  Cambridge, 2015.

\bibitem[ZM12]{zannier}
U.~Zannier and D.W. Masser.
\newblock {\em Some Problems of Unlikely Intersections in Arithmetic and
  Geometry}.
\newblock Annals of mathematics studies. Princeton University Press, 2012.

\end{thebibliography}

\end{document}